\newtheorem{theorem}{Theorem} 
\newtheorem*{theorem*}{Theorem}
\newtheorem{lemma}[theorem]{Lemma}
\newtheorem{definition}[theorem]{Definition}
\newtheorem{proposition}[theorem]{Proposition}
\newtheorem{conjecture}[theorem]{Conjecture}
\newtheorem{corollary}[theorem]{Corollary}
\theoremstyle{remark}
\newcommand{\D}{\mathcal{D}}
\newcommand{\ii}{{\mathbf i}}
\newcommand{\eps}{\varepsilon}
\numberwithin{equation}{section} \numberwithin{theorem}{section}
\newcommand{\hG}{\widehat G}
\newcommand{\CG}{\mathcal S}
\newcommand{\mes}{\mathbf m}
\renewcommand{\H}{H}
\title[Lie groups and quantized free convolution]
{Representations of classical Lie groups and quantized free convolution}
\author{Alexey Bufetov}
\address[Alexey Bufetov]{Department of Mathematics, Higher School of Economics, Moscow, Russia, and Institute for
Information Transmission Problems of Russian Academy of Sciences, Moscow, Russia. E-mail: alexey.bufetov@gmail.com}
\author{Vadim Gorin}
\address[Vadim Gorin]{Department of Mathematics, Massachusetts Institute of Technology, Cambridge, MA, USA, and
 Institute for Information Transmission Problems of Russian Academy of Sciences, Moscow, Russia. E-mail: vadicgor@gmail.com}
\begin{document}

\maketitle

\begin{abstract}
We study the decompositions into irreducible components of tensor products and restrictions of
irreducible representations for all series of classical Lie groups as the rank of the group goes
to infinity. We prove the Law of Large Numbers for the random counting measures describing the
decomposition. This leads to two operations on measures which are deformations of the notions of
the free convolution and the free projection. We further prove that if one replaces counting
measures with others coming from the work of Perelomov and Popov on the higher order Casimir
operators for classical groups, then the operations on the measures turn into the free convolution
and projection themselves. We also explain the relation between our results and limit shape
theorems for uniformly random lozenge tilings with and without axial symmetry.
\end{abstract}

\section{Introduction}

\subsection{Summary}

We start by stating one of the results of the present article. Let $U(N)$ denote the (compact Lie)
group of all $N\times N$ complex unitary matrices. Due to E.~Cartan and H.~Weyl (see e.g.\
\cite{Weyl-book}) all irreducible representations of $U(N)$ are parameterized by their highest
weights, which are \emph{signatures}
--- $N$--tuples of integers $\lambda_1\ge\lambda_2\ge\dots\ge\lambda_N$. We denote by
$\widehat U(N)$ the set of all signatures and by $\pi^\lambda$ the representation corresponding to
the signature $\lambda$.

One way to encode a signature $\lambda$ is through the \emph{counting measure} $m[\lambda]$
corresponding to it via
\begin{equation}
\label{eq_def_uniform_intro}
 m[\lambda]=\frac{1}{N}\sum_{i=1}^N \delta\left(\frac{\lambda_i+N-i}N\right).
\end{equation}
Clearly, $m[\lambda]$ is a discrete probability measure on $\mathbb R$. This procedure is
illustrated in Figure \ref{Figure_mes}.

\begin{figure}[h]
\begin{center}
 {\scalebox{0.9}{\includegraphics{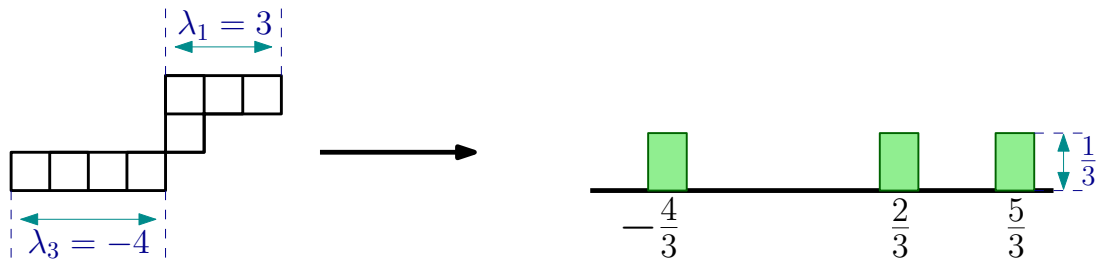}}}
 \caption{Counting measure corresponding to  signature $3\ge1\ge -4$
 \label{Figure_mes}}
\end{center}
\end{figure}

Given a finite-dimensional representation $\pi$ of $U(N)$ we can decompose it into irreducible
components:
\begin{equation}
\label{eq_decomp_irrep}
 \pi = \bigoplus_{\lambda\in \widehat U(N)} c_\lambda \pi^{\lambda},
\end{equation}
where non-negative integers $c_\lambda$ are multiplicities. The decomposition
\eqref{eq_decomp_irrep} can be identified with a probability measure $\rho^\pi$ on $\widehat U(N)$
such that
\begin{equation}
\label{eq_rep_measure}
 \rho^\pi(\lambda)= \frac{c_\lambda \dim(\pi^{\lambda})}{\dim(\pi)},
\end{equation}
where $\dim(\pi)$ is the dimension of $\pi$. In other words, $\rho^\pi$ weights a signature
according to the relative size of its isotypical component in $\pi$. The pushforward of $\rho^\pi$
with respect to the map $\lambda\to m[\lambda]$ is a \emph{random} probability measure on $\mathbb
R$ that we denote $m[\rho^\pi]$.

One of the main results of the present article is the Law of Large Numbers for $m[\rho^\pi]$, when
$\pi$ is a \emph{tensor product} of two irreducible representations of $U(N)$ and $N$ is large.
Since the decomposition of a tensor product into irreducible components is given by the classical
\emph{Littlewood--Richardson rule} (cf.\ \cite{LR}, \cite[Chapter I, Section 9]{M}), one can say
that we study the asymptotics of the Littlewood--Richardson coefficients.

\begin{theorem}
\label{Theorem_main_intro}
 Suppose that $\lambda^1(N),\lambda^2(N)\in\widehat U(N)$, $N=1,2,\dots$, are $2$ sequences of
 signatures which satisfy a technical assumption of Definition \ref{definition_regularity} and
 such that
 $$
  \lim_{N\to\infty} m[\lambda^i(N)]=\mes^i,\text{ (weak convergence), } i=1,2.
 $$
 Let $\pi(N)=\pi^{\lambda^1(N)}\otimes \pi^{\lambda^2(N)}$. Then as $N\to\infty$ random measures $m[\rho^{\pi(N)}]$ converge in the sense of
moments, in probability to a deterministic measure which we denote $\mes^1 \otimes \mes^2$.
\end{theorem}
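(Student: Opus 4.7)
The natural approach is the \emph{Schur generating function} method. For any probability measure $\rho$ on $\widehat U(N)$, one defines
\[
 S_\rho(x_1,\dots,x_N) \;=\; \sum_{\lambda\in\widehat U(N)} \rho(\lambda)\,\frac{s_\lambda(x_1,\dots,x_N)}{s_\lambda(1,\dots,1)},
\]
where $s_\lambda$ is the Schur polynomial, so that the normalized character of a $U(N)$-representation $\pi$ coincides with $S_{\rho^\pi}$ on the maximal torus. Since the character of a tensor product factorizes, one immediately obtains the crucial factorization
\[
 S_{\rho^{\pi^{\lambda^1(N)}\otimes \pi^{\lambda^2(N)}}}(x_1,\dots,x_N) \;=\; \frac{s_{\lambda^1(N)}(x_1,\dots,x_N)}{s_{\lambda^1(N)}(1,\dots,1)}\cdot \frac{s_{\lambda^2(N)}(x_1,\dots,x_N)}{s_{\lambda^2(N)}(1,\dots,1)}.
\]
This already suggests that the limiting operation $\mes^1\otimes\mes^2$ should be expressible purely through some transform of each factor.

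My plan is to extract moments of $m[\rho^\pi]$ from derivatives of $\log S_{\rho^\pi}$ at $(1,\dots,1)$. Concretely, one proves a general identity stating that $\tfrac{1}{N}\partial^k_{x_1}\!\bigl(\log S_\rho\bigr)\bigl|_{x=(1,\dots,1)}$ can be written as a linear combination of the moments $\int u^j\,dm[\lambda]$ averaged against $\rho$, plus finite correction terms that are negligible at scale $N\to\infty$. Mixed partials in $x_1,\dots,x_r$ give joint moments and, importantly, \emph{covariances} that are $O(1/N)$ smaller than expectations; this produces the concentration (convergence in probability) part of the statement, exactly as in the standard LLN-via-generating-functions paradigm.

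The main analytic input, and the step that I expect to be the principal obstacle, is a sharp asymptotic formula of the form
\[
 \frac{1}{N}\,\log \frac{s_{\lambda^i(N)}\!\left(e^{y_1/N},\dots,e^{y_r/N},1,\dots,1\right)}{s_{\lambda^i(N)}(1,\dots,1)} \;\xrightarrow[N\to\infty]{}\; \Phi_{\mes^i}(y_1,\dots,y_r),
\]
for some functional $\Phi_{\mes^i}$ of the limit measure $\mes^i$ determined by its moments. One derives this from the Weyl character formula combined with the determinantal (Jacobi--Trudi or dual Cauchy) representation of Schur functions, and controls remainders using the regularity condition of Definition~\ref{definition_regularity}. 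Here the difficulty is uniformity: the ratio must be analyzed not only on the diagonal but for small perturbations of several variables, and the error terms need to be polynomial in the derivative order to pass to all moments.

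Once this asymptotic is established, the proof concludes quickly: the factorization of the Schur generating function turns into additivity of $\Phi_{\mes^1}$ and $\Phi_{\mes^2}$ in the limit, which uniquely determines the moments of $\mes^1\otimes\mes^2$ via the derivative-to-moment dictionary of the first step. Vanishing of covariances, derived from the same asymptotic applied to joint derivatives, upgrades convergence in expectation to convergence in probability, completing the LLN.
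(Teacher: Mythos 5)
Your overall strategy is the one the paper actually uses: the character (Schur) generating function, its factorization for tensor products, an asymptotic formula for normalized characters with finitely many variables off $1$, moment extraction by differentiation at $(1,\dots,1)$, and concentration coming from the additive splitting of the normalized logarithm. However, your key analytic lemma is mis-scaled in a way that makes it vacuous. You propose to prove
$\frac{1}{N}\log\bigl(s_{\lambda(N)}(e^{y_1/N},\dots,e^{y_r/N},1^{N-r})/s_{\lambda(N)}(1^N)\bigr)\to\Phi_{\mes}(y_1,\dots,y_r)$.
In the linear regime of Definition \ref{definition_regularity} the correct statement (Theorem \ref{Theorem_character_asymptotics}, going back to Guionnet--Ma\"ida and Gorin--Panova) is that $\frac{1}{N}\log\bigl(s_{\lambda(N)}(u_1,\dots,u_r,1^{N-r})/s_{\lambda(N)}(1^N)\bigr)\to \H_\mes(u_1)+\dots+\H_\mes(u_r)$ uniformly for $u_i$ in a \emph{fixed} complex neighborhood of $1$, with $\H_\mes(1)=0$. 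If you substitute $u_i=e^{y_i/N}\to 1$, the left-hand side of your limit behaves like $\H_\mes(e^{y_1/N})+\dots=\Theta(1/N)$ and tends to $0$; the limiting functional $\Phi_\mes$ is identically zero and retains no information beyond the first moment. To capture the $k$th moment of $m[\rho^{\pi(N)}]$, which lives at scale $N^{k+1}$ in the $k$th-order derivative data of the generating function, you must keep the variables at fixed distance from $1$ and use uniformity of the convergence of analytic functions to pass to derivatives.

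A second, smaller gap: you assert that $\frac1N\partial_{x_1}^k(\log S_\rho)$ at $(1,\dots,1)$ equals a combination of the moments $\int x^j\,m[\lambda](dx)$ averaged over $\rho$ up to negligible corrections. The moments of $m[\lambda]$ are the \emph{shifted} power sums $\frac{1}{N}\sum_i\bigl(\tfrac{\lambda_i+N-i}{N}\bigr)^k$, and plain partial derivatives of $S_\rho$ do not produce them directly. The paper circumvents this by using the Vandermonde-conjugated operators $\D^{U(N)}_k=\frac{1}{V}\circ\bigl(\sum_i(u_i\partial_{u_i})^k\bigr)\circ V$, which are \emph{exactly} diagonalized on normalized characters with eigenvalue $\sum_i(\lambda_i+N-i)^k$ (Proposition \ref{Proposition_eigenfunctions}), so that $\E\bigl(\int x^k m[\rho](dx)\bigr)^m=N^{-m(k+1)}(\D^{U(N)}_k)^m\CG_\rho\bigl|_{u=1}$ with no error term; the asymptotics is then controlled by a Leibniz expansion together with the symmetrization lemmas handling the singular Vandermonde factors (Lemma \ref{Lemma_Vandermonde_sum}). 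Your route can in principle be made to work, since shifted power sums are expressible through ordinary derivative data modulo lower-order terms, but that reduction is itself a nontrivial combinatorial step that your sketch does not supply, and the concentration argument (comparing the $m=1$ and $m=2$ expansions) depends on having these exact identities in hand.
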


Here is a summary of the results of the article:
\begin{enumerate}
\item We prove Theorem \ref{Theorem_main_intro} and its analogues for \emph{all series} of classical Lie groups,
i.e.\ for unitary groups, symplectic groups, and orthogonal groups in odd and even dimensions, see
Theorem \ref{Theorem_main}.

\item We investigate the operation on measures $(\mes^1,\mes^2)\mapsto\mes^1 \otimes \mes^2$ and show that it can be
described as a \emph{deformation} of the well-known notion of the free convolution (see
\cite{VDN},\cite{NS} for the overview of the free probability theory). We further call $\mes^1
\otimes \mes^2$ the \emph{quantized free convolution} of measures $\mes^1$ and $\mes^2$. The
formula for its computation can be found in Theorem \ref{Theorem_linearization} below. In Section
\ref{section_MK} we explain how this operation is related to the conventional (additive) free
convolution.

\item We show in Theorem \ref{Theorem_main_PP} that if one replaces the counting measure \eqref{eq_def_uniform_intro} by another
remarkable probability measure, then an analogue of Theorem \ref{Theorem_main_intro} would involve
\emph{precisely} the  (additive) free convolution. The definition of the probability measure that
we use, is inspired by the work of Perelomov and Popov \cite{PP} on Casimir elements in the
enveloping algebras of classical Lie groups.

\item We study another analogue of Theorem \ref{Theorem_main_intro} for classical Lie groups in
which tensor products are replaced by the restrictions to smaller subgroups. The result turns out
to be related to the free projection (i.e.\ free compression with a free projector) from the free
probability theory and its deformation, see Theorems \ref{Theorem_main}, \ref{Theorem_main_PP},
\ref{Theorem_linearization}.

\end{enumerate}

In the rest of the Introduction we give a historic overview and explain our main results. Careful
formulations of all our theorems can be found in Section \ref{Section_Setup}, and proofs are in
Sections \ref{Section_characters}--\ref{Section_appendix}.

In Section \ref{Section_discussion} we explain various connections around our results: In section
\ref{Section_symmetric} we link our theorems to the results of Biane \cite{Biane_S} on the
asymptotics of the decompositions of representations of the symmetric groups $S(N)$ as
$N\to\infty$. In Section \ref{Section_tilings} we present interpretations of our results in terms
of \emph{random lozenge tilings}. These interpretations make the combinatorial similarities between
the cases of unitary, symplectic and orthogonal groups especially transparent. Finally, in Section
\ref{Section_U_infty} we explain the connection to the limit shape theorems for the characters of
$U(\infty)$.

\subsection{Historic overview}
\label{Section_History} The symmetric group $S(N)$ of permutations of $N$ elements and the unitary
group $U(N)$ of $N\times N$ complex unitary matrices are the model examples of a (noncommutative)
finite and a compact group, respectively. Both series of groups depend on an integer parameter $N$
and the study of the behavior of such groups and their representations as $N\to\infty$ is now known
as the \emph{asymptotic representation theory.}

The study of the asymptotic questions was initiated by Thoma \cite{Thoma-S} and Voiculescu
\cite{Vo_U} who were interested in the classification of \emph{characters} (which are
positive--definite conjugation--invariant continuous functions on a group) and von Neumann
\emph{finite factor representations} of type $II_1$ for the infinite symmetric group
$S(\infty)=\bigcup_{N=1}^\infty S(N)$ and the infinite--dimensional unitary group
$U(\infty)=\bigcup_{N=1}^{\infty} U(N)$. As opposed to the finite $N$ case, the characters of
$S(\infty)$ and $U(\infty)$ depend on infinitely many continuous parameters --- this is one of the
manifestations of the fact that these groups are ``big''. It was discovered by Vershik--Kerov
\cite{VK_U} and Boyer \cite{Bo} that in a hidden form the classification of characters is implied
by the work of Aissen, Edrei,
 Schoenberg, and Whitney \cite{AESW}, \cite{Ed} on totally positive Toeplitz matrices.

Later Vershik and Kerov \cite{VK_S}, \cite{VK_U} gave an alternative approach to the characters of
$S(\infty)$ and $U(\infty)$: they showed that each such character can be \emph{approximated} by a
sequence of normalized conventional characters of irreducible representations of $S(N)$ and $U(N)$,
respectively, as $N\to\infty$. This approach was further developed and generalized in
\cite{Boyer2}, \cite{KOO}, \cite{OkOlsh}, \cite{OkOlsh_BC}, \cite{BO-newA}, \cite{Petrov-boundary},
\cite{GP}, leading, in particular, to classification theorems for characters of infinite
dimensional orthogonal and symplectic groups ($SO(\infty)$ and $Sp(\infty)$) and spherical
functions of certain infinite--dimensional Gelfand pairs.

The sequences of irreducible representations arising in the Vershik--Kerov approximation theory
have a very special form that we now describe. Recall that irreducible representations of $U(N)$
are parameterized by \emph{signatures} which are $N$--tuples of integers
$\lambda_1\ge\lambda_2\ge\dots\ge\lambda_N$. One necessary condition for a sequence of signatures
$\lambda(N)$ to be an approximating sequence for a character of $U(\infty)$ is that as
$N\to\infty$, for each $i$, $\frac{1}{N}\lambda_i(N)$ and also $\frac{1}{N}\sum_{i=1}^N
|\lambda_i(N)|$ converge to finite limits, see \cite{VK_U}, \cite{OkOlsh} for the details. In other
words, all coordinates of $\lambda$ should grow linearly in $N$ and the sum of all $N$ coordinates
should also grow linearly. Clearly, these are very special \emph{``thin''} signatures. For
symmetric, orthogonal and symplectic groups the situation is very similar.

\smallskip

The above discussion naturally leads to the question: Is there any concise asymptotic
representation theory which would describe the limit behavior or irreducible representations whose
signatures are not ``thin''?

\smallskip

This question was first addressed by Biane for unitary groups in \cite{Biane_U} and for symmetric
groups in \cite{Biane_S}. For $U(N)$ he considered  \emph{``ultra--thick''} signatures, i.e.\ those
whose coordinates grow superlinearly as $N\to\infty$.  Let $\eps(N)$ be a sequence of positive
reals such that $\lim_{N\to\infty} \eps(N) N^a=0$ for all $a=1,2,\dots$. The idea of Biane was to
identify signature $\lambda=(\lambda_1\ge \lambda_2\ge\dots\ge\lambda_N)$ with discrete probability
measure on $\mathbb R$
\begin{equation}
\label{eq_Biane_measure}
 m_{Biane}[\lambda]=\frac{1}{N}\sum_{i=1}^N \delta(\eps(N) \lambda_i).
\end{equation}
Biane studied the asymptotic behavior of sequences of irreducible representations of unitary
groups, such that the corresponding measures $m_{Biane}[\lambda(N)]$ weakly converge. In
particular, he showed that if one considers two such sequences $\lambda^1(N)$, $\lambda^2(N)$
$$
 \lim_{N\to\infty} m_{Biane}[\lambda^1(N)]=\mes^1,\quad  \lim_{N\to\infty} m_{Biane}[\lambda^2(N)]=\mes^2,
$$
then the measure corresponding to the typical irreducible component in tensor product
$\pi^{\lambda^1(N)}\otimes\pi^{\lambda^2(N)}$ (in the same sense as in Theorem
\ref{Theorem_main_intro}) converges to a deterministic measure $\mes^1\boxplus\mes^2$. Moreover,
the operation $(\mes^1,\mes^2)\mapsto(\mes^1\boxplus\mes^2)$ is precisely the \emph{free
convolution} from the free probability theory. Recently Collins and Sniady \cite{CS} showed that
the restrictions on $\eps(N)$ can be significantly weakened and the results of Biane \cite{Biane_U}
still hold when merely $\lim_{N\to\infty} \eps(N)\cdot N =0$.

On the contrast, in our Theorem \ref{Theorem_main_intro}, the scaling of the coordinates
$\lambda_i$ is linear. Moreover, the result also becomes conceptually different, the operation
$(\mes^1,\mes^2)\mapsto\mes^1 \otimes \mes^2$ is \emph{not} the free convolution.

One could ask whether there are any \emph{a priori} reasons to distinguish the scaling regime of
Theorem \ref{Theorem_main_intro} (when $\lambda_i(N)$ grows linearly in $N$) from those considered
earlier by Biane \cite{Biane_U}, and Collins--Sniady \cite{CS}. We give several such reasons
below.

In Section \ref{Section_superlinear} we put our results as well as those of \cite{Biane_U},
\cite{Colins-Sniady} into a context of random matrix theory. This shows the key difference between
Theorem \ref{Theorem_main_intro} (and its generalizations) and theorems of \cite{Biane_U},
\cite{CS}, and, to a certain extent, explains why one should expect the direct relation to the free
probability for the latter, but should not for the former.

In Section \ref{Section_tilings} we relate our results to the study of uniformly random
 \emph{lozenge tilings} of polygonal domains. For these polygons the linear dependence of
 $\lambda_i(N)$ on $N$ transforms into the natural assumption of boundedness of the ratios of side lengths as
 $N\to\infty$, which links our theorems to the \emph{limit shape theorems} for tilings of planar domains by
 Cohn--Kenyon--Propp \cite{CKP}, Kenyon--Okounkov--Sheffield \cite{KOS}, Kenyon--Okounkov
 \cite{KO-Burgers}. On the other hand, when $\lambda_i(N)$ grow superlinerly, the domains become degenerate and
 this connection to a large extent disappears.

Further, let us note that in many problems of the asymptotic representation theory there exists a
certain symmetry between horizontal coordinate (which typically corresponds to the \emph{rows} of
the involved Young diagrams) and vertical coordinate (similarly corresponding to \emph{columns}),
cf.\ \cite{VK_S}, \cite{VK_U}, \cite{KOO}, \cite{OkOlsh}, \cite{OkOlsh_BC}. Our limit regime
preserves this symmetry: Indeed, in the graphical illustration of the left panel of Figure
\ref{Figure_mes} we scale both vertical and horizontal coordinates by $N$ (and see a continuous
profile in the limit). On the other hand in the regime of \cite{Biane_U}, \cite{CS} the vertical
and horizontal scalings differ from each other.

Finally, our limit regime is intimately related
 to the character theory for the infinite--dimensional unitary group $U(\infty)$ and recent limit shape theorems of \cite{BBO},
  we elaborate on this connection
  in more detail in Section \ref{Section_U_infty}.

\smallskip

From another direction, in \cite{Biane_S} Biane studied the asymptotics of the decompositions for
restrictions and products of irreducible representations of symmetric groups $S(n)$ as
$n\to\infty$. Recall that irreducible representations of $S(n)$ are parameterized by Young diagrams
with $n$ boxes; the paper \cite{Biane_S} concentrates on ``\emph{balanced}'' Young diagrams, whose
rows and columns grow as $c\sqrt{n}$, which are again opposed to ``thin'' Young diagrams appearing
in the Vershik--Kerov theory. Biane proves the limit shape theorems similar to our Theorem
\ref{Theorem_main_intro} for the operations on irreducible representations on $S(n)$ and also finds
a connection to the free probability.

It is well-known that in many aspects the asymptotic representation theory for $S(n)$ and for
classical Lie groups are parallel. Moreover, recently Borodin and Olshanski \cite{BO-US} showed
how the asymptotic representation theory corresponding to the infinite-dimensional unitary group
$U(\infty)$ can be degenerated into the one for $S(\infty)$. From this point of view, our theorems
can be viewed as the \emph{lifting} of the results of Biane \cite{Biane_S} from the level of
symmetric groups $S(n)$ up to the level of classical Lie groups $U(N)$, $SO(N)$, $Sp(2N)$. In
particular, an important role in \cite{Biane_S} is played by the so-called Kerov transition
measure of the Young diagram and we will show that its exact analogue for classical groups is
given by the Perelemov--Popov measures. More details on the limit transition to $S(n)$ are
provided in Section \ref{Section_symmetric}.

\subsection{Free convolution and semiclassical limit}
\label{Section_superlinear} The aim of this section is to put our results and those of
\cite{Biane_U}, \cite{CS} in the context of random matrix theory and free probability.

The notion of the free convolution was originally defined by Voiculescu \cite{Vo} in the setting of
operator algebras. However, one can explain this notion using only certain generating functions.
First, recall that the usual notion of the convolution of measures is nicely related to
characteristic functions. Namely, if $\phi_1(z)$ is the characteristic function of a probability
measure $\mes^1$, i.e.\ $\phi_1(z)=\int_{\mathbb R} \exp(\ii x\cdot z) \mes^1(dx)$, and $\phi_2(z)$
is the characteristic function of a probability measure $\mes^2$, then the characteristic function
of the convolution of $\mes^1$ and $\mes^2$ is the product $\phi_1(z)\phi_2(z)$. Equivalently, in
the probabilistic language we say that the characteristic function of the sum of independent random
variables is the product of the characteristic functions. Another way to state the same thing is
that when the measures are convoluted, \emph{logarithms} of their characteristic functions are
added.

The free convolution can be defined in the same way with logarithm of the characteristic function
replaced by the so--called Voiculescu $R$--transform of the measure (we recall the definition of
this notion in \eqref{eq_Voiculescu_R})

\begin{definition}[Voiculescu, \cite{Vo}, \cite{Vo2}]
\label{def_free}
 The free convolution is a unique operation on probability measures
 $(\mes^1,\mes^2)\mapsto\mes^1\boxplus\mes^2$, which agrees with the addition of
 $R$--transforms:
$$
 R_{\mes^1}(z)+ R_{\mes^2}(z)= R_{\mes^1\boxplus\mes^2}(z).
$$
\end{definition}

Quantized free convolution $(\mes^1,\mes^2)\mapsto\mes^1 \otimes \mes^2$ appearing in Theorem
\ref{Theorem_main_intro} can be also defined along these lines. Theorem
\ref{Theorem_linearization} below claims that one gets the quantized free convolution by replacing
all the instances of $R$--function $R_{\mes}(z)$ in Definition \ref{def_free} with
\begin{equation}
\label{eq_R_intro}
 R^{quant}_\mes(z)=R_\mes(z)+\frac{1}{z}-\frac{1}{1-e^{-z}}=R_\mes(z)-R_{u[0,1]}(z),
\end{equation}
where $u[0,1]$ is the uniform measure on the interval $[0,1]$. Note, however, that in Theorem
\ref{Theorem_main_intro} only the measures which have bounded by $1$ density with respect to the
Lebesgue measure appear and the definition of the quantized free convolution is restricted only to
this class of measures.

\bigskip Free convolution naturally appears in the study of random Hermitian matrices. Let $A$ be a
$N\times N$ Hermitian matrix with eigenvalues $\{a_i\}_{i=1}^N$. The \emph{empirical measure} of
$A$ is the discrete probability measure on $\mathbb R$ with atoms of weight $\frac{1}{N}$ at points
$a_i$ (cf.\ \eqref{eq_def_uniform_intro}, \eqref{eq_Biane_measure}).

\begin{theorem}[Voiculescu, \cite{Vo3}] For each $N=1,2,\dots$ take two sets of reals
$a(N)=\{a_i(N)\}_{i=1}^N$ and $b(N)=\{b_i(N)\}_{i=1}^N$. Let $\mathcal A(N)$ be the uniformly
random $N\times N$  Hermitian matrix with eigenvalues $a(N)$ and let $\mathcal B(N)$ be the
uniformly random $N\times N$ Hermitian matrix with eigenvalues $b(N)$ such that $\mathcal A(N)$ and
$\mathcal B(N)$ are independent. Suppose that as $N\to\infty$ the empirical measures of $\mathcal
A(N)$ and $\mathcal B(N)$ weakly converge to probability measures $\mes^1$ and $\mes^2$,
respectively. Then the \emph{random} empirical measure of the sum $\mathcal A(N) + \mathcal B(N)$
converges to a \emph{deterministic} measure $\mes^1\boxplus\mes^2$ which is the free convolution of
$\mes^1$ and $\mes^2$. \label{theorem_free_convolution}
\end{theorem}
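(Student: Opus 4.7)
The plan is to prove convergence of moments in probability and identify the limit with the moments of $\mes^1\boxplus\mes^2$. By unitary invariance, $\mathcal A(N)+\mathcal B(N)$ has the same law as $D_a+UD_bU^*$, where $D_a=\mathrm{diag}(a(N))$, $D_b=\mathrm{diag}(b(N))$ and $U$ is Haar-distributed on $U(N)$. Weak convergence of empirical measures will follow from (i) convergence in probability of $\frac1N\mathrm{tr}(D_a+UD_bU^*)^k$ to a deterministic number $m_k$ for every $k\ge 1$, together with (ii) tightness, which is obtained via a standard truncation reducing to the case of uniformly bounded operator norms $\|D_a\|,\|D_b\|$.

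The first step is concentration. The map $U\mapsto\frac1N\mathrm{tr}(D_a+UD_bU^*)^k$ is Lipschitz on $U(N)$ in the Hilbert--Schmidt metric, with Lipschitz constant bounded uniformly in $N$ under the truncation. Gromov--Milman concentration on $U(N)$ then gives variance $O(1/N)$, so convergence in probability reduces to computing the limit of $\E\bigl[\frac1N\mathrm{tr}(D_a+UD_bU^*)^k\bigr]$.

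The second step is a moment computation via Weingarten calculus. Expanding $(D_a+UD_bU^*)^k$ produces $2^k$ summands, each of the form $\mathrm{tr}(D_a^{p_1}UD_b^{q_1}U^*\cdots D_a^{p_r}UD_b^{q_r}U^*)$. Haar-integration expresses its expectation as a sum over pair partitions of the $U,U^*$ strands weighted by Weingarten functions $\mathrm{Wg}(\sigma,N)$; using the standard leading-order asymptotics of $\mathrm{Wg}$, only non-crossing pairings contribute at order $N^0$. Rewriting the surviving sum in terms of the limiting moments $\int x^j\,d\mes^1$ and $\int x^j\,d\mes^2$ and matching it with Speicher's moment--cumulant formula $\sum_{\pi\in NC(k)}\prod_{V\in\pi}\kappa_{|V|}(\cdot)$ shows that $D_a$ and $UD_bU^*$ become asymptotically free with marginals $\mes^1$ and $\mes^2$. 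By additivity of the $R$-transform (Definition \ref{def_free}), the limiting $k$-th moment equals the $k$-th moment of $\mes^1\boxplus\mes^2$.

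The main obstacle is the combinatorial bookkeeping in the Weingarten asymptotics: one must show that (a) only non-crossing pairings survive in the large-$N$ limit, equivalently that all mixed free cumulants in $D_a$ and $UD_bU^*$ vanish, and (b) the surviving combinatorial sum reconstructs precisely the moment series of the free convolution. An alternative approach, closer in spirit to the character-theoretic methods used in the present paper, would evaluate the Harish-Chandra/Itzykson--Zuber spherical integral $\E\exp(zN\,\mathrm{tr}(D_a+UD_bU^*))$ and extract $R$-transform additivity via a saddle-point analysis, trading permutation combinatorics for a more delicate asymptotic analysis of a spherical integral.
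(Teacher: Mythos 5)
The paper does not prove this statement: it is quoted as background from Voiculescu \cite{Vo3} (with the modern strengthening due to Collins and \'Sniady), and no argument for it appears anywhere in the text, so there is no ``paper proof'' to compare against. Judged on its own, your outline is a legitimate and standard route --- in fact it is essentially the Collins--\'Sniady proof of asymptotic freeness of independent unitarily invariant matrices (concentration on $U(N)$ plus Weingarten calculus), rather than Voiculescu's original argument, which proceeded by approximating the unitarily invariant ensembles by independent GUE-type matrices and using asymptotic freeness of Gaussian matrices. The reduction to $D_a+UD_bU^*$, the Gromov--Milman variance bound, the identification of the $N^0$ contribution with non-crossing data, and the matching against Speicher's moment--cumulant formula are all correct steps of that proof.

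Two caveats on completeness. First, you have deferred the entire combinatorial core to ``bookkeeping'': the statement that only non-crossing configurations survive is exactly the assertion that mixed free cumulants vanish, i.e.\ it \emph{is} asymptotic freeness, so as written the sketch assumes what it must prove; to close it you need the Weingarten asymptotics $\mathrm{Wg}(\sigma,N)=O(N^{-p-|\sigma|})$ together with the genus/geodesic inequality $|\sigma|+|\sigma^{-1}\gamma|\ge|\gamma|$ and the identification of the equality case with non-crossing partitions. Second, the theorem as stated assumes only weak convergence of the empirical measures, with no uniform bound on the supports; the moment method then requires the truncation you mention \emph{and} a continuity statement for $\boxplus$ with respect to weak convergence (so that free convolution of the truncated limits converges to $\mes^1\boxplus\mes^2$), which you do not address. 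In the regime relevant to this paper all measures are compactly supported, so the second issue is harmless here, but it is a genuine gap for the theorem in the generality in which it is stated.
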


A link between Theorem \ref{theorem_free_convolution} and decomposition of tensor products of
representations of unitary group is provided by the \emph{semiclassical limit} well-known in the
representation theory, cf.\ \cite{STS}, \cite{He}, \cite{GS}, and also \cite{Ki2} and references
therein. Let us describe this limit in the language of Fourier transforms (or characteristic
functions). For a set of reals $a_1>a_2>\dots>a_N$ let $\mathcal X(a_1,\dots,a_N)$ denote the set
of all $N\times N$ Hermitian matrices with eigenvalues $a_1,\dots,a_N$. The Fourier transform of
the uniform measure on $\mathcal X(a_1,\dots,a_N)$ can be computed using the Harish--Chandra
formula \cite{HC1}, \cite{HC2} (sometimes known also as Itzykson--Zuber \cite{IZ} formula in
physics literature):
\begin{equation}
\label{eq_HC_intro}
 \int_{A\in \mathcal X(a_1,\dots,a_N)} \exp({\rm Trace} (AB)) dA = \frac{\det_{i,j=1,\dots, N} \Bigl(\exp(a_i b_j)\Bigr)}{\prod_{i<j} (a_i-a_j) \prod_{i<j}
 (b_i-b_j)} \prod_{i<j} (j-i),
\end{equation}
where $B$ is a Hermitian matrix with eigenvalues $b_1>b_2>\dots>b_N$.

An analogue of the Fourier transform for the representations of $U(N)$ is their characters, which
are given by the following formula.

\begin{proposition}[Weyl, \cite{Weyl-book}] The value of the character  of irreducible
representation $\pi^{\lambda}$ corresponding to signature $\lambda=(\lambda_1\ge\dots\ge\lambda_N)$
on a unitary matrix $\mathfrak u\in U(N)$ with eigenvalues $u_1,\dots,u_N$ is given by the rational
Schur function:
\begin{equation}
\label{eq_Unitary_character}
 {\rm Trace}(\pi^\lambda(\mathfrak u))= s_\lambda(u_1,\dots,u_N)=\frac{\det_{i,j=1,\dots,N}
 \left(u_i^{\lambda_j+N-j}\right)}{\prod_{1\le i<j\le N} (u_i-u_j)}.
\end{equation}
\end{proposition}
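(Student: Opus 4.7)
The plan is to prove Weyl's formula by the standard route: restrict the character to the maximal torus, show that multiplying by a Vandermonde converts it into an antisymmetric Laurent polynomial, and then pin down which antisymmetric polynomial it is from the highest weight datum. All of this is very classical, but I will lay out the steps I would carry out in order.

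First, since the character $\chi_\lambda(\mathfrak u)={\rm Trace}(\pi^\lambda(\mathfrak u))$ is a continuous class function on $U(N)$, it is determined by its values on the maximal torus $T\subset U(N)$ of diagonal unitary matrices with entries $u_1,\dots,u_N$ on the unit circle. The normalizer of $T$ realises the Weyl group $W=S_N$ acting on $T$ by permuting the $u_i$, so $\chi_\lambda|_T$ is a symmetric Laurent polynomial $f(u_1,\dots,u_N)$. Second, I would multiply by the Weyl denominator $\Delta(u)=\prod_{i<j}(u_i-u_j)$. Because $\Delta$ is antisymmetric under $S_N$, the product $\Delta(u)\cdot f(u)$ is an antisymmetric Laurent polynomial in $u_1,\dots,u_N$.

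Third, I would invoke the elementary fact that antisymmetric Laurent polynomials in $N$ variables form a free module (over symmetric functions) with basis the alternants $a_\mu(u)=\det_{i,j}(u_i^{\mu_j})$ indexed by strictly decreasing integer vectors $\mu_1>\mu_2>\dots>\mu_N$. The Vandermonde identity identifies $\Delta(u)$ itself with $a_\delta(u)$ for $\delta=(N-1,N-2,\dots,1,0)$, so it suffices to show $\Delta(u)\cdot \chi_\lambda(u)=a_{\lambda+\delta}(u)$. Fourth, I would extract the coefficient from the representation theory: the highest weight $\lambda$ appears with multiplicity one among the weights of $\pi^\lambda$, and all other weights $\mu$ occurring in $\pi^\lambda$ satisfy $\mu\prec\lambda$ in the dominance order. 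Expanding $\Delta(u)\chi_\lambda(u)$ in the basis $\{a_\nu\}$ and matching the unique maximal weight contribution yields $\Delta(u)\chi_\lambda(u)=a_{\lambda+\delta}(u)$, which is exactly the Weyl formula in the excerpt.

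The main obstacle is step four: one needs a genuine piece of representation-theoretic input to link the combinatorial bookkeeping of weights to the determinantal alternant. The cleanest way I would implement it is through the Weyl integration formula together with orthogonality of irreducible characters in $L^2(U(N))$: the normalised alternants $a_{\lambda+\delta}/\Delta$ indexed by signatures $\lambda$ form an orthonormal family with respect to the push-forward of Haar measure to $T$, and the highest weight theorem guarantees that each such quotient equals the character of a unique irreducible representation, forcing the pairing $\lambda\leftrightarrow a_{\lambda+\delta}/\Delta$. An alternative route, if one prefers a construction-based argument, is to realise $\pi^\lambda$ via Borel–Weil as sections of an equivariant line bundle on the flag variety and compute the character by Atiyah–Bott or by directly writing down a basis; the result is the same formula.
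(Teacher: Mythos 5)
The paper does not prove this proposition at all: it is stated as a classical result of Weyl and cited directly from \cite{Weyl-book}, so there is no in-paper argument to compare yours against. Your sketch is the standard textbook proof, and its overall logic is sound: restriction to the maximal torus, multiplication by the Weyl denominator, expansion in alternants, and then the identification $\Delta(u)\,\chi_\lambda(u)=a_{\lambda+\delta}(u)$ via orthonormality of characters under the Weyl integration formula combined with the multiplicity-one highest weight. You correctly identify that the last step is where the real content lies; the clean way to finish is exactly as you indicate, namely writing $\Delta\chi_\lambda=\sum_\nu c_\nu a_\nu$ with integer coefficients, using $\|\chi_\lambda\|_{L^2(U(N))}^2=1$ and the orthogonality $\langle a_\mu,a_\nu\rangle_{L^2(T)}=N!\,\delta_{\mu\nu}$ to force $\sum_\nu c_\nu^2=1$, and then using the highest weight to see that the single surviving coefficient sits at $\nu=\lambda+\delta$ and equals $+1$.

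Two small cautions. First, your statement that the antisymmetric Laurent polynomials ``form a free module over symmetric functions with basis the alternants'' is imprecise: over the ring of symmetric Laurent polynomials they form a free module of rank one generated by $\Delta$; it is as a free abelian group (or vector space) that the alternants $a_\mu$, $\mu$ strictly decreasing, form a basis, and that is the fact you actually use. Second, the dominance-order observation in your fourth step does not by itself kill the lower-order alternant coefficients; only the $L^2$ normalisation does, so the two ingredients must be used together rather than as alternatives. With those points tightened, the argument is complete.
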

Observe that under the change of variables $u_i=\exp(b_i)$, $i=1,\dots,N$, formulas
\eqref{eq_HC_intro} and \eqref{eq_Unitary_character} look very similar. (This is a manifestation of
a more general phenomena, cf.\ \cite{Ki2}.) However, when we do this change, the denominators
become different. Let us note that on the level of heuristics, the difference between the free
convolution and its quantized version can be traced back to this difference in denominators. The
product $\prod_{i<j} (u_i-u_j)$ in the definition of Schur functions can be written as
$\det_{i,j=1}^N (u_i^{N-j})$ and the appearance of the set $\{N-j\}_{j=1}^N$ in the last formula
predicts the appearance of the uniform measure on $[0,1]$ in \eqref{eq_R_intro}. We are grateful to
Philippe Biane and Grigori Olshanski for this observation.

\smallskip

Comparing \eqref{eq_Unitary_character} with \eqref{eq_HC_intro} one immediately arrives at the
limit relation between them.

\begin{proposition}
\label{prop_semiclassical}
 Fix $N$ and let $\delta>0$ be an auxiliary small parameter. Fix two sequences of reals
 $a_1>\dots>a_N$ and $b_1>\dots>b_N$. Set
 $$
  \lambda_i=\lfloor a_i \delta^{-1}\rfloor,\quad x_i=\exp(\delta b_i),\quad i=1,\dots,N.
 $$
 Then
 $$
  \lim_{\delta\to 0}\left( \frac{s_\lambda(x_1,\dots,x_N)}{s_\lambda(\underbrace{1,\dots,1}_N)}\right) = \int_{A\in \mathcal X(a_1,\dots,a_N)} \exp({\rm Trace} (AB))
  dA,
 $$
 where $B$ is a Hermitian matrix with eigenvalues $b_1>b_2>\dots>b_N$.
\end{proposition}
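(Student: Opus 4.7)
The plan is to substitute the given scaling directly into the Weyl character formula \eqref{eq_Unitary_character} and the Weyl dimension formula for $s_\lambda(1,\dots,1)$, then read off the leading order in $\delta$ and match it to \eqref{eq_HC_intro}. The whole proof is thus an exercise in keeping track of the Vandermonde denominators and powers of $\delta$.

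First I would apply \eqref{eq_Unitary_character} to write
\[
 \frac{s_\lambda(x_1,\dots,x_N)}{s_\lambda(\underbrace{1,\dots,1}_N)}
 =\frac{\det_{i,j}\bigl(x_i^{\lambda_j+N-j}\bigr)}{\prod_{i<j}(x_i-x_j)}\cdot\frac{1}{s_\lambda(1,\dots,1)},
\]
and use the Weyl dimension formula
\[
 s_\lambda(1,\dots,1)=\prod_{1\le i<j\le N}\frac{\lambda_i-\lambda_j+j-i}{j-i}.
\]
With the substitution $\lambda_i=\lfloor a_i\delta^{-1}\rfloor$ and $x_i=\exp(\delta b_i)$, the entry of the numerator determinant becomes $\exp\bigl(\delta b_i(\lambda_j+N-j)\bigr)$. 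Since $\delta\lambda_j\to a_j$ and $\delta(N-j)\to 0$ as $\delta\to 0$, each entry tends to $\exp(a_j b_i)$, so the numerator converges to $\det_{i,j}\bigl(\exp(a_j b_i)\bigr)=\det_{i,j}\bigl(\exp(a_i b_j)\bigr)$ (the two determinants coincide since one is the transpose of the other).

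Next I would extract the leading $\delta$--behavior of the two Vandermonde--type factors. Expanding $e^{\delta b_i}-e^{\delta b_j}=\delta(b_i-b_j)+O(\delta^2)$ gives
\[
 \prod_{i<j}(x_i-x_j)=\delta^{N(N-1)/2}\prod_{i<j}(b_i-b_j)\bigl(1+O(\delta)\bigr).
\]
On the denominator side, $\lambda_i-\lambda_j+j-i=(a_i-a_j)\delta^{-1}+O(1)$ yields
\[
 s_\lambda(1,\dots,1)=\delta^{-N(N-1)/2}\,\frac{\prod_{i<j}(a_i-a_j)}{\prod_{i<j}(j-i)}\bigl(1+O(\delta)\bigr).
\]
The two powers $\delta^{\pm N(N-1)/2}$ cancel, so combining the three asymptotic relations gives
\[
 \lim_{\delta\to 0}\frac{s_\lambda(x_1,\dots,x_N)}{s_\lambda(1,\dots,1)}
 =\frac{\det_{i,j}\bigl(\exp(a_i b_j)\bigr)\,\prod_{i<j}(j-i)}{\prod_{i<j}(a_i-a_j)\,\prod_{i<j}(b_i-b_j)},
\]
which is exactly the right-hand side of the Harish--Chandra formula \eqref{eq_HC_intro} applied to the integral over $\mathcal X(a_1,\dots,a_N)$.

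There is no real obstacle beyond routine bookkeeping; the only point requiring minor care is justifying that the floor function $\lfloor a_i\delta^{-1}\rfloor$ may be replaced by $a_i\delta^{-1}$ inside the exponential without affecting the limit (the error in the exponent is $O(\delta)$, uniformly in $i,j$, so the entries of the determinant converge uniformly on compact sets in $(b_1,\dots,b_N)$), and checking that the $O(\delta)$ correction terms in the two Vandermonde factors are genuinely negligible once multiplied against the bounded numerator. Once these two remarks are in place the proposition follows directly from formulas \eqref{eq_HC_intro} and \eqref{eq_Unitary_character}.
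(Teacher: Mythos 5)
Your proof is correct and is precisely the computation the paper has in mind: the paper offers no detailed argument, stating only that the proposition follows "immediately" by comparing the Weyl character formula \eqref{eq_Unitary_character} with the Harish--Chandra formula \eqref{eq_HC_intro}, and your bookkeeping of the determinant entries, the two Vandermonde factors, and the cancelling powers $\delta^{\pm N(N-1)/2}$ is exactly that comparison carried out. Nothing further is needed.
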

\noindent {\bf Remark.} The denominator $s_\lambda(\underbrace{1,\dots,1}_N)$ coincides with the
dimension of $\pi^\lambda$ and can be computed by the Weyl's dimension formula (see e.g.\
\cite{Zh})
\begin{equation}
\label{eq_Weyl_dimension}
 s_\lambda(\underbrace{1,\dots,1}_N)=\prod_{1\le i<j\le N} \frac{\lambda_i-i-\lambda_j+j}{j-i}.
\end{equation}

Now observe that in the limit transition of Proposition \ref{prop_semiclassical} the tensor product
of representations becomes the sum of independent Hermitian matrices. Indeed, the character of the
former is the product of characters and the characteristic function (Fourier transform) of the
latter is the product of characteristic functions.

This observation to some extent explains the appearance of the free convolution in the results of
Biane \cite{Biane_U} and Collins--Sniady \cite{CS}: when $\eps(N)$ in \eqref{eq_Biane_measure}
decays faster than a linear function, irreducible representations of $U(N)$ degenerate into
measures on Hermitian matrices; the latter are intrinsically linked to the free convolution, thus,
also the former in this limit regime. Of course, a great amount of work is required to turn this
observation into a rigorous argument, and the proofs in \cite{Biane_U}, \cite{CS} are very delicate
and non-trivial.

On the other hand, we observe that in the limit regime of Theorem \ref{Theorem_main_intro} the
degeneration to random matrices does not happen which is reflected in the new notion of the
\emph{quantized free convolution} replacing the free convolution of random matrices.

One could predict from the above discussion that there should be a limit transition, which
transforms the quantized free convolution into the (conventional) free convolution. This is indeed
true and can be seen from the following asymptotic relation between the Voiculescu $R$--transform
$R_{\mes}(z)$ and its quantized version $R^{quant}_\mes(z)$:
$$
 \lim_{L\to+\infty} \frac{R^{quant}_{\mes*L}\left(\frac{z}{L}\right)}{L}=R_{\mes}(z),
$$
where $\mes*L$ is a probability measure whose value on a measurable set $A$ is defined via
$$
 (\mes*L)(A)=\mes\left(A/L\right),\quad A\subset \mathbb R,\quad L>0.
$$

 As a final remark of this section, let us note that the methods of the present article are
 different from those of \cite{Biane_U}, \cite{CS}. However, it is plausible that our
 methods can be used to give another proof of most of the results of these articles.

\subsection{Perelomov--Popov measures}

In the previous section we were arguing that when $\lambda_i(N)$ grow linearly with $N$ there is no
direct connection between the asymptotics of the measures \eqref{eq_def_uniform_intro} and free
probability. However, this connection can be restored if we change the measure which corresponds to
a signature.

The ``correct'' definition of the measure comes from the work of Perelomov and Popov \cite{PP}
 on the centers of universal enveloping algebras of classical Lie groups. In our
Introduction (in order to keep it short) we present their construction only for the unitary groups,
but parallel stories exist in \cite{PP} for orthogonal and symplectic groups as well. In Section
\ref{Section_results} we present the results for the corresponding measures for all classical
groups.

Let $\mathcal U(\mathfrak{gl}_N)$ denote the complexified universal enveloping algebra of $U(N)$.
This algebra is spanned by generators $E_{ij}$ ($E_{ij}$ as an element of the Lie algebra
$\mathfrak{gl}_N$ can be identified with the $N\times N$ matrix whose single non-zero matrix
element is $1$ at the intersection of the $i$th row and the $j$th column) subject to the relations
$$
 [E_{ij}, E_{kl}]=\delta_{j}^k E_{il} -\delta_i^l E_{kj}.
$$
Let $E(N)\in \mathcal U(\mathfrak{gl}_N)\otimes {\rm Mat}_{N\times N}$ denote the following
$N\times N$ matrix, whose matrix elements belong to $\mathcal U(\mathfrak{gl}_N)$:
$$
E(N)=
\begin{pmatrix} E_{11}& E_{12}& \dots& E_{1N}\\
                E_{21}& \ddots&  &E_{2N}\\
                \vdots&&&\vdots\\
                E_{N1}& E_{N2}&\dots& E_{NN}
\end{pmatrix}
$$
Let $\mathcal Z(\mathfrak{gl}_N)$ denote the center of $\mathcal U(\mathfrak{gl}_N)$ and recall
that each element of $\mathcal Z(\mathfrak{gl}_N)$ acts in an irreducible representation of $U(N)$
(thus, also of $\mathcal U(\mathfrak{gl}_N)$) as a scalar operator.

\begin{theorem}[Perelomov--Popov, \cite{PP}] For $p=0,1,2,\dots$ consider the element
$$
 X_p={\rm Trace}\left( E^p\right)=\sum_{i_1,\dots,i_p=1}^N E_{i_1 i_2} E_{i_2 i_3}\cdots E_{i_p i_1} \in \mathcal U(\mathfrak{gl}_N).
$$
Then $X_p\in\mathcal Z(\mathfrak{gl}_N)$. Moreover, in the irreducible representation parameterized
by $\lambda=(\lambda_1\ge\dots\ge\lambda_N)$ the element $X_p$ acts as scalar $C_p[\lambda]$
\begin{equation}
\label{eq_definition_PP_elements}
 C_p[\lambda]= \sum_{i=1}^N \left(\prod_{j\ne i} \frac{(\lambda_i-i)-(\lambda_j-j)-1}{(\lambda_i-i)-(\lambda_j-j)}\right)
 \left(\lambda_i+N-i\right)^p.
\end{equation}
\end{theorem}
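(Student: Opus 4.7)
The statement splits naturally into two independent claims: (a) $X_p$ lies in $\mathcal{Z}(\mathfrak{gl}_N)$, and (b) the scalar by which it acts on $\pi^\lambda$ is the rational function $C_p[\lambda]$ in \eqref{eq_definition_PP_elements}. For (a) I would argue directly. Applying the Leibniz rule,
\[
 [E_{ab},\,E_{i_1 i_2}E_{i_2 i_3}\cdots E_{i_p i_1}]
 = \sum_{k=1}^p E_{i_1 i_2}\cdots E_{i_{k-1}i_k}\,[E_{ab},E_{i_k i_{k+1}}]\,E_{i_{k+1}i_{k+2}}\cdots E_{i_p i_1},
\]
with cyclic convention $i_{p+1}=i_1$, and each inner commutator equals $\delta_{b,i_k}E_{a,i_{k+1}} - \delta_{a,i_{k+1}}E_{i_k,b}$. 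After summing over the indices $i_1,\dots,i_p$, the ``outgoing'' $\delta$-term from position $k$ precisely cancels the ``incoming'' $\delta$-term from position $k+1$ after a relabeling of dummy indices; the total telescopes to $0$, establishing $X_p\in\mathcal{Z}(\mathfrak{gl}_N)$.

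For (b), observe first that each monomial $E_{i_1 i_2}E_{i_2 i_3}\cdots E_{i_p i_1}$ has $\mathfrak{h}$-weight $\sum_k(e_{i_k}-e_{i_{k+1}})=0$, so $X_p$ preserves the one-dimensional top weight space of $\pi^\lambda$. Hence $X_p v_\lambda = C_p[\lambda] v_\lambda$, and it remains to compute the coefficient. I would organize this through the formal generating function
\[
 R(z) \;:=\; \sum_{p\ge 0} z^{-p-1} X_p \;=\; \mathrm{tr}\bigl((z-E)^{-1}\bigr),
\]
viewed as a Laurent series in $z^{-1}$ with coefficients in $\mathcal U(\mathfrak{gl}_N)$. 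The target formula \eqref{eq_definition_PP_elements} is equivalent, upon expanding in $z^{-1}$, to the identity
\[
 R(z)\,v_\lambda \;=\; \left(\sum_{i=1}^N \frac{1}{z-\ell_i}\prod_{j\ne i}\frac{\ell_i-\ell_j-1}{\ell_i-\ell_j}\right) v_\lambda,
 \qquad \ell_i := \lambda_i+N-i.
\]

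To prove this identity I would introduce the \emph{Capelli column determinant}
\[
 D(z) \;:=\; \mathrm{cdet}\bigl(z\,I + \rho - E\bigr), \qquad \rho=\mathrm{diag}(0,-1,\dots,-(N-1)),
\]
where $\mathrm{cdet}$ denotes the column-ordered determinant $\sum_\sigma \mathrm{sgn}(\sigma)\prod_{j=1}^N(\cdots)_{\sigma(j),j}$. A standard argument (using that the strictly upper $E_{ij}$ kill $v_\lambda$, so only the identity permutation contributes up to annihilating terms) gives $D(z)v_\lambda = \prod_i(z-\ell_i)\,v_\lambda$, and the same type of Leibniz telescoping as in Step (a) shows $D(z)$ is central. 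The key technical link is a non-commutative Newton identity expressing $R(z)$ as a logarithmic-derivative-type combination of $D(z)$; a partial fraction expansion then delivers the desired weights $\prod_{j\ne i}\frac{\ell_i-\ell_j-1}{\ell_i-\ell_j}$ as residues, and the $-1$ in the numerator is exactly the ``quantum correction'' produced when reordering factors in the column determinant past the diagonal shift $\rho$. The main obstacle is precisely this last identity: one must track the non-commutativity carefully enough to see how the classical Liouville/Newton relations between a characteristic polynomial and the trace of the resolvent acquire the shifts responsible for the discreteness in \eqref{eq_definition_PP_elements}.
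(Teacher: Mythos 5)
This theorem is not proved in the paper at all: it is imported verbatim from Perelomov--Popov \cite{PP}, so there is no internal proof to compare against. Judged on its own terms, your part (a) is complete and correct: the Leibniz expansion of $[E_{ab},X_p]$ together with the cyclic cancellation of the $\delta_{b,i_k}$-term at position $k$ against the $\delta_{a,i_{k+1}}$-term at position $k-1$ is exactly the standard argument that ${\rm Trace}(E^p)$ is $\mathrm{ad}$-invariant, and it closes.

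Part (b), however, has a genuine gap, and it sits precisely where you locate it yourself. The entire computation of $C_p[\lambda]$ is routed through the noncommutative Newton identity relating $R(z)={\rm tr}\bigl((z-E)^{-1}\bigr)$ to the ratio $D(z-1)/D(z)$ of Capelli determinants (equivalently, to the statement that the generating series equals $1-\prod_j\frac{z-\ell_j-1}{z-\ell_j}$ on $V_\lambda$, whose partial-fraction expansion produces the weights $\prod_{j\ne i}\frac{\ell_i-\ell_j-1}{\ell_i-\ell_j}$). That identity is itself a nontrivial theorem — it is proved in the Yangian literature (cf.\ \cite{Olshanski-Nazarov-Molev}, \cite{Molev-book}) by induction on $N$ via quasideterminants or via the $R$-matrix formalism — and asserting that ``one must track the non-commutativity carefully'' does not substitute for that proof; without it nothing pins down the $-1$ in the numerators. (There is also a small convention slip: with your column-ordered determinant and $\rho={\rm diag}(0,-1,\dots,-(N-1))$ the identity permutation contributes $\prod_j\bigl(z-(\lambda_j+j-1)\bigr)$ rather than $\prod_j(z-\ell_j)$; the shift must run the other way.) A shorter and more self-contained route, closer to the original argument of \cite{PP} and to the way the paper packages the weights in Definition \ref{Definition_PP}, is to consider $\mathcal E=\sum_{i,j}E_{ij}\otimes e_{ji}$ acting on $V_\lambda\otimes\mathbb C^N$: writing $\mathcal E$ in terms of the quadratic Casimir via $\Delta(C_2)-C_2\otimes 1-1\otimes C_2$ shows it acts on the irreducible summand indexed by $\lambda^{(i\pm)}$ by the explicit scalar $\ell_i$ (up to a uniform shift), whence $C_p[\lambda]=\sum_i \ell_i^{\,p}\,\dim(\lambda^{(i-)})/\dim(\lambda)$ by taking traces, and the Weyl dimension formula converts the dimension ratios into $\prod_{j\ne i}\frac{\ell_i-\ell_j-1}{\ell_i-\ell_j}$. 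That route avoids the Newton identity entirely and explains conceptually why the weights are ratios of dimensions.
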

After their discovery, the elements $X_p$ and matrix $E$ have been used in a number of contexts: in
addition to being a nice and useful family of generators (``higher order Casimir operators'') of
the centers of the universal enveloping algebras of the classical Lie groups, they play an
important role in the study of the so-called characteristic identities (cf.\ \cite{Gould} and
references therein) and in the study of Yangians (cf.\ \cite{Olshanski-Nazarov-Molev},
\cite{Molev-book}). They were also to a certain extent already used in the context of the
asymptotic representation theory of symmetric and unitary groups in \cite{Biane_U}, \cite{Biane_S},
\cite{CS}.

For us the elements $X_p$ serve as a motivation to define for a signature $\lambda$ a probability
measure $m_{PP}[\lambda]$ on $\mathbb R$, whose moments would be described by the right-hand side
of \eqref{eq_definition_PP_elements}. Embedding into the definition the rescaling which will be
useful in $N\to\infty$ limit, we arrive at the following formula for the \emph{Perelomov--Popov
measure}
\begin{equation}
\label{eq_PP_intro}
 m_{PP}[\lambda]=\frac{1}{N}\sum_{i=1}^N \left(\prod_{j\ne i} \frac{(\lambda_i-i)-(\lambda_j-j)-1}{(\lambda_i-i)-(\lambda_j-j)}\right) \delta\left(\frac{\lambda_i+N-i}N\right).
\end{equation}

From the probabilistic point of view, the definition of the measure $m_{PP}[\lambda]$ might look
mysterious. Moreover, while the counting measures are related to the combinatorics of lozenge
tilings (see Section \ref{Section_tilings} for the details), we do not yet know any good
combinatorial or probabilistic interpretations for the Perelomov--Popov measures. But, from the
other side, we prove that these measures are much closer than the counting ones related to the free
probability: An analogue of Theorem \ref{Theorem_main_intro} holds for measures $m_{PP}[\lambda]$
with quantized free convolution replaced by the conventional free convolution.

\begin{theorem}
\label{Theorem_main_intro_PP}
 Suppose that $\lambda^1(N),\lambda^2(N)\in\widehat U(N)$, $N=1,2,\dots$, are $2$ sequences of
 signatures which satisfy a technical assumption of Definition \ref{definition_regularity} and
 such that
 $$
  \lim_{N\to\infty} m_{PP}[\lambda^i(N)]=\mes^i,\text{ (weak convergence), } i=1,2.
 $$
 Let $\pi(N)=\pi^{\lambda^1(N)}\otimes \pi^{\lambda^2(N)}$. Then as $N\to\infty$ random measures $m_{PP}[\rho^{\pi(N)}]$ converge in the sense of
moments, in probability to a deterministic measure $\mes^1 \boxplus \mes^2$ which is the free
convolution of $\mes^1$ and $\mes^2$.
\end{theorem}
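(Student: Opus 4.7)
The plan is to deduce Theorem \ref{Theorem_main_intro_PP} from Theorem \ref{Theorem_main_intro} together with the linearization of Theorem \ref{Theorem_linearization}, via an explicit asymptotic relationship between the Cauchy transforms of the counting measure $m[\lambda]$ and the Perelomov--Popov measure $m_{PP}[\lambda]$. The key point is that, while the two families of measures differ, in the large-$N$ limit their Cauchy transforms are linked by a fixed nonlinear change of variable, and this change of variable turns the quantized $R$-transform on the counting side into the ordinary $R$-transform on the PP side.

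The first step is an exact finite-$N$ identity. Set $\ell_j=\lambda_j+N-j$ and $P(x)=\prod_{j=1}^N(x-\ell_j)$. Using
$$
\prod_{j\ne i}\frac{\ell_i-\ell_j-1}{\ell_i-\ell_j}=\frac{-P(\ell_i-1)}{P'(\ell_i)},
$$
a residue computation applied to the rational function $f(x)=-P(x-1)/(P(x)(Nz-x))$, whose only poles are at $x=\ell_i$ and $x=Nz$ and which satisfies $f(x)\sim 1/x$ at infinity, yields the closed form
$$
G_{m_{PP}[\lambda]}(z)=1-\prod_{j=1}^N\frac{z-\ell_j/N-1/N}{z-\ell_j/N}.
$$
Expanding the logarithm of the product in powers of $1/N$ then gives the asymptotic identity
$$
G_{m_{PP}[\lambda]}(z)=1-\exp\bigl(-G_{m[\lambda]}(z)\bigr)+O(1/N),
$$
uniformly on compact subsets of $\mathbb{C}$ bounded away from the limiting support.

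Using the regularity condition of Definition \ref{definition_regularity}, the map sending a weak limit $\nu$ of counting measures to the weak limit $\mes$ of the associated PP measures is determined on Cauchy transforms by $G_{\mes}(z)=1-\exp(-G_{\nu}(z))$, and is a bijection on the class of limits that arises here. Therefore the hypothesis $m_{PP}[\lambda^i(N)]\to\mes^i$ is equivalent to $m[\lambda^i(N)]\to\nu^i$ with $G_{\mes^i}=1-\exp(-G_{\nu^i})$. By Theorem \ref{Theorem_main_intro}, $m[\rho^{\pi(N)}]\to\nu^1\otimes\nu^2$ in probability, and applying the finite-$N$ identity pathwise to the random output signatures produces convergence of $m_{PP}[\rho^{\pi(N)}]$ to the unique measure $\tilde\nu$ with $G_{\tilde\nu}(z)=1-\exp(-G_{\nu^1\otimes\nu^2}(z))$. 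It remains to identify $\tilde\nu$ with $\mes^1\boxplus\mes^2$: combining the definition $R^{quant}_\mes(z)=R_\mes(z)+1/z-1/(1-e^{-z})$ with the Cauchy transform identity above yields the functional relation $R^{quant}_{\nu^i}(z)=R_{\mes^i}(1-e^{-z})$, so the linearization $R^{quant}_{\nu^1\otimes\nu^2}=R^{quant}_{\nu^1}+R^{quant}_{\nu^2}$ from Theorem \ref{Theorem_linearization}, under the substitution $v=1-e^{-z}$, becomes $R_{\tilde\nu}(v)=R_{\mes^1}(v)+R_{\mes^2}(v)$, which is precisely the defining relation for the free convolution.

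The main obstacle is to promote the deterministic asymptotic Cauchy transform identity to a statement about the \emph{random} measures $m_{PP}[\rho^{\pi(N)}]$ at the level of moments in probability. The PP weights are non-negative and sum to $N$, but individual weights can exceed $1$, so high-order moments of $m_{PP}$ are more delicate than those of $m$ and require uniform control and concentration estimates beyond what Theorem \ref{Theorem_main_intro} provides directly. A cleaner but more computational alternative would be a direct attack, writing $\mathbb{E}\int x^p\,dm_{PP}[\rho^{\pi(N)}]=\mathrm{Trace}(\Delta(X_p))/(\dim(\pi(N))\,N^{p+1})$, expanding $\Delta(X_p)=\mathrm{Trace}\bigl((E\otimes 1+1\otimes E)^p\bigr)$, and isolating the non-crossing colored contributions that survive as $N\to\infty$; these reproduce directly the moment--cumulant formula for $\mes^1\boxplus\mes^2$.
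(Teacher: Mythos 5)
Your proposal is correct, and its overall architecture coincides with the paper's: reduce to the Law of Large Numbers for counting measures (Theorem \ref{Theorem_main}), transfer the result to Perelomov--Popov measures via a moment/Cauchy-transform link, and identify the limit as a free convolution through the substitution $v=1-e^{-z}$ in the $R$-transform --- this last step is verbatim the derivation of \eqref{eq_linear_final_1_PP} from \eqref{eq_linear_final_1} at the end of Section 5. Where you genuinely differ is in how the link between $m[\lambda]$ and $m_{PP}[\lambda]$ is established. The paper imports it from the explicit formulas of Perelomov--Popov and Popov expressing each $c_k[\lambda]$ as a polynomial in $s_1[\lambda],\dots,s_k[\lambda]$ with coefficients $f_i(N)$ admitting limits (the Proposition preceding \eqref{eq_x21}), whence the generating-function identity \eqref{eq_x22}. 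You instead derive the exact finite-$N$ identity $G_{m_{PP}[\lambda]}(z)=1-\prod_{j}\frac{z-\ell_j/N-1/N}{z-\ell_j/N}$ by a residue computation; this is correct (the function $f(x)=-P(x-1)/(P(x)(Nz-x))$ does behave as $1/x$ at infinity, so its residues sum to $1$, and the residues at $x=\ell_i$ reproduce the PP weights), and expanding the logarithm of the product recovers \eqref{eq_x22} in a self-contained way, which is a real gain over citing \cite{PP}, \cite{Popov2}. One remark: your closing worry about promoting the identity to the random measures is unfounded, and the ``cleaner alternative'' via $\mathrm{Trace}(\Delta(X_p))$ is unnecessary. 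Since your product formula shows that each $c_k[\mu]$ is a fixed polynomial in $s_1[\mu],\dots,s_k[\mu]$ with $N$-dependent coefficients converging as $N\to\infty$, and since the supports of all measures involved lie in a fixed compact set (by the Littlewood--Richardson rule the output signatures satisfy $\lambda^1_N+\lambda^2_N\le\mu_i\le\lambda^1_1+\lambda^2_1$), convergence in probability of the counting moments passes to the PP moments by continuity alone --- this is exactly the content of Theorem \ref{theorem_moment_link_to_PP}, and no further uniform concentration estimate is needed.
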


Theorem \ref{Theorem_main_intro_PP} leads us to conjecture that the images of matrices $E$ in
different representations are asymptotically free, since this would agree with their sum being
asymptotically related to the free convolution (see \cite{VDN}, \cite{NS} for the details on the
freeness). Let us give more definitions to state a conjecture.

Let us take two signatures $\lambda^1(N),\lambda^2(N)\in\widehat U(N)$, recall that
$\pi^{\lambda^1(N)}$ and $\pi^{\lambda^2(N)}$ are the corresponding representations and let
$V_{\lambda^1(N)}$, $V_{\lambda^2(N)}$ denote the spaces of these representations. Consider the
complex algebra
$$
 \mathbf{A}(N)={\rm End}(V_{\lambda^1(N)})\otimes {\rm End}(V_{\lambda^2(N)})\otimes {\rm
 Mat}_{N\times N}
$$
equipped with the usual normalized trace
$$
\dfrac{{\rm Trace}_{V_{\lambda^1(N)}}\otimes {\rm Trace}_{V_{\lambda^2(N)}}\otimes {\rm
Trace}_N}{\dim(V_{\lambda^1(N)})\cdot \dim(V_{\lambda^2(N)})\cdot N}.
$$
$\mathbf{A}(N)$ can be viewed as a \emph{non-commutative probability space}, cf.\ \cite{NS}.
Further define the element $E(\lambda^1(N))\in \mathbf{A}(N)$ by replacing $E_{ij}$,
$i,j=1,\dots,N$ in the definition of $E(N)$ by $\pi^{\lambda^1(N)}(E_{ij})\otimes Id$, where $Id$
is the identical operator. Similarly define $E(\lambda^2(N))\in\mathbf{A}(N)$ by replacing
$E_{ij}$ by $Id \otimes \pi^{\lambda^2(N)}(E_{ij})$.

\begin{conjecture}
\label{conjecture}
  Suppose that $\lambda^1(N),\lambda^2(N)\in\widehat U(N)$, $N=1,2,\dots$, are $2$ sequences of
 signatures which satisfy a technical assumption of Definition \ref{definition_regularity} and
 such that
 $$
  \lim_{N\to\infty} m_{PP}[\lambda^i(N)]=\mes^i,\quad i=1,2.
 $$
 Then as $N\to\infty$ the elements $\frac{1}{N}E(\lambda^1(N))$ and $\frac{1}{N}E(\lambda^2(N))$ of non-commutative
 probability spaces $\mathbf{A}(N)$ become asymptotically free.
\end{conjecture}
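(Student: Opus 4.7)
The plan is to verify asymptotic freeness directly, through the standard combinatorial characterization: one must show that for every $k\ge 1$, every alternating sequence $i_1,\dots,i_k\in\{1,2\}$, and every $p_1,\dots,p_k\ge 1$,
$$\lim_{N\to\infty}\tau_N\!\left(\prod_{j=1}^{k}\left[\Bigl(\tfrac{1}{N}E(\lambda^{i_j}(N))\Bigr)^{p_j}-m_{p_j}^{i_j}\right]\right)=0,\qquad m_{p}^i:=\int x^p\,d\mes^i(x),$$
where $\tau_N$ is the normalized trace on $\mathbf{A}(N)$. The easier half, convergence of individual moments, is already implied by the Perelomov--Popov theorem: a direct computation using the centrality of $X_p=\mathrm{Tr}(E^p)$ gives $\tau_N\bigl((\tfrac{1}{N}E(\lambda^i(N)))^p\bigr)=\frac{C_p[\lambda^i(N)]}{N^{p+1}}=\int x^p\,dm_{PP}[\lambda^i(N)]$, which converges to $m_p^i$ by hypothesis. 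The substantive work lies in the joint moments.

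To handle a general mixed word $W=E(\lambda^{i_1})^{a_1}\cdots E(\lambda^{i_k})^{a_k}$, I would first exploit the commutativity of the entries across the two tensor factors: the entries of $E(\lambda^1)$ live in $\mathrm{End}(V_{\lambda^1})\otimes\mathrm{Id}$, those of $E(\lambda^2)$ in $\mathrm{Id}\otimes\mathrm{End}(V_{\lambda^2})$, so they commute pairwise. Expanding the matrix product $W$ entrywise produces, for every fixed configuration of internal indices, a product of the form $\pi^{\lambda^1}(W^{(1)})\otimes\pi^{\lambda^2}(W^{(2)})$, where $W^{(\nu)}$ is the sub-word of $W$ consisting of the letters from representation $\nu$ (with appropriate matching of summation indices). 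Consequently $\tau_N(W)$ reduces to a combinatorial sum, over configurations on the $N$-dimensional matrix side, of products of two single-representation traces $\mathrm{Tr}_{V_{\lambda^1}}\bigl(\pi^{\lambda^1}(W^{(1)})\bigr)\cdot\mathrm{Tr}_{V_{\lambda^2}}\bigl(\pi^{\lambda^2}(W^{(2)})\bigr)$.

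The evaluation of these two single-representation traces is where deeper representation-theoretic input enters. I would use the characteristic identity for the matrix $E$ in an irreducible representation (cf.~\cite{Gould}), which exhibits a polynomial annihilator for $E|_{V_\lambda}$ with roots $\{\lambda_i+N-i\}$, together with the Perelomov--Popov scalars $C_p[\lambda]$, in order to reduce the trace of an arbitrary word in $\pi^\lambda(E_{ij})$ to explicit expressions in the shifted coordinates $\lambda_i+N-i$. The asymptotics $m_{PP}[\lambda^i(N)]\to\mes^i$ then allow one to pass to the $N\to\infty$ limit for each fixed word and compare it with the corresponding free moment.

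The main obstacle is the following. Theorem~\ref{Theorem_main_intro_PP}, applied to $(E(\lambda^1)+E(\lambda^2))^p$, pins down only a single linear combination of the mixed moments at each length $p$ --- the fully symmetric sum over all $2^p$ words. Asymptotic freeness, however, requires identifying \emph{each} mixed moment separately and, in particular, distinguishing, say, $\tau_N(E(\lambda^1)E(\lambda^2)E(\lambda^1)E(\lambda^2))$ from $\tau_N(E(\lambda^1)^2E(\lambda^2)^2)$. A promising source of the missing information is the full family of Capelli-type central elements / quantum minors of $E$ studied by Nazarov, Olshanski, and Molev (cf.~\cite{Molev-book}), whose explicit scalar actions on each irreducible component $V_\mu\subset V_{\lambda^1}\otimes V_{\lambda^2}$ probe $\mu$ in a much finer way than the Perelomov--Popov scalars alone; tracking their joint asymptotic behaviour under the rescaling dictated by the linear growth of $\lambda(N)$ should generate enough constraints to disentangle the individual mixed moments and thereby to verify asymptotic freeness.
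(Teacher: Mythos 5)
First, a point of reference: the statement you are trying to prove is stated in the paper as Conjecture \ref{conjecture} and is \emph{not} proved there. The authors offer it only as a prediction motivated by Theorem \ref{Theorem_main_intro_PP} (noting that Biane proved an analogue in the superlinear regime), so there is no proof in the paper to compare your argument against.

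Your proposal is not a proof either, and to your credit you say so yourself. The correct and easy parts are in place: the identity $\tau_N\bigl((\tfrac{1}{N}E(\lambda^i(N)))^p\bigr)=C_p[\lambda^i(N)]/N^{p+1}=\int x^p\,m_{PP}[\lambda^i(N)](dx)$ follows from centrality of $X_p$ and the Perelomov--Popov formula, and this gives convergence of the individual moments. You also correctly identify the genuine obstruction: Theorem \ref{Theorem_main_intro_PP} concerns the spectral measure of the \emph{sum} inside each isotypic component, which controls only the fully symmetrized combination $\tau_N\bigl((E(\lambda^1)+E(\lambda^2))^p\bigr)$ of the mixed moments, whereas asymptotic freeness requires pinning down every word $\tau_N\bigl(E(\lambda^{i_1})^{a_1}\cdots E(\lambda^{i_k})^{a_k}\bigr)$ individually --- e.g.\ distinguishing $\tau_N(E^1E^2E^1E^2)$ from $\tau_N((E^1)^2(E^2)^2)$, which already at length four is exactly where freeness differs from weaker notions. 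But your proposed resolution --- that the quantum minors or Capelli-type central elements of \cite{Molev-book} ``should generate enough constraints'' --- is only a heuristic. You do not exhibit the scalar by which any such element acts on $V_\mu\subset V_{\lambda^1}\otimes V_{\lambda^2}$, you do not explain how its action relates to a specific \emph{mixed} word in $E(\lambda^1)$ and $E(\lambda^2)$ (these elements live in a single copy of $\mathcal U(\mathfrak{gl}_N)$ acting diagonally, so relating them to non-symmetric words is itself a nontrivial step), and you do not verify that the resulting system of constraints determines the mixed moments uniquely in the limit. Until those steps are supplied, the conjecture remains exactly as open after your argument as before it; what you have is a reasonable research plan, not a proof.
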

We refer to \cite[Lecture 5]{NS} for the definition of the asymptotical freeness.

Note that in the superlinear limit regime discussed in Section \ref{Section_superlinear} an
analogue of Conjecture \ref{conjecture} was proved by Biane \cite{Biane_U}.

\subsection{Markov--Krein correspondence}
\label{section_MK}

It is natural to ask about the exact relationship between the free convolution and its quantized
version that we study in the present article. An asymptotic relation was explained in Section
\ref{Section_superlinear}, and a non-asymptotic one is provided by the following theorem.

\begin{theorem}
\label{theorem_intertwining}
 For every probability measure $\rho$ on $\mathbb R$ which has  compact support, is absolutely
 continuous with respect to the Lebesgue measure, and has bounded by $1$ density, there exists a probability
 measure $Q(\rho)$ with compact support on $\mathbb R$, such that
$$ \exp \left(- \sum_{k=0}^{\infty} s_k z^{k+1} \right)=1 - \sum_{k=0}^{\infty} c_k z^{k+1},$$
where $s_k$ and $c_k$ are the moments of $\rho$ and $Q(\rho)$, respectively, i.e.\
$$
 s_k=\int_{\mathbb R} x^k \rho(dx),\quad c_k=\int_{\mathbb R} x^k Q(\rho)(dx), \quad
 k=0,1,2,\dots.
$$
The operation $Q$ intertwines the free convolution and its quantized version, i.e.\ for any two
$\rho_1$, $\rho_2$, as above, we have
$$
 Q(\rho_1)\boxplus Q(\rho_2)=Q(\rho_1\otimes\rho_2).
$$
\end{theorem}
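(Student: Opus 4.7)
The strategy is to recognize the stated moment identity as the classical Markov--Krein (Kerov) transform in disguise, rewritten between Cauchy transforms, and then to build everything from the compact formula it produces.

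\emph{Cauchy-transform reformulation.} Substituting $w=1/z$ turns $\sum_{k\ge 0} s_k z^{k+1}$ into the Cauchy transform $G_\rho(w)=\int(w-x)^{-1}\rho(dx)$, and analogously for $Q(\rho)$. The stated identity is equivalent to
\[
G_{Q(\rho)}(w) \;=\; 1-\exp\bigl(-G_\rho(w)\bigr),
\]
which I take as the definition of $G_{Q(\rho)}$.

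\emph{Existence of $Q(\rho)$.} This function is holomorphic on $\mathbb{C}\setminus\mathrm{supp}(\rho)$ and satisfies $wG_{Q(\rho)}(w)\to 1$ at infinity. The density bound $\rho'\le 1$ yields the pointwise estimate
\[
|\Im G_\rho(x+iy)| \;=\; y\!\int\!\frac{\rho(dt)}{(x-t)^2+y^2} \;\le\; y\!\int\!\frac{dt}{(x-t)^2+y^2} \;=\; \pi,\qquad y>0,
\]
so $\Im(-G_\rho)\in[0,\pi]$ throughout $\mathbb{C}^+$ and $\exp(-G_\rho(w))$ lies in the closed upper half-plane. Hence $G_{Q(\rho)}$ maps $\mathbb{C}^+$ into $\mathbb{C}^-$, and by the Nevanlinna/Herglotz representation it is the Cauchy transform of a probability measure on $\mathbb{R}$; compactness of support is inherited from $\rho$ through the holomorphy of $G_{Q(\rho)}$ outside $\mathrm{supp}(\rho)$.

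\emph{Intertwining.} Write $K_\nu(z):=G_\nu^{-1}(z)=R_\nu(z)+1/z$. Substituting $w=K_{Q(\rho)}(s)$ into the defining relation gives $s=1-e^{-G_\rho(w)}$, hence $G_\rho(w)=-\log(1-s)$ and therefore $K_{Q(\rho)}(s)=K_\rho(-\log(1-s))$. Unpacking $K=R+1/z$ and using $1/(1-e^{-t})=1/s$ at $t=-\log(1-s)$, together with~\eqref{eq_R_intro}, one obtains the pivotal identity
\[
R_{Q(\rho)}(s) \;=\; R^{quant}_\rho\bigl(-\log(1-s)\bigr).
\]
Applying this for $\rho=\rho_1,\rho_2$ and $\rho_1\otimes\rho_2$, then invoking the linearization of $\otimes$ by $R^{quant}$ (Theorem~\ref{Theorem_linearization}) and of $\boxplus$ by $R$ (Definition~\ref{def_free}), yields
\[
R_{Q(\rho_1\otimes\rho_2)}(s) \;=\; R_{Q(\rho_1)}(s)+R_{Q(\rho_2)}(s) \;=\; R_{Q(\rho_1)\boxplus Q(\rho_2)}(s),
\]
whence $Q(\rho_1\otimes\rho_2)=Q(\rho_1)\boxplus Q(\rho_2)$.

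\emph{Injectivity and non-surjectivity.} Injectivity is immediate: $\rho$ is reconstructed from $Q(\rho)$ via $G_\rho=-\log(1-G_{Q(\rho)})$, using the principal branch (well-defined because $1-G_{Q(\rho)}$ lies in the open upper half-plane on $\mathbb{C}^+$), followed by Stieltjes inversion. For non-surjectivity it suffices to exhibit a single compactly supported probability measure whose formal preimage under the inverse formula fails to satisfy the domain hypotheses on $\rho$; carrying out such a verification explicitly is the one genuinely ad hoc piece of the argument. The main technical obstacle is the existence step: the density bound $\rho'\le 1$ is precisely what keeps $|\Im G_\rho|\le\pi$, so that $\exp(-G_\rho)$ does not wind around the origin and the Nevanlinna property of $G_{Q(\rho)}$ is preserved.
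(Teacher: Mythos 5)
Your treatment of existence, intertwining, and injectivity is correct, and the existence argument takes a genuinely different route from the paper. The paper obtains $Q(\rho)$ either by approximating $\rho$ by counting measures $m^A[\lambda(N)]$ of signatures and passing to the limit of the associated Perelomov--Popov measures via Theorem \ref{theorem_moment_link_to_PP} (which rests on the explicit moment identities of Perelomov, Popov), or alternatively by reducing $Q$ to the classical Markov--Krein correspondence through \eqref{eq_link_to_Markov_Krein}. Your complex-analytic argument --- rewriting the moment identity as $G_{Q(\rho)}=1-\exp(-G_\rho)$, using the density bound to force $\Im(-G_\rho)\in[0,\pi]$ on $\mathbb{C}^+$ so that $1-e^{-G_\rho}$ maps $\mathbb{C}^+$ to the closed lower half-plane, and invoking the Nevanlinna representation --- is self-contained, isolates exactly where the hypothesis ``density bounded by $1$'' enters, and avoids both the representation-theoretic approximation and the citation of the Ahiezer--Krein theory. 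The identity $R_{Q(\rho)}(s)=R^{quant}_{\rho}(-\log(1-s))$ you derive is precisely the change of variables $s=1-e^{-z}$ that the paper uses at the end of the proof of Theorem \ref{Theorem_linearization}, so the intertwining step coincides with the paper's (and, like the paper's, it relies on Theorem \ref{Theorem_linearization} for the additivity of $R^{quant}$ under $\otimes$).

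There is, however, a genuine gap: non-surjectivity is one of the assertions of the theorem, and you do not prove it --- you only remark that it suffices to exhibit a measure outside the image and defer the verification. This omission is not routine. Note for instance that $Q(u[a,a+1])=\delta_a$ for every $a\in\mathbb{R}$ (the moments of $u[a,a+1]$ give $\exp(-\sum_k s_k z^{k+1})=\frac{1-(a+1)z}{1-az}=1-\sum_k a^k z^{k+1}$), so all point masses already lie in the image and a counterexample must be chosen with some care. The paper exhibits a concrete candidate $\eta$ (density $-x$ on $[-1/2,0]$ plus an atom at $-10$, Lemma \ref{lemma_not_sur}) and rules it out via the reduction \eqref{eq_link_to_Markov_Krein} to the Markov--Krein correspondence together with Theorem \ref{theorem_Markov_Krein}. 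Within your framework the natural way to close the gap is to use your own inversion formula: produce a compactly supported probability measure $\nu$ for which the Stieltjes inversion of $-\log(1-G_\nu)$ fails to yield a positive measure with density at most $1$, and verify this explicitly. Until such an example is carried out, the ``not surjective'' clause remains unproved.
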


The non-trivial part of Theorem \ref{theorem_intertwining} is the existence of the
map $Q(\cdot)$, while the intertwining property is a simple corollary of the
definitions of functions $R_\mes(z)$ and $R^{quant}_\mes(z)$. One way to prove the
existence of $Q(\rho)$ (see Theorem \ref{theorem_moment_link_to_PP}) is through the
limit transition in the formulas of \cite{PP}, \cite{Popov}, \cite{Popov2} linking
the moments of counting measures $m[\lambda]$ with those of Perelomov--Popov
measures $m_{PP}[\lambda]$.

The operation $\rho\mapsto Q(\rho)$ is a close relative of the Markov--Krein
correspondence. In the context of the asymptotic representation theory of symmetric
groups this correspondence was introduced and studied by Kerov (see \cite[Chapter
IV]{Kerov-book}), but its origins go back to the Hausdorff moment problem and Markov
moment problem (a recent review can be found in \cite{DF}). The former asks about
necessary and sufficient conditions for a sequence $\{a_k\}_{k=0,1,\dots}$ to be a
sequence of moments of a measure. And the latter asks about the necessary and
sufficient conditions for a sequence $\{b_k\}_{k=0,1,\dots}$ to be a sequence of
moments of a measure, which is absolutely continuous with respect to the Lebesgue
measure and whose density is bounded by $1$. The relation between these two problems
is explained in the following theorem.

\begin{theorem}[Ahiezer--Krein \cite{AK}, Krein--Nudelman \cite{KN}] \label{theorem_Markov_Krein}
Let $\rho$ be a finite measure with compact support on $\mathbb R$, which is
absolutely continuous with respect to the Lebesgue measure and whose density is
bounded by $1$. Then there exists a finite measure $MK(\rho)$ with compact support
on $\mathbb R$ such that
\begin{equation}
\label{eq_Markov_Krein}
 \exp\left(\sum_{k=0}^{\infty} b_k z^{k+1}\right)=1+\sum_{k=0}^{\infty} a_k z^{k+1},
\end{equation}
where $\{a_k\}_{k=0,1,\dots}$ and $\{b_k\}_{k=0,1,\dots}$ are the moments of
$MK(\rho)$ and $\rho$, respectively. Moreover, $MK(\cdot)$ is a bijection, i.e.\ for
any finite measure $\nu$ there exists a unique $\rho$ such that $MK(\rho)=\nu$.
\end{theorem}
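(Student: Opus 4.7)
The plan is to translate the moment identity \eqref{eq_Markov_Krein} into an equivalent statement about Cauchy--Stieltjes transforms. Writing $G_\mu(w):=\int(w-x)^{-1}d\mu(x)$, which admits the asymptotic expansion $\sum_{k\ge0}m_k(\mu)\,w^{-k-1}$ as $w\to\infty$ (where $m_k$ denotes the $k$-th moment), the substitution $z=1/w$ in \eqref{eq_Markov_Krein} yields the equivalent identity
\[
G_{MK(\rho)}(w)\;=\;\frac{\exp\bigl(G_\rho(w)\bigr)}{w},
\]
viewed as analytic functions on $\mathbb{C}\setminus[0,C]$. The theorem then reduces to showing that this identity sets up a bijection between the two stated classes of measures.

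For the existence direction, I would take $\rho$ with density $f\in[0,1]$ on $[0,C]$, set $G(w):=e^{G_\rho(w)}/w$, and recognize it as the Cauchy transform of a probability measure on $[0,C]$ via the Nevanlinna/Herglotz criterion. Analyticity of $G$ on $\mathbb{C}\setminus[0,C]$ and the normalization $wG(w)\to 1$ at infinity are immediate. The essential step is $G(\mathbb{C}^+)\subset\mathbb{C}^-$. Since $\arg G(w)=\operatorname{Im} G_\rho(w)-\arg w$ modulo $2\pi$, this reduces to the inequality $\arg w+|\operatorname{Im} G_\rho(w)|<\pi$ for every $w=u+iy$ with $y>0$, which I would prove by the explicit calculation
\[
|\operatorname{Im} G_\rho(w)|\;=\;y\int_0^C\frac{f(x)\,dx}{(x-u)^2+y^2}\;\le\;\int_0^C\frac{y\,dx}{(x-u)^2+y^2}\;=\;\arctan\tfrac{C-u}{y}+\arctan\tfrac{u}{y},
\]
combined with $\arg w=\pi/2-\arctan(u/y)$, which telescopes to $\arg w+|\operatorname{Im} G_\rho(w)|\le\pi/2+\arctan((C-u)/y)<\pi$. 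A continuity argument anchored at $w=iy$, $y\to\infty$, where $G\to 1/(iy)$ and $\arg G\to -\pi/2$, then pins $\arg G$ to $(-\pi,0)$ everywhere on $\mathbb{C}^+$.

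For bijectivity, injectivity is immediate: the moments of $MK(\rho)$ determine those of $\rho$ via the formal series inversion $B(z)=\log A(z)$, and a compactly supported measure is determined by its moments. For surjectivity, given a probability measure $\nu$ on $[0,C]$ (with the degenerate case $\nu=\delta_0$ trivially matched by $\rho=0$), I would reverse the construction by defining $G_\rho(w):=\log(wG_\nu(w))$ on $\mathbb{C}\setminus[0,C]$. The computation
\[
\operatorname{Im}\bigl(wG_\nu(w)\bigr)\;=\;-y\int_0^C\frac{t\,d\nu(t)}{(u-t)^2+y^2}\;<\;0\qquad\text{on }\mathbb{C}^+,
\]
which uses $\operatorname{supp}\nu\subset[0,\infty)$ in an essential way, places $wG_\nu$ in $\mathbb{C}^-$, so the principal logarithm is unambiguous; and $wG_\nu\to 1$ at infinity forces zero winding around $0$, making $G_\rho$ single-valued analytic off $[0,C]$. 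Stieltjes inversion then yields a measure $\rho$ with density $f(x)=-\pi^{-1}\operatorname{Im} G_\rho(x+i0^+)\in[0,1]$ supported on $[0,C]$, and $MK(\rho)=\nu$ holds by construction. The technical heart of the argument is the arctangent inequality in the existence step: it is precisely the complex-analytic shadow of the density constraint $f\le 1$, and without it the candidate $G=e^{G_\rho}/w$ need not be a Cauchy transform at all.
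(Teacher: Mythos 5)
Your proof is correct, but note that the paper itself does not prove this statement: it is quoted as a classical theorem of Ahiezer--Krein and Krein--Nudelman, with a pointer to \cite[p.~71, formula (11''')]{AK}, and the authors only use it (in Lemma \ref{lemma_not_sur} and in formula \eqref{eq_link_to_Markov_Krein}) as a black box. So there is nothing in the paper to compare against line by line; what you have supplied is a self-contained complex-analytic proof in the spirit of Kerov's treatment of the Markov--Krein correspondence, whereas the cited sources argue via the Markov moment problem and Chebyshev-type extremal problems. Your route is sound: the translation of \eqref{eq_Markov_Krein} into $G_{MK(\rho)}(w)=e^{G_\rho(w)}/w$ is exactly right (substitute $z=1/w$ and multiply by $w$), the arctangent estimate correctly isolates where the hypothesis $f\le 1$ enters, and the argument that $G$ is never real on $\mathbb{C}^+$ together with the anchor at $w=iy$ legitimately pins $G(\mathbb{C}^+)\subset\mathbb{C}^-$. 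Two points you pass over quickly but which are genuinely standard and do hold: (i) in the existence step, support containment in $[0,C]$ also needs $G$ to be \emph{real} on $\mathbb{R}\setminus[0,C]$ (immediate, since $G_\rho$ is real there), not just analytic; (ii) in the surjectivity step, the passage from $-\pi<\operatorname{Im}G_\rho<0$ on $\mathbb{C}^+$ to ``$\rho$ is absolutely continuous with density in $[0,1]$'' uses the standard fact that a Nevanlinna function with uniformly bounded imaginary part represents an absolutely continuous measure with correspondingly bounded density; this deserves an explicit citation or a one-line Fatou/dominated-convergence argument in the Stieltjes inversion. With those remarks your argument is complete and, arguably, more transparent for the reader of this paper than the moment-problem machinery of \cite{AK}, \cite{KN}, since it runs in the same Cauchy-transform language as Lemmas \ref{Lemma_consistency} and \ref{Lemma_consistency_BCD}.
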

Comparing Theorems \ref{theorem_intertwining} and \ref{theorem_Markov_Krein} one
immediately sees that
\begin{equation}
\label{eq_link_to_Markov_Krein}
 Q(\rho)=\bigl( (-)^* \circ MK  \circ(-)^*\bigr)(\rho),
\end{equation}
where $(-)^*$ is the reflection of a measure with respect to the origin, i.e.\ for any measurable
$A$
$$
 (-)^*(\rho) (A)=\rho(-A),\quad A\subset \mathbb R,
$$
The formula \eqref{eq_link_to_Markov_Krein} reduces $Q(\cdot)$ to $MK(\cdot)$. In
particular, this gives another way to prove Theorem \ref{theorem_intertwining}. In
addition it shows that the correspondence $\rho\mapsto Q(\rho)$ is bijective.

\smallskip

We should note that although Theorem \ref{theorem_intertwining} formally reduces the
quantized free convolution to the conventional free convolution, it still makes
sense to distinguish these two operations because due to complexity of
\eqref{eq_Markov_Krein}, the Markov--Krein correspondence $MK(\cdot)$ is a very
non-trivial and highly non-linear operation on measures (see, however, \cite[Chapter
IV, Section 4]{Kerov-book} where an elegant probabilistic algorithm for sampling
from $MK(\rho)$ is proposed).

\subsection{Our methods}

There are three main ingredients in the proofs of the results of the present article.

The first one is the method of analysis of the measures appearing in the decomposition of
representations into irreducible components using the application of relatively simple
differential operators to the characters of these representations. One way to view the operators
we use is that they are \emph{radial parts} of the differential operators in the centers of
universal enveloping algebras for classical Lie groups. One very important feature that we observe
here is that for the asymptotic analysis we need only the values of characters and their
derivatives with all but finitely many variables set to $1$.

The second ingredient is the asymptotic expansion for the characters of classical Lie groups as the
rank of the group goes to infinity obtained by one of the authors and Panova in \cite{GP} (and
which is a generalization of earlier results of Guionnet and Maida \cite{GM} on matrix integrals).
In particular, for symplectic and orhtogonal groups we use an interesting finite $N$ relation
between their normalized characters and those for $U(N)$ (see Propositions
\ref{Proposition_Schur_Symplectic_1}-\ref{Proposition_Schur_Ortho_2}).

Finally, our analysis of Perelomov--Popov measures also uses the formulas of \cite{PP},
\cite{Popov}, \cite{Popov2} relating the moments of these measures to the moments of counting
measures.

\subsection{Acknowledgements}

We would like to thank Alexei Borodin and Grigori Olshanski for many valuable discussions. A.~B.\
was partially supported by Simons Foundation-IUM scholarship, by ``Dynasty'' foundation, by
Moebius Foundation for Young Scientists and by the RFBR grant 13-01-12449.  V.~G.\ was partially
supported by the RFBR-CNRS grant 11-01-93105.

\section{Setup and results}

\label{Section_Setup}

\subsection{Preliminaries}
Let $G(N)$ be one of the classical real Lie groups of rank $N$, i.e.\ $G(N)$ is either unitary
group $U(N)$ or orthogonal group $SO(2N)$, or orthogonal group $SO(2N+1)$, or symplectic group
$Sp(2N)$. These groups correspond to the root systems $A$, $D$, $B$ and $C$, respectively, and we
will use both groups and root systems in our notations. Thus, the letter $G$ should be also
understood as either $A$, $B$, $C$ or $D$.

Irreducible representations of $G(N)$ are parameterized by their highest weights, which are
\emph{signatures}, i.e.\ $N$--tuples of integers $\lambda_1\ge\lambda_2\ge\dots\ge\lambda_N$. When
$G(N)=Sp(2N)$ or $SO(2N+1)$, one should also assume $\lambda_N\ge 0$; when $G(N)=SO(2N)$, last
coordinate $\lambda_N$ can be negative, but $\lambda_{N-1}\ge|\lambda_N|$, see e.g.\ \cite{Zh},
\cite{FH}. Let $\hG(N)$ denote the set of signatures parameterizing irreducible representations of
$G(N)$ and let $\pi^\lambda$, $\lambda\in\hG(N)$ denote the irreducible representation
corresponding to $\lambda$.

It is convenient for us to encode signatures by probability measures on $\mathbb R$. We will use
two different sets of measures.

\begin{definition}
The counting measure $m^G[\lambda]$ corresponding to a signature
$\lambda=(\lambda_1\ge\lambda_2\ge\dots\ge\lambda_N)$ is defined through
$$
 m^A[\lambda]=\frac{1}{N}\sum_{i=1}^N \delta\left(\frac{\lambda_i+N-i}N\right)
$$
for the unitary groups and for $G=B,C,D$ we set
$$
 m^G[\lambda]=\frac{1}{2N} \sum_{i=1}^N \left( \delta\left(\frac{\lambda_i+2N-i}{2N}\right) +
 \delta\left(\frac{i-\lambda_i}{2N}\right)\right).
$$
\end{definition}
\noindent {\bf Remark.} In principle, we could have kept the same definition of the counting
measure for all root systems. However, our current definition is consistent with the lozenge
tilings interpretations of Section \ref{Section_tilings}. Also this definition makes the statement
of Theorem \ref{Theorem_linearization} independent of the root system.

\smallskip

The definition of the \emph{Perelomov--Popov measure} $m^G_{PP}[\lambda]$ is a bit more delicate.
For a signature $\lambda$ let $\lambda^{(i+)}$ ($\lambda^{(i-)}$), $i=1,\dots,N$ denote the
sequence of integers obtained from $\lambda$ by increasing (decreasing) the $i$th coordinate by
$1$. Note that $\lambda^{(i\pm)}$ might be not a signature. Let $\dim(\lambda)$, $\lambda\in\hG(N)$
denote the dimension of the irreducible representation $\pi^\lambda$ and let
$\dim({\lambda^{(i\pm)}})$ be the dimension of $\pi^{\lambda^{(i\pm)}}$ if $\lambda^{(i\pm)}$ is a
signature, and $0$ otherwise.

\begin{definition} \label{Definition_PP} The Perelomov--Popov measure $m^G_{PP}[\lambda]$ corresponding to $\lambda\in\hG(N)$ is defined through
$$
 m_{PP}^A[\lambda]=\frac{1}{N}\sum_{i=1}^N \frac{\dim(\lambda^{(i-)})}{\dim(\lambda)}
 \delta\left(\frac{\lambda_i+N-i}{N}\right),
$$
\begin{multline*}
m_{PP}^{B}[\lambda] = \frac{1}{2N+1} \Biggl[ \sum_{i=1}^{N} \Biggl( \frac{\dim
({\lambda^{(-i)}})}{\dim ({\lambda})} \delta\left(\frac{ \lambda_i +2N-i}{2N+1}\right) \\+
\frac{\dim ({\lambda^{(+i)}})}{\dim ({\lambda})} \delta\left(\frac{ i-1-\lambda_i}{2N+1}\right)
\Biggr) + \delta\left(\frac{N}{2N+1}\right) \Biggr],
\end{multline*}
$$
m_{PP}^{C}[\lambda] = \frac{1}{2N} \sum_{i=1}^{N} \left( \frac{\dim ({\lambda^{(-i)}})}{\dim
({\lambda})} \delta\left(\frac{ \lambda_i + 2N+1 - i}{2N}\right) + \frac{\dim
({\lambda^{(+i)}})}{\dim ({\lambda})} \delta\left(\frac{i-1 -\lambda_{i}}{2N}\right) \right),
$$
$$
m_{PP}^{D}[\lambda] = \frac{1}{2N} \sum_{i=1}^{N} \left( \frac{\dim ({\lambda^{(-i)}})}{\dim
({\lambda})} \delta\left(\frac{ \lambda_i +2N-1-i}{2N}\right) + \frac{\dim ({\lambda^{(+i)}})}{\dim
({\lambda})} \delta\left(\frac{i-1 -\lambda_i}{2N}\right) \right).
$$
\end{definition}

{\bf Remark.} Using the Weyl dimension formula the constants in the definition of the
Perelomov--Popov measure can be computed. The result for different groups is similar. For instance,
\begin{equation}
\label{eq_PP_main_text}
 m_{PP}^A[\lambda]=\frac{1}{N}\sum_{i=1}^N \left(\prod_{j\ne i} \frac{(\lambda_i-i)-(\lambda_j-j)-1}{(\lambda_i-i)-(\lambda_j-j)}\right) \delta\left(\frac{\lambda_i+N-i}N\right).
\end{equation}

\smallskip

Definition \ref{Definition_PP} is motivated by the work of Perelomov and Popov \cite{PP} on the
center of the universal enveloping algebra of semisimple Lie groups. They produced a distinguished
set of elements $C_p^G$, $p=1,2,\dots$ which generate the center of the universal enveloping
algebra of $G$. These elements were further used by several authors, see e.g.\
\cite{Olshanski-Nazarov-Molev}, \cite{Molev-book}, \cite{Gould}, \cite{Biane_U}, \cite{Biane_S},
\cite{CS}. Since each $C_p^G$ belongs to the center of the universal enveloping algebra, it acts as
a \emph{constant} $C_p^G[\lambda]$ in an irreducible representation parameterized by $\lambda$. The
relation between $C_p^G[\lambda]$ and the measures $\mu_{PP}^G[\lambda]$ is explained in the
following theorem.

\begin{theorem}[{\cite[Eq.\ 71]{PP}}] For $G$ being either unitary, orthogonal or symplectic group, we have
 $$
  C_p^G[\lambda]=(\hat N)^{p+1} \int_{\mathbb R}   x^p\, m^G_{PP}[\lambda](dx),
 $$
 where $\hat N=N$ for the unitary group $U(N)$, $\hat N=2N$ for $Sp(2N)$ and $SO(2N)$, $\hat N=2N+1$
 for $SO(2N+1)$.
\end{theorem}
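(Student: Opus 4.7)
The plan is to reduce the theorem to a direct identification of the coefficients in Definition~\ref{Definition_PP} with those in the explicit eigenvalue formulas of [PP, Eq.~71]. That reference gives, for each classical group, an explicit formula for $C_p^G[\lambda]$ as a sum over indices of a weight times a $p$-th power of a linear shift of $\lambda_i$; for $G=A$ this is \eqref{eq_definition_PP_elements}. Our measure $m^G_{PP}[\lambda]$ places atoms at precisely these shifted coordinates divided by $\hat N$, with weights $\dim(\lambda^{(i\pm)})/\dim(\lambda)$. After pulling out the factor $\hat N^{p+1}$ from the $p$-th moment (one $\hat N$ from the prefactor $1/\hat N$ in front of the sum, and $\hat N^p$ from the denominator inside the delta function), the claim becomes the purely algebraic identity that these dimension ratios equal the coefficient functions in [PP, Eq.~71].

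I would start with $G=A$. Setting $l_j := \lambda_j - j$, the Weyl dimension formula \eqref{eq_Weyl_dimension} gives $\dim(\lambda)$ proportional to $\prod_{a<b}(l_a-l_b)$. Replacing $\lambda_i$ by $\lambda_i - 1$ changes only $l_i$ (to $l_i - 1$), so every factor of the ratio $\dim(\lambda^{(i-)})/\dim(\lambda)$ not involving the index $i$ cancels, and each remaining factor simplifies to $(l_i - l_j - 1)/(l_i - l_j)$ after a sign cancellation in the terms with $a < i$. The resulting product matches the coefficient in \eqref{eq_PP_main_text}, so the identity for $G = A$ is immediate.

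For $G = B, C, D$ the same strategy applies with the appropriate Weyl dimension formula, which contains products over $i < j$ involving both $(l_i - l_j)$- and $(l_i + l_j + c_G)$-type factors, together with (for $B$ and $C$) a single-coordinate factor depending on $\lambda_i$ alone. A $\pm 1$ shift of $\lambda_i$ changes each of these factors in a controlled way, producing a closed-form expression for $\dim(\lambda^{(i\pm)})/\dim(\lambda)$ which one matches term-by-term against the corresponding formula in [PP, Eq.~71]. The isolated atom $\delta(N/(2N+1))$ of weight $1/(2N+1)$ appearing in $m^B_{PP}[\lambda]$ is designed to absorb the ``middle coordinate'' contribution in the $SO(2N+1)$ formula, where no nontrivial dimension ratio is present.

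The main obstacle is correctly handling the boundary cases where $\lambda^{(i\pm)}$ fails to be a signature: decrementing $\lambda_N$ past $0$ in $B, C$, or violating $\lambda_{N-1} \ge |\lambda_N|$ in $D$. In these situations our convention sets $\dim(\lambda^{(i\pm)})=0$, and one must verify that the corresponding factor in the Weyl-formula product also vanishes, so that no boundary correction is required. Once this bookkeeping is settled for each of the four root systems, summing the weighted $p$-th powers and multiplying by $\hat N^{p+1}$ reproduces exactly the [PP] expression for $C_p^G[\lambda]$, completing the proof.
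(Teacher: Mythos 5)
First, a remark on the comparison itself: the paper does not prove this statement. It is imported from \cite{PP} (Eq.~71), and the only internal evidence the paper offers is the Remark containing \eqref{eq_PP_main_text}, which records the outcome of precisely the computation you propose for type $A$. So your proposal is not an alternative route but a filling-in of a citation, and its core is sound. Your type-$A$ argument is correct and complete: with $l_j=\lambda_j-j$, the Weyl dimension formula \eqref{eq_Weyl_dimension} gives $\dim(\lambda^{(i-)})/\dim(\lambda)=\prod_{j\ne i}\bigl((l_i-l_j-1)/(l_i-l_j)\bigr)$ after the sign cancellation you describe, the bookkeeping of the factor $\hat N^{p+1}$ against the $1/\hat N$ prefactor and the $\hat N^p$ in the denominators of the atoms is right, and the matching with \eqref{eq_definition_PP_elements} is immediate. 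The same is true of the boundary check for $A$, $C$ and $D$: in every case where $\lambda^{(i\pm)}$ fails to be a signature for these three root systems, the shifted weight lands on a wall (an adjacent equality $l_i\mp1=l_{i\pm1}$, the vanishing of the single-coordinate factor $l_N'=0$ for $Sp(2N)$, or a coincidence $l^2$-factor for $SO(2N)$), so the Weyl product does vanish as you claim.

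The gap is in the $B$ case. For $SO(2N+1)$ with $\lambda_N=0$, decrementing the last coordinate gives $l_N'=\lambda_N-1+1/2=-1/2$, and the Weyl dimension polynomial evaluated there equals $-\dim(\lambda)$, \emph{not} zero: the reflected weight under the sign change $l_N\mapsto -l_N$ is $\lambda$ itself, which is regular. So the convention $\dim(\lambda^{(N-)})=0$ of Definition~\ref{Definition_PP} does not agree with the analytically continued Weyl product, and your blanket claim that ``the corresponding factor in the Weyl-formula product also vanishes'' fails exactly here. Worse, in this same situation the decrement atom sits at $(\lambda_N+2N-N)/(2N+1)=N/(2N+1)$, i.e., it collides with the distinguished atom of $m^B_{PP}[\lambda]$, and the product formula of \cite{PP} for the coefficient of that position degenerates (two of the $2N+1$ positions coincide, producing a vanishing denominator). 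The term-by-term matching you describe therefore breaks down at this configuration; one must either regroup the $i=N$ decrement term with the zero-weight term and pass to a limit, or establish the identity for $\lambda_N>0$ and extend by polynomiality in the coordinates of $\lambda$. Until that is done, the $SO(2N+1)$ case of your argument is incomplete.
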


In particular, the definitions of \cite{PP} imply that $C_0^G$ is just an identical operator in
each representation and, thus, $m^G_{PP}[\lambda]$ is a probability measure. This can be also
checked independently.
\begin{proposition}
 For any signature $\lambda\in\hG(N)$, both $m^G[\lambda]$ and $m^{G}_{PP}[\lambda]$ are probability measures on
 $\mathbb R$.
\end{proposition}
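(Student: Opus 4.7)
The counting measures are probability measures essentially by inspection: $m^A[\lambda]$ is a sum of $N$ Dirac masses of weight $1/N$, while for $G=B,C,D$ we have $2N$ Dirac masses of weight $1/(2N)$; all weights are manifestly non-negative and total mass is $1$. (If two of the atom locations happen to coincide, the resulting atom simply carries doubled weight, which does not affect normalization.) So the substantive work is the Perelomov--Popov case.

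For $m^G_{PP}[\lambda]$, non-negativity of every weight is immediate, since $\dim(\lambda^{(i\pm)})$ and $\dim(\lambda)$ are non-negative integers (with the convention $\dim(\lambda^{(i\pm)})=0$ when the perturbed tuple is not a valid signature). What needs proof is that the weights sum to $1$. I would handle type $A$ first. By \eqref{eq_PP_main_text}, setting $x_i:=\lambda_i - i$ (distinct integers, strictly decreasing), the total mass equals $\tfrac{1}{N}\sum_i \prod_{j\ne i}\tfrac{x_i-x_j-1}{x_i-x_j}$, so the identity to prove becomes $\sum_i \prod_{j\ne i}(x_i-x_j-1)/(x_i-x_j) = N$. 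Introduce the polynomial $P(y):=\prod_j(y-x_j)$; then $\prod_{j\ne i}(x_i-x_j)=P'(x_i)$, and isolating the factor $j=i$ in $P(x_i-1)=\prod_j(x_i-1-x_j)$ gives $\prod_{j\ne i}(x_i-x_j-1) = -P(x_i-1)$. Hence the left-hand side is $-\sum_i P(x_i-1)/P'(x_i)$. The rational function $R(y):=P(y-1)/P(y)$ admits the partial-fraction expansion $R(y) = 1 + \sum_i \frac{P(x_i-1)/P'(x_i)}{y-x_i}$, and comparing with the asymptotic expansion $R(y) = \prod_j(1-(y-x_j)^{-1}) = 1 - N/y + O(y^{-2})$ and matching $1/y$ coefficients yields $\sum_i P(x_i-1)/P'(x_i) = -N$, which gives the identity.

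For the series $B$, $C$, $D$ I would run the same scheme: the Weyl dimension formula provides analogous product expressions for each $\dim(\lambda^{(i\pm)})/\dim(\lambda)$ in terms of shifted coordinates, and the sum-to-one identity reduces to partial-fraction residue sums for a rational function built from a polynomial with the appropriate $\pm$ symmetry (plus a single extra atom at $\delta(N/(2N+1))$ in type $B$). The main difficulty will be bookkeeping sign changes and the additional short-root factors coming from the $\pm\lambda_i$ symmetry; beyond that the mechanism is identical to the $A$ case. A more economical alternative is to invoke the Perelomov--Popov theorem quoted immediately above: since $X_0 = \mathrm{Trace}(E^0) = \mathrm{Trace}(I)$ acts by the scalar $\hat N$ in every irreducible representation, $C_0^G[\lambda] = \hat N$, so the $p=0$ case of the theorem gives total mass $C_0^G[\lambda]/\hat N = 1$ in one line.
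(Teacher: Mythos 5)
Your proposal is correct, and for the substantive type $A$ identity it takes a genuinely different (though equally elementary) route from the paper. The paper observes that $\sum_{i} \prod_{j\ne i} \frac{(\lambda_i-i)-(\lambda_j-j)-1}{(\lambda_i-i)-(\lambda_j-j)} = \frac{1}{\mathbf{V}(\lambda)}\sum_i T_i(\mathbf{V}(\lambda))$, where $\mathbf{V}(\lambda)=\prod_{i<j}\bigl((\lambda_i-i)-(\lambda_j-j)\bigr)$ and $T_i$ lowers $\lambda_i$ by $1$; it then argues that $\sum_i T_i(\mathbf{V}(\lambda))$ is a skew-symmetric polynomial in the $\lambda_i-i$ of degree $N(N-1)/2$, hence proportional to $\mathbf{V}(\lambda)$, with the constant $N$ read off from leading terms. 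You instead prove the same identity $\sum_i \prod_{j\ne i}\frac{x_i-x_j-1}{x_i-x_j}=N$ by a partial-fraction expansion of $P(y-1)/P(y)$ with $P(y)=\prod_j(y-x_j)$, matching the coefficient of $1/y$ at infinity. Both arguments are short; the skew-symmetry argument is perhaps the more economical, while your residue computation has the advantage that matching higher-order coefficients of $P(y-1)/P(y)$ at infinity yields the full family of Perelomov--Popov moment identities relating $m_{PP}[\lambda]$ to $m[\lambda]$, not just the normalization. For $B,C,D$ both you and the paper leave the details as ``similar,'' which is acceptable. Finally, your one-line alternative via $C_0^G$ is precisely the observation the paper makes in the sentence immediately preceding the proposition (``the definitions of \cite{PP} imply that $C_0^G$ is just an identical operator \dots\ This can be also checked independently''), so the proposition's stated purpose is the independent check, and your main argument correctly supplies one.
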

\begin{proof}
 For $m^G[\lambda]$ this is immediate. For $m^A_{PP}[\lambda]$ note that
 $$
  \sum_{i=1}^N \prod_{j\ne i}
  \frac{(\lambda_i-i)-(\lambda_j-j)-1}{(\lambda_i-i)-(\lambda_j-j)}=\frac{1}{\mathbf{V}(\lambda)}
  \sum_{i=1}^N T_i(\mathbf{V}(\lambda)),
 $$
 where $\mathbf{V}(\lambda)=\prod_{i<j}
 ((\lambda_i-i)-(\lambda_j-j))$ and $T_i$ is the operator which decreases $\lambda_i$ by $1$.
 Moreover, $\sum_{i=1}^N T_i(\mathbf{V}(\lambda))$ is a skew-symmetric polynomial in $\lambda_i-i$ of degree
 $N(N-1)/2$, therefore, it is proportional to $\mathbf{V}(\lambda)$. Comparing the leading terms, we get $\sum_{i=1}^N
 T_i(\mathbf{V}(\lambda))=N \mathbf{V}(\lambda)$. The proof for $m^{B}_{PP}[\lambda]$, $m^{C}_{PP}[\lambda]$, $m^{D}_{PP}[\lambda]$
 is similar.
\end{proof}

\subsection{Main results}

\label{Section_results}

In this section we state the main results which yield that \emph{random} measures corresponding to
restrictions and tensor products of representations of $G(N)$ are asymptotically
\emph{deterministic}, thus showing a form of the Law of Large Numbers. The proofs are given in
Sections \ref{Section_asymptotics_of_measures}-\ref{Section_appendix}.

In our asymptotic results we are going to make the following technical assumption on the behavior
of signatures $\lambda(N)$ as $N$ becomes large. It is plausible that this assumption can be
weakened, but we do not address this question in the present article.

\begin{definition}
\label{definition_regularity}
 A sequence of signatures $\lambda(N)\in\hG(N)$ is called \emph{regular}, if there exists a
 piecewise--continuous function $f(t)$ and a constant $C$ such that
\begin{equation}
\label{eq_reg1}
 \lim_{N\to\infty} \frac{1}{N}\sum_{j=1\dots,N} \left|\frac{\lambda_j(N)}{N}-f(j/N)\right|=0
\end{equation}
and
\begin{equation}
\label{eq_reg2} \left|\frac{\lambda_j(N)}{N}-f(j/N)\right|<C,\quad \quad j=1,\dots,N,\quad
N=1,2,\dots.
\end{equation}
\end{definition}
\noindent{\bf Remark.} Informally, the condition \eqref{eq_reg1} means that scaled by $N$
coordinates of $\lambda(N)$ approach a limit profile $f$. The restriction that $f(t)$ is
piecewise--continuous is reasonable, since $f(t)$ is a limit of monotonous functions and, thus, is
monotonous (therefore, we only exclude the case of countably many points of discontinuity for $f$).
We use condition \eqref{eq_reg2} since it guarantees that all the measures which we assign to
signatures and their limits have (uniformly) compact supports --- thus, these measures are uniquely
defined by their moments.

\medskip

\begin{lemma}
 Suppose that $\lambda(N)\in\hG(N)$, $N=1,2,\dots$ is a regular sequence. Then the measures $m^G[\lambda(N)]$ and $m^G_{PP}[\lambda(N)]$ converge as $N\to\infty$
 (weakly and in the sense of moments)
  to probability measures with compact support.
\end{lemma}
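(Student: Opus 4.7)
The plan is to establish moment convergence for both measure families; since the atoms of $m^G[\lambda(N)]$ lie in a uniformly bounded interval by \eqref{eq_reg2}, and those of $m^G_{PP}[\lambda(N)]$ either coincide with them or are shifted by $O(1/N)$, weak convergence is equivalent to moment convergence and every subsequential limit is automatically a probability measure with compact support. The counting-measure case reduces to a direct Riemann-sum computation; the Perelomov--Popov case is reduced to it by a generating-function identity that represents the PP moments as a contour integral of a rational function built from the counting-measure atoms.

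\smallskip

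For the counting measure take $G = A$ (the other series are analogous) and compute the $k$-th moment
\[
\int x^k\,m^A[\lambda(N)](dx) \;=\; \frac{1}{N}\sum_{i=1}^N \left(\frac{\lambda_i(N)}{N} + 1 - \frac{i}{N}\right)^k.
\]
Using \eqref{eq_reg2} to bound all factors other than $(\lambda_i(N)/N - f(i/N))$, the difference between this sum and $\frac{1}{N}\sum_{i=1}^N (f(i/N) + 1 - i/N)^k$ is controlled by a constant depending only on $k$ and $C$ times $\frac{1}{N}\sum_i |\lambda_i(N)/N - f(i/N)|$, which vanishes by \eqref{eq_reg1}. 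The resulting Riemann sum converges to $\int_0^1 (f(t) + 1 - t)^k\,dt$, so $m^A[\lambda(N)]$ tends weakly to the push-forward of Lebesgue measure on $[0,1]$ under $t \mapsto f(t) + 1 - t$.

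\smallskip

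For the Perelomov--Popov measure with $G = A$, set $\ell_j = \lambda_j(N) - j + N$ and consider
\[
\Psi_N(z) \;=\; \prod_{j=1}^N \frac{z - \ell_j - 1}{z - \ell_j}.
\]
A direct residue computation at each pole $z = \ell_i$ gives $\Psi_N(z) = 1 - \sum_i w_i/(z - \ell_i)$, where $w_i$ are the PP weights of \eqref{eq_PP_main_text}. Fix a simple positively oriented contour $\gamma \subset \mathbb{C}$ at positive distance $d$ from the (uniformly bounded) set of possible atoms $\ell_j/N$. Changing variable $z = Nw$,
\[
\int x^k\, m^A_{PP}[\lambda(N)](dx) \;=\; \frac{1}{N}\sum_i w_i (\ell_i/N)^k \;=\; \oint_\gamma w^k\bigl(1 - \Psi_N(Nw)\bigr)\frac{dw}{2\pi\ii}.
\]
Expanding the logarithm,
\[
\log \Psi_N(Nw) \;=\; \sum_{j=1}^N \log\!\left(1 - \frac{1}{Nw-\ell_j}\right) \;=\; -G_N(w) \;+\; O(1/N)
\]
uniformly on $\gamma$, where $G_N(w) = \frac{1}{N}\sum_j(w - \ell_j/N)^{-1}$ is the Stieltjes transform of $m^A[\lambda(N)]$ and the higher-order terms are bounded by a geometric series with ratio $1/(Nd)$. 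By the previous step $G_N(w) \to G_\mu(w)$ uniformly on $\gamma$, so $\Psi_N(Nw) \to \exp(-G_\mu(w))$ and the contour integral converges. Its limit is a sequence of moments of a probability measure with support inside the same compact interval.

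\smallskip

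The three other series are handled identically: the PP weights for $B, C, D$ arise as residues of analogous rational functions built from the reflected/folded sequence $\{\ell_i\}\cup\{-\ell_i\}$ (augmented by a single extra point in the $B_N$ case), matching the folding already present in Definition \ref{Definition_PP}, and one repeats the contour argument with $G_N$ replaced by the Stieltjes transform of $m^G[\lambda(N)]$. The only substantive technical point is the uniform-in-$N$ control of the logarithmic expansion; this reduces to the choice of $\gamma$ combined with the uniform compact-support bound from \eqref{eq_reg2}, and I expect no deeper obstacles. I anticipate the main nuisance to be the bookkeeping in the $B, C, D$ cases rather than any genuinely new phenomenon.
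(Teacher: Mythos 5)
Your proof is correct, and for the Perelomov--Popov half it takes a genuinely different route from the paper. The paper's own proof is two lines: the counting-measure part is declared immediate from the definitions (your Riemann-sum computation is exactly what is meant), and the Perelomov--Popov part is deferred to Theorem \ref{theorem_moment_link_to_PP}, which in turn rests on the explicit polynomial identities of Perelomov--Popov and Popov (\cite{PP}, \cite{Popov}, \cite{Popov2}) expressing the moments $c_k$ of $m^G_{PP}[\lambda]$ through the moments $s_k$ of $m^G[\lambda]$ and then passing to the limit in those formulas. You instead reprove the limiting form of that identity from scratch: the observation that $\Psi_N(z)=\prod_j\frac{z-\ell_j-1}{z-\ell_j}=1-\sum_i w_i/(z-\ell_i)$ packages the type-$A$ Perelomov--Popov weights as residues, and your contour-integral plus $\log\Psi_N(Nw)=-G_N(w)+O(1/N)$ argument delivers precisely the relation $1-\sum_k c_k z^{k+1}=\exp(-\sum_k s_k z^{k+1})$ of Theorem \ref{theorem_moment_link_to_PP} without citing the classical formulas. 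What your approach buys is self-containedness and a transparent mechanism for why the exponential appears; what the paper's approach buys is brevity and uniform treatment of all four series, since the $B$, $C$, $D$ generating functions are taken ready-made from \cite{PP} rather than re-derived. The only place where your write-up is thinner than it should be is exactly that last point: for $B$, $C$, $D$ the rational function whose residues give the weights $\dim(\lambda^{(\pm i)})/\dim(\lambda)$ involves the folded set $\{\pm\ell_i\}$ together with extra elementary factors coming from the short/long roots in the Weyl dimension formula, and verifying that these extra factors contribute only $O(1/N)$ to the logarithm on the contour is the bookkeeping you defer; it does go through, but it is the one step you have asserted rather than checked.
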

\begin{proof}
 For measures  $m^G[\lambda(N)]$ this is immediate from the definitions. For measures
 $m^G_{PP}[\lambda(N)]$ this follows from Theorem \ref{theorem_moment_link_to_PP} below.
\end{proof}

Let $\lambda^1,\dots,\lambda^k$ be elements of $\hG(N)$ and let
$\pi^{\lambda^1},\dots,\pi^{\lambda^k}$ be the corresponding irreducible representations of $G(N)$.
For $\mu\in\hG(N)$ set
$$
 P^{\lambda^1,\dots,\lambda^k}(\mu)=\frac{c^{\lambda^1,\dots,\lambda^k}_{\mu}
 \dim^G(\mu)}{\dim^G(\lambda^1)\cdots\dim^G(\lambda^k)},
$$
where $\dim^G$ stays for the dimension of the corresponding irreducible representation and
$c^{\lambda^1,\dots,\lambda^k}_{\mu}$ is multiplicity of $\pi^\mu$ in the (Kronecker) tensor
product $\pi^{\lambda^1}\otimes\dots\otimes\pi^{\lambda^k}$. In other words, $P(\mu)$ is relative
dimension of the isotypic component $\mu$ in the tensor product. Since
$P^{\lambda^1,\dots,\lambda^k}(\mu)\ge 0$ and $\sum_\mu P^{\lambda^1,\dots,\lambda^k}(\mu)=1$,
these numbers define a probability measure on $\hG(N)$ which we denote
$\rho^{\lambda^1\otimes\cdots\otimes\lambda^k}$.

In a similar way, let $\lambda\in\hG(N)$ and $0<\alpha<1$. For $\mu\in\hG(\lfloor \alpha N\rfloor)$
define
$$
 P^{\alpha,\lambda}(\mu)=\frac{c^{\lambda}_{\mu}
 \dim^G(\mu)}{\dim^G(\lambda)},
$$
where $c^\lambda_\mu$ is multiplicity of $\pi^\mu$ in the restriction of $\pi^\lambda$ on
$G(\lfloor \alpha N\rfloor)\subset G(N)$ (embedded as the subgroup fixing last basis vectors). The
numbers $P^{\alpha,\lambda}(\mu)$ define a probability measure on $\hG(\lfloor \alpha N\rfloor )$
which we denote $\rho^{\alpha,\lambda}$.

Recall that each element $\lambda\in\hG(N)$ defines a probability measure $m^G[\lambda]$ on
$\mathbb R$. Thus, if $\lambda$ is random and distributed according to $\rho$, then $m^G[\lambda]$
becomes a \emph{random} probability measure on $\mathbb R$. Somewhat abusing the notations we
denote this random measure through $m^G[\rho]$. We similarly define $m^G_{PP}[\rho]$.

\begin{theorem}[Law of Large Numbers for counting measures]
\label{Theorem_main}
 Suppose that $\lambda^1(N),\dots,\lambda^k(N)\in\hG(N)$, $N=1,2,\dots$, are $k$ regular sequences of signatures
 such that
 $$
  \lim_{N\to\infty} m^G[\lambda^i(N)]=\mes^i,\quad i=1,\dots,k.
 $$
 Then as $N\to\infty$
\begin{itemize}
\item Random measures $m^G[\rho^{\lambda^1(N)\otimes\cdots\otimes\lambda^k(N)}]$ converge in the sense of moments, in probability to a
deterministic measure which we denote $\mes^1 \otimes \mes^2 \otimes \dots \otimes\mes^k$.
 \item Random measures  $m^G[\rho^{\alpha,\lambda^1(N)}]$ converge  in the sense of moments, in probability to a deterministic measure which we denote $pr^\otimes_\alpha(\mes^1)$.
 \end{itemize}
\end{theorem}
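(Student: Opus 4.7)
The plan is to convert the probabilistic Law of Large Numbers into an analytic statement about asymptotics of normalized characters and then to invoke the character expansion of \cite{GP}. For any finite-dimensional representation $\pi$ of $G(N)$, write $\widetilde{\chi}^{\pi} = \chi^{\pi}/\dim(\pi)$, so that $\widetilde{\chi}^{\pi} = \sum_\mu \rho^\pi(\mu)\widetilde{\chi}^{\pi^{\mu}}$ is a convex combination of normalized irreducible characters. For each $k\ge 0$ I will produce a differential operator $\mathcal{D}_k^G$ in finitely many variables $u_1,\dots,u_r$ (with $r$ independent of $N$) such that, for every regular sequence $\lambda(N)$,
$$
\bigl(\mathcal{D}_k^G\, \widetilde{\chi}^{\pi^{\lambda(N)}}\bigr)(1,\dots,1) = \int_{\mathbb{R}} x^k\, m^G[\lambda(N)](dx) + o(1),\quad N\to\infty.
$$
These operators are polynomial in the generators $u_i \partial_{u_i}$ and arise, in spirit, as radial parts of higher Casimirs; what matters for us is only their values at $u=1^N$. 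Applying $\mathcal{D}_k^G$ to $\widetilde{\chi}^{\pi}$ at $u=1$ then produces the expected value $\E\int x^k m^G[\rho^\pi](dx)$.

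The analytic input from \cite{GP} is that for a regular sequence $\lambda(N)$ with $m^G[\lambda(N)]\to\mes$ and for any fixed $\ell$ and any $u_1,\dots,u_\ell$ in a complex neighborhood of $1$,
$$
\lim_{N\to\infty} \frac{1}{N}\log \widetilde{\chi}^{\pi^{\lambda(N)}}(u_1,\dots,u_\ell,1^{N-\ell}) = \sum_{j=1}^{\ell} \H_G^{\mes}(u_j),
$$
with control on derivatives. For the tensor product $\pi(N)=\pi^{\lambda^1(N)}\otimes\cdots\otimes\pi^{\lambda^k(N)}$ we have $\widetilde{\chi}^{\pi(N)} = \prod_i \widetilde{\chi}^{\pi^{\lambda^i(N)}}$; applying $\mathcal{D}_k^G$ and using the Leibniz rule together with the expansion above produces a deterministic limit value, which we take by definition as the $k$th moment of $\mes^1\otimes\cdots\otimes\mes^k$. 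For the restriction $\rho^{\alpha,\lambda(N)}$, the character of $\pi^{\lambda(N)}$ restricted to $G(\lfloor\alpha N\rfloor)$ equals $\widetilde{\chi}^{\pi^{\lambda(N)}}$ with the last $N-\lfloor\alpha N\rfloor$ arguments set to $1$; now $\mathcal{D}_k^G$ acts on the first block of variables, and the same expansion, applied with $N' = \lfloor\alpha N\rfloor$ and an appropriate rescaling, yields the moments of $pr^\otimes_\alpha(\mes^1)$.

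To upgrade convergence of expectations to convergence in probability I will show that the variance of each moment $\int x^k m^G[\rho^{\pi(N)}](dx)$ vanishes. For this I will act on $\widetilde{\chi}^{\pi(N)}$ with two copies of $\mathcal{D}_k^G$ in disjoint groups of variables, which extracts $\E\bigl[(\int x^k m^G[\rho^{\pi(N)}](dx))^2\bigr]$. Because the \cite{GP} expansion is additive in $\ell$ and separated blocks of arguments contribute independently to leading order, the two actions asymptotically decouple and produce the square of the first moment, so the variance tends to zero.

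The main obstacle is this last concentration step: one must make the factorization rigorous by obtaining a uniform and quantitative version of the Gorin--Panova expansion, together with its mixed partial derivatives of bounded order, in a full complex neighborhood of the point $(1,\dots,1)$. A secondary technical point is producing $\mathcal{D}_k^G$ uniformly in the root system; here I expect the finite-$N$ identities relating normalized symplectic and orthogonal characters to unitary ones (Propositions \ref{Proposition_Schur_Symplectic_1}--\ref{Proposition_Schur_Ortho_2}) to reduce the construction of $\mathcal{D}_k^G$ for $G=B,C,D$ to the case $G=A$, so that the bulk of the analytic work is concentrated in the unitary setting.
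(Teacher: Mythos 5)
Your overall strategy coincides with the paper's: extract moments of $m^G[\rho^{\pi}]$ by applying differential operators to the normalized character of $\pi$, feed in the Gorin--Panova asymptotics $\frac1N\log\widetilde\chi^{\lambda(N)}(u_1,\dots,u_\ell,1^{N-\ell})\to\sum_j \H_\mes(u_j)$, use multiplicativity under tensor products and specialization of trailing variables for restrictions, and prove concentration by a second-moment computation. But the step you propose for the second moment fails as stated. Acting with two copies of $\mathcal D_k^G$ in \emph{disjoint groups of variables} does not produce $\E\bigl[\bigl(\int x^k\,m^G[\rho^{\pi(N)}](dx)\bigr)^2\bigr]$: irreducible characters are not eigenfunctions of such a product, so its value at $1^N$ is not $\sum_\lambda\rho(\lambda)\bigl(\tfrac1N\sum_i(\mu_i/N)^k\bigr)^2$. (Already for $k=1$, $N=2$, $\lambda=(1,0)$: the operator $(u_1\partial_{u_1})(u_2\partial_{u_2})$ applied to the normalized character $(u_1+u_2)/2$ vanishes at $(1,1)$, while the target second moment is $1/4$.) What the paper does is apply the full $N$--variable radial-part operator $\D^{G(N)}_k=\frac1{V^{G(N)}}\circ\sum_{i=1}^N(u_i\partial_{u_i})^k\circ V^{G(N)}$ \emph{twice}: since $\D^{G(N)}_k\chi_\lambda=(\sum_i\mu_i^k)\chi_\lambda$ exactly, the composition extracts the second moment exactly (Proposition \ref{proposition_moments_and_operators_A} with $m=2$), and only \emph{afterwards} does the Leibniz/support analysis via Lemma \ref{Lemma_Vandermonde_sum} show that the dominant terms are those whose two supports are disjoint, whence factorization and vanishing variance. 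Your ``disjoint blocks decouple'' intuition correctly describes the conclusion of that asymptotic analysis, but it is not a valid extraction mechanism. Relatedly, your operators should reproduce the moments exactly rather than up to $o(1)$: an unquantified $o(1)$ error per application is not obviously harmless when the quantity you ultimately need, $\E[X^2]-(\E X)^2$, is itself $o(1)$.

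A secondary gap concerns the non-unitary series. The finite-$N$ identities of Propositions \ref{Proposition_Schur_Symplectic_1}--\ref{Proposition_Schur_Ortho_2} reduce the single-variable \emph{character asymptotics} for $B,C,D$ to type $A$; they do not reduce the construction of the moment-extracting operators or the resulting moment formulas. For $G(N)\ne U(N)$ the paper must conjugate by the type $B,C,D$ Weyl denominators, pass to the variables $z_i=(u_i+u_i^{-1})/2$ via the identities \eqref{eq_x15_5} and \eqref{eq_x16}, and absorb the half-integer shifts into the symmetrized measures $\widehat m^G$; this yields a genuinely different limiting moment formula (Theorem \ref{theorem_moment_convergence_BCD}) and is not subsumed by the type $A$ analysis.
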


\begin{theorem}[Law of Large Numbers for Perelomov--Popov measures]
\label{Theorem_main_PP}
 Suppose that $\lambda^1(N),\dots,\lambda^k(N)\in\hG(N)$, $N=1,2,\dots$, are $k$ regular sequences of signatures,
 such that
 $$
  \lim_{N\to\infty} m^G_{PP}[\lambda^i(N)]=\mes^i_{PP},\quad i=1,\dots,k.
 $$
 Then as $N\to\infty$
\begin{itemize}
\item Random measures $m^G_{PP}[\rho^{\lambda^1(N)\otimes\cdots\otimes\lambda^k(N)}]$ converge  in the sense of moments,
in probability to a deterministic measure which we denote $\mes^1_{PP} \boxplus \mes^2_{PP} \boxplus \dots \boxplus \mes^k_{PP}$.
 \item Random measures  $m^G_{PP}[\rho^{\alpha,\lambda^1(N)}]$ converge  in the sense of moments,
 in probability to a deterministic measure which we denote $pr^\boxplus_\alpha(\mes^1_PP)$.
 \end{itemize}
\end{theorem}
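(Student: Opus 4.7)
The plan is to reduce Theorem \ref{Theorem_main_PP} to the counting-measure version (Theorem \ref{Theorem_main}), using the Perelomov--Popov identity $C_p^G[\lambda]=\hat N^{p+1}\int x^p\,m^G_{PP}[\lambda](dx)$ together with the intertwining map $Q$ of Theorem \ref{theorem_intertwining}. The key structural input is the finite-$N$ relation between the moments of $m^G[\lambda]$ and those of $m^G_{PP}[\lambda]$ coming from \cite{PP}, \cite{Popov}, \cite{Popov2} (made precise in the promised Theorem \ref{theorem_moment_link_to_PP}), which in the $N\to\infty$ limit collapses exactly onto the generating function identity $\exp(-\sum s_k z^{k+1})=1-\sum c_k z^{k+1}$ defining $Q$.

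First, I would start from the hypothesis $m^G_{PP}[\lambda^i(N)]\to\mes^i_{PP}$ together with regularity of $\lambda^i(N)$. The lemma just before Theorem \ref{Theorem_main} guarantees that both $m^G[\lambda^i(N)]$ and $m^G_{PP}[\lambda^i(N)]$ converge to probability measures with compact support; write the counting-measure limits as $\mes^i$, so that necessarily $\mes^i_{PP}=Q(\mes^i)$. Theorem \ref{Theorem_main} then applies and yields, as $N\to\infty$, convergence of $m^G[\rho^{\lambda^1(N)\otimes\cdots\otimes\lambda^k(N)}]$ to the deterministic measure $\mes^1\otimes\cdots\otimes\mes^k$, in the sense of moments, in probability.

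Next I would transfer this convergence from $m^G$ to $m^G_{PP}$. For each fixed $\mu\in\hG(N)$, the Perelomov--Popov formula (together with the explicit form of Definition \ref{Definition_PP}) expresses the $p$-th moment of $m^G_{PP}[\mu]$ as a polynomial $P_p$ in the first $p$ moments of $m^G[\mu]$, up to a correction that is $O(1/N)$ uniformly on the set of signatures whose scaled coordinates lie in a fixed compact window. Since the Littlewood--Richardson (respectively branching) rule confines the random $\mu$ appearing in $\rho^{\lambda^1(N)\otimes\cdots\otimes\lambda^k(N)}$ to a window of size $O(N)$ determined by the $\mes^i$'s, this uniform control is applicable to the random decomposition. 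Convergence in probability of the moments of $m^G[\rho^{\otimes}]$ to the moments of $\mes^1\otimes\cdots\otimes\mes^k$ therefore propagates, by continuity of the polynomials $P_p$, to convergence in probability of the moments of $m^G_{PP}[\rho^{\otimes}]$ to the moments of $Q(\mes^1\otimes\cdots\otimes\mes^k)$. By the intertwining property of Theorem \ref{theorem_intertwining} this equals $Q(\mes^1)\boxplus\cdots\boxplus Q(\mes^k)=\mes^1_{PP}\boxplus\cdots\boxplus\mes^k_{PP}$. The restriction statement is established by the same reduction applied to the second conclusion of Theorem \ref{Theorem_main}, which forces the definition $pr^\boxplus_\alpha := Q\circ pr^\otimes_\alpha\circ Q^{-1}$ on the image of $Q$.

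The main obstacle is the uniform control of the $O(1/N)$ remainders in the finite-$N$ Perelomov--Popov formula when $\mu$ varies over the random support of $\rho^{\lambda^1(N)\otimes\cdots\otimes\lambda^k(N)}$: one needs a priori bounds on $|\mu_j(N)|/N$ that hold simultaneously for all $\mu$ in this support with overwhelming probability, as well as bounds on the individual factors $\prod_{j\ne i}\frac{(\mu_i-i)-(\mu_j-j)-1}{(\mu_i-i)-(\mu_j-j)}$ appearing in \eqref{eq_PP_main_text}, where degenerating spacings can a priori cause moments of $m^G_{PP}[\mu]$ to blow up. These bounds must be extracted from the regularity assumption on the $\lambda^i(N)$ and from the support properties of tensor product decompositions. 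A secondary, purely bookkeeping obstacle is that the argument has to be run for all four families $G\in\{A,B,C,D\}$ using the corresponding definitions of $m^G$ and $m^G_{PP}$ together with the root-system versions of the Perelomov--Popov identities, but the conceptual structure is identical to the $A$ case.
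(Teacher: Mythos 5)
Your proposal follows essentially the same route as the paper: Theorem \ref{Theorem_main_PP} is deduced from Theorem \ref{Theorem_main} by transferring moment convergence through the Perelomov--Popov relation between the moments of $m^G[\mu]$ and $m^G_{PP}[\mu]$ (which the paper packages as Theorem \ref{theorem_moment_link_to_PP}), and then identifying the limit via the generating-function/intertwining identity. The main ``obstacle'' you flag is in fact absent: the relation of \cite{PP}, \cite{Popov}, \cite{Popov2} is an \emph{exact} finite-$N$ polynomial identity with coefficients $f_i(N)$ depending only on $N$ and not on the signature, so no uniform control of $O(1/N)$ remainders or of the spacing factors over the random support is needed.
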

\noindent{\bf Remark 1.} By the convergence ``in the sense of moments,
 in probability'' we mean that for each $n=0,1,2,\dots$ the $n$th moment of the random measure
 converges in probability to the $n$th moment of the deterministic limit measure. In our setting,
 this implies also weak convergence in probability.

\noindent{\bf Remark 2.} For simplicity we formulate the statements for the restrictions to
$G(M)\subset G(N)$, but one can readily produce similar results for the restrictions to $G(M)\times
G(N-M)\subset G(N)$.

 \smallskip

The operations on the measures which appear in Theorems \ref{Theorem_main} and
\ref{Theorem_main_PP} are best described using certain generating functions.

Given a probability measure $\mes$ with compact support
 set
 $$S_\mes(z)=z+M_1(\mes)z^2+z^3 M_2(\mes) +\dots$$
 to be the generating function of the moments of $\mes$: $M_k(\mes)=\int_{\mathbb R} x^k \mes(dx)$. Define $S_\mes^{(-1)}(z)$ to be the inverse series to $S_\mes(z)$, i.e.\ such that
 $$
  S_\mes^{(-1)}\bigl( S_\mes(z)\bigr)=S_\mes\bigl(S_\mes^{(-1)}(z)\bigr)=z.
 $$
Further, set
\begin{equation}
\label{eq_Voiculescu_R_disc}
 R^{quant}_\mes(z)= \frac{1}{S_\mes^{(-1)}(z)} -\frac{1}{1-e^{-z}}.
\end{equation}
\begin{equation}
\label{eq_Voiculescu_R}
 R_\mes(z)= \frac{1}{S_\mes^{(-1)}(z)} -\frac{1}{z}.
\end{equation}
Note that $R^{quant}_\mes(z)$ and $R_\mes(z)$ are power series in $z$. The function $R_\mes(z)$ is
well-known in the free probability theory under the name of \emph{Voiculescu $R$--transform}, cf.\
\cite{VDN}, \cite{NS}. Immediately from the definitions we have the following relation between our
functions:
$$
 R^{quant}_\mes(z)=R_\mes(z)+\frac{1}{z}-\frac{1}{1-e^{-z}}=R_\mes(z)-R_{u[0,1]}(z),
$$
where $u[0,1]$ is the uniform measure on the interval $[0,1]$.

\begin{theorem} \label{Theorem_linearization} In the notations of Theorems \ref{Theorem_main}, \ref{Theorem_main_PP} we have
\begin{equation}
\label{eq_linear_final_1}
 R^{quant}_{\mes^1 \otimes \mes^2 \otimes \dots \otimes\mes^k}(z)= R^{quant}_{\mes^1}(z)+\dots
 +R^{quant}_{\mes^k}(z),
\end{equation}
\begin{equation}
\label{eq_linear_final_2}
 R^{quant}_{pr^\otimes_\alpha(\mes)}(z)= \frac{1}{\alpha} R^{quant}_{\mes}(z),
\end{equation}
\begin{equation}
\label{eq_linear_final_1_PP}
 R_{\mes^1 \boxplus \mes^2 \boxplus \dots \boxplus\mes^k}(z)= R_{\mes^1}(z)+\dots
 +R_{\mes^k}(z),
\end{equation}
\begin{equation}
\label{eq_linear_final_2_PP}
 R_{pr^\boxplus_\alpha(\mes)}(z)= \frac{1}{\alpha} R_{\mes}(z).
\end{equation}
\end{theorem}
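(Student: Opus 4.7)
The plan is to derive all four identities by studying asymptotics of normalized characters of $G(N)$. Both tensor products and restrictions admit clean descriptions on the character level: $\chi^{\pi^\lambda \otimes \pi^\mu} = \chi^{\pi^\lambda} \cdot \chi^{\pi^\mu}$, while the character of $\pi^\lambda$ restricted from $G(N)$ to $G(M)$ is obtained by setting $N-M$ extra variables equal to $1$. Coupled with the Law of Large Numbers established in Theorems \ref{Theorem_main} and \ref{Theorem_main_PP}, this should reduce the proof of the formulas to a functional identity for asymptotic characters.

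Concretely, I would first invoke the asymptotic character formula of Gorin and Panova \cite{GP} (a generalization of Guionnet--Maida \cite{GM}): for a regular sequence $\lambda(N)\in\hG(N)$ with $m^G[\lambda(N)]\to \mes$ and any fixed $k\ge 1$, there exists a function $\Phi^G_\mes$ such that
$$\lim_{N\to\infty} \frac{\chi^G_{\lambda(N)}(u_1,\dots,u_k,1,\dots,1)}{\chi^G_{\lambda(N)}(1,\dots,1)} = \prod_{i=1}^{k}\Phi^G_\mes(u_i)$$
uniformly for $u_i$ in a small complex neighborhood of $1$. Rewriting the decomposition $\pi^{\lambda^1}\otimes\cdots\otimes\pi^{\lambda^k}=\bigoplus_\mu c_\mu \pi^\mu$ as an equality of normalized characters, applying this asymptotic to both sides, and using that the relative dimensions on the right concentrate on a deterministic measure (the LLN), one extracts
$$\log\Phi^G_{\mes^1\otimes\dots\otimes\mes^k}(u)=\log\Phi^G_{\mes^1}(u)+\dots+\log\Phi^G_{\mes^k}(u).$$
An analogous comparison for the restriction from $G(N)$ to $G(\lfloor\alpha N\rfloor)$, with the two sides interpreted at ranks $N$ and $\alpha N$ respectively and the $1/N$ versus $1/(\alpha N)$ normalizations of the counting measure tracked carefully, yields $\log\Phi^G_{pr^\otimes_\alpha(\mes)}(u)=\tfrac{1}{\alpha}\log\Phi^G_{\mes}(u)$.

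Next, I would identify $\log\Phi^G_\mes(e^x)$ with the generating function encoding $R^{quant}_\mes$. The integral representation of $\Phi^G_\mes$ produced by \cite{GP} is governed by the Cauchy transform of $\mes$; after a change of variables, its inverse gives exactly $1/S_\mes^{(-1)}(z)-1/(1-e^{-z})$. The shift $-1/(1-e^{-z})$, rather than the $-1/z$ of the classical Voiculescu $R$-transform, originates from the Weyl denominator $\prod_{i<j}(u_i-u_j)$ in the character formula, which in our linear scaling contributes a factor corresponding to the uniform measure on $[0,1]$, exactly as indicated in the discussion around \eqref{eq_R_intro}. Together with the additivity established above, this proves \eqref{eq_linear_final_1} and \eqref{eq_linear_final_2}. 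The Perelomov--Popov identities \eqref{eq_linear_final_1_PP} and \eqref{eq_linear_final_2_PP} then follow by invoking Theorem \ref{theorem_intertwining}: the formulas of \cite{PP}, \cite{Popov}, \cite{Popov2} identify $Q$ with the passage from counting to Perelomov--Popov moments, and $Q$ intertwines $\otimes$ with $\boxplus$, which on the level of $R$-transforms precisely removes the uniform-measure correction and reproduces classical additivity.

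The main obstacle is the explicit matching in the third step: the character asymptotic from \cite{GP} presents $\Phi^G_\mes$ as the solution of an implicit functional equation, and translating that equation into the inverse-series identity $R^{quant}_\mes(z)+1/(1-e^{-z})=1/S_\mes^{(-1)}(z)$ requires careful stationary-phase and residue bookkeeping. A secondary difficulty is uniformity in the root system: to handle $G=B,C,D$ one must exploit the finite-$N$ reductions of symplectic and orthogonal normalized characters to unitary ones (Propositions \ref{Proposition_Schur_Symplectic_1}--\ref{Proposition_Schur_Ortho_2}) so that the functions $\Phi^G_\mes$ across all four series can be read off from a common formula and the same $R^{quant}_\mes$ appears in every case.
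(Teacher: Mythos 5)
Your proposal follows essentially the same route as the paper's own proof: multiplicativity of characters under $\otimes$ and specialization of variables under restriction, combined with the Gorin--Panova character asymptotics (Theorem \ref{Theorem_character_asymptotics}) and the concentration results (Theorems \ref{theorem_moment_convergence}, \ref{theorem_moment_convergence_BCD} together with the consistency Lemmas \ref{Lemma_consistency}, \ref{Lemma_consistency_BCD}), identify the limiting profile $\H_\mes$ --- equivalently $R^{quant}_\mes$ --- as additive under tensor products and homogeneous of degree $1/\alpha$ under restriction, with the symplectic/orthogonal cases reduced to the unitary one exactly as you indicate. The only caution is that for \eqref{eq_linear_final_1_PP}--\eqref{eq_linear_final_2_PP} you should cite the moment relation of Theorem \ref{theorem_moment_link_to_PP} and its reformulation $R_{\mes_{PP}}(1-e^{-z})=R^{quant}_{\mes}(z)$ rather than the intertwining part of Theorem \ref{theorem_intertwining}, since the paper derives the latter from Theorem \ref{Theorem_linearization} itself; the substance of what you describe is precisely this non-circular computation.
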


In particular, Theorem \ref{Theorem_linearization} implies that the operations
 $(\mes^1,\dots,\mes^k)\to\mes^1 \boxplus \dots \boxplus\mes^k$ and $\mes\to pr^\boxplus_\alpha(\mes)$ are
 \emph{free convolution} and \emph{free projection} (or free compression with a free projector), respectively, cf.\ \cite{VDN}, \cite{NS}.

\section{Corollaries, connections and reformulations}

\label{Section_discussion}

The aim of this section is to link Theorems \ref{Theorem_main}, \ref{Theorem_main_PP},
\ref{Theorem_linearization} to three topics: asymptotics of operations on irreducible
representations of symmetric groups, random lozenge tilings, characters of the
infinite--dimensional unitary group $U(\infty)$.

\subsection{Symmetric groups}
\label{Section_symmetric}

As the reader might have noticed in Section \ref{Section_History}, the stories for symmetric and
unitary groups are parallel. Moreover, recently Borodin and Olshanski \cite{BO-US} explained how
the asymptotic representation theory corresponding to the infinite-dimensional unitary group
$U(\infty)$ can be degenerated into the one for $S(\infty)$. Thus, it comes as no surprise that
there exists a limit transition from the constructions of Section \ref{Section_Setup} into the
objects related to symmetric group that we will now describe.

Recall that irreducible representations of $S(n)$ are parameterized by partitions of $n$
(equivalently, Young diagrams with $n$ boxes). We need one particular way to associate a
probability measure on $\mathbb R$ to a Young diagram $\lambda$, which is known as \emph{Kerov's
transition measure} \cite{Ker93}, \cite{Kerov-book}. Its definition might look more complicated
than the ones we had for the unitary group, but it turns out to be very useful in various contexts,
 cf.\ \cite[Chapter IV]{Kerov-book}, \cite{Biane_S}, \cite{Olsh_Jack}, \cite{Buf12}. We rotate the Young diagram as shown in Figure \ref{Figure_Sym} and set
$x_i$, $i=1,\dots,k$ to be the the horizontal coordinates of its local minima (inner corners) and
$y_i$, $i=1,\dots,k-1$ to be the coordinates of its local maxima (outer corners).

\begin{figure}[h]
\begin{center}
 {\scalebox{1.5}{\includegraphics{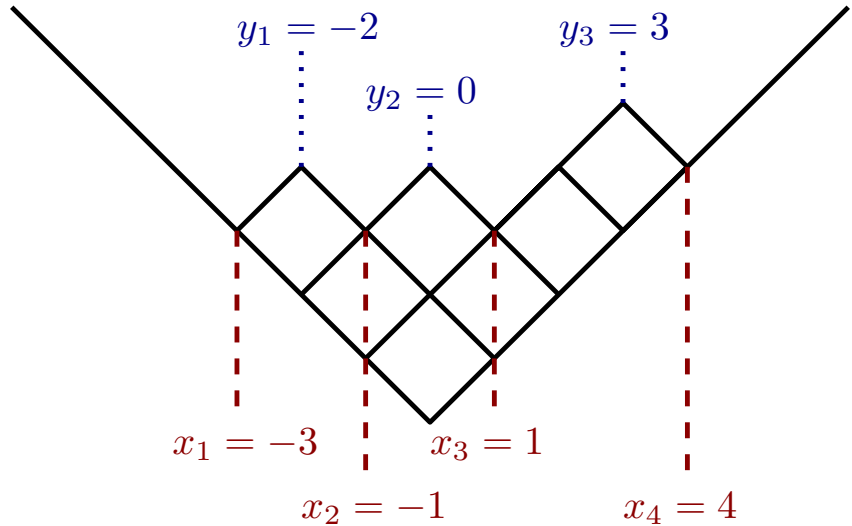}}}
 \caption{Young diagram with $7$ boxes and $4$ rows $(3,2,1,1)$, minima at points $-3,-1,1,4$ and maxima at points $-2,0,3$.
 \label{Figure_Sym}}
\end{center}
\end{figure}

\begin{definition}[Kerov]
 Transition measure $m_{Kerov}[\lambda]$ of Young diagram $\lambda$ is defined as
 $$
  m_{Kerov}[\lambda]=\sum_{i} \frac{\prod_j (x_i-y_j)}{\prod_{j\ne i} (x_i-x_j)}\delta(x_i).
 $$
\end{definition}
\noindent {\bf Remark.} The name for the measure comes from its relation to the Plancherel growth
model, see \cite[Chapter IV]{Kerov-book}.

\smallskip

The relation between Perelomov--Popov measure and Kerov transition measure is explained in the
following statement, in which $\overline {m}_{PP}^{A}[\lambda]$ stays for the pusforward of the
Perelomov--Popov measure under the map $x\mapsto Nx$ (in other words, we remove the denominator
$N$ in delta-functions in \eqref{eq_PP_main_text}.)

\begin{proposition}
\label{Proposition_to_Kerov}
 Take a Young diagram $\lambda$ with non-zero rows $\lambda_1\ge\lambda_2\ge\dots\ge\lambda_k>0$.
 For $N\ge k$ define a signature $\lambda(N)\in\widehat U(N)$ through
 $$
 \lambda(N)=(\underbrace{0,\dots,0}_{N-k},-\lambda_k,-\lambda_{k-1},\dots,-\lambda_1).
 $$
 Then
 $$
  \lim_{N\to\infty} \overline{m}^A_{PP}[\lambda(N)]=m_{Kerov}[\lambda].
 $$
\end{proposition}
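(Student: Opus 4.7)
\emph{Proof plan.} The cleanest route is through Stieltjes transforms. Set $\ell_i := \lambda(N)_i + N - i$ for the shifted coordinates; for the signature in the statement these split into a bulk block $A = \{k, k+1, \dots, N-1\}$ (from the zero entries) and a boundary block $B = \{i - 1 - \lambda_i : i = 1, \dots, k\}$ (from the $-\lambda_j$'s), and $A \cap B = \varnothing$ since $\lambda_k > 0$. Writing the Perelomov--Popov weights as $w_i = \prod_{j \ne i} \frac{\ell_i - \ell_j - 1}{\ell_i - \ell_j}$, the identity $w_i = -\operatorname{Res}_{z = \ell_i} \Phi_N(z)$ for
\begin{equation*}
\Phi_N(z) := \prod_{i=1}^N \frac{z - \ell_i - 1}{z - \ell_i},
\end{equation*}
together with $\Phi_N(\infty) = 1$ and partial fractions, yields the compact formula
\begin{equation*}
G_N(z) := \int \frac{\overline{m}^A_{PP}[\lambda(N)](dx)}{z - x} = \frac{1 - \Phi_N(z)}{N}.
\end{equation*}

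Next, the bulk factor in $\Phi_N$ telescopes: $\prod_{j = k}^{N-1} \frac{z - j - 1}{z - j} = \frac{z - N}{z - k}$. Using $(z - N)/(z - k) = 1 - (N - k)/(z - k)$ and rearranging,
\begin{equation*}
G_N(z) = \frac{N - k}{N(z - k)} \prod_{\ell \in B} \frac{z - \ell - 1}{z - \ell} + \frac{1}{N}\left(1 - \prod_{\ell \in B} \frac{z - \ell - 1}{z - \ell}\right),
\end{equation*}
so for $z$ outside a fixed compact neighbourhood of $B \cup \{k\}$,
\begin{equation*}
\lim_{N \to \infty} G_N(z) \;=\; G(z) \;:=\; \frac{1}{z - k} \prod_{\ell \in B} \frac{z - \ell - 1}{z - \ell}.
\end{equation*}
On the other hand, a partial-fraction expansion of the defining formula for $m_{Kerov}[\lambda]$ gives $G_{Kerov}(z) = \prod_j (z - y_j) / \prod_i (z - x_i)$, so the proof reduces to the rational identity $G(z) = G_{Kerov}(z)$.

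For this I would expand $G$ as a ratio of polynomials: the numerator is $\prod_{i=1}^k (z - r_i)$ with $r_i := i - \lambda_i$, and the denominator is $(z - k) \prod_{i=1}^k (z - r_i + 1)$. Since $\lambda$ is non-increasing the sequence $r_i$ is strictly increasing, and $r_i - r_{i-1} = 1$ exactly when $\lambda_{i-1} = \lambda_i$; each such coincidence cancels one numerator factor against one denominator factor in $G$. The surviving numerator factors $(z - r_i)$ are indexed by $i$ with $\lambda_i > \lambda_{i+1}$ (set $\lambda_{k+1} := 0$), which correspond to the removable boxes of $\lambda$ and hence, after matching the coordinate convention of the figure, to the outer corners $\{y_j\}$; the surviving denominator factors $(z - r_i + 1)$, together with the extra factor $(z - k)$, are indexed by the addable boxes and correspond to the inner corners $\{x_i\}$. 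This establishes $G = G_{Kerov}$.

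The main obstacle is the bookkeeping in the last paragraph: one must verify carefully that the cancellation produces exactly the sets of addable and removable boxes, and then match the sign convention used in the figure. The other technical point is that $w_j = 0$ for every $j \in A$ with $j > k$, because the adjacent zero entries of $\lambda(N)$ create a vanishing factor in $w_j$; hence only finitely many atoms of $\overline{m}^A_{PP}[\lambda(N)]$ carry non-vanishing mass and they all lie in the bounded set $B \cup \{k\}$. Pointwise convergence $G_N \to G$ in a neighbourhood of infinity then yields convergence of all moments, and by uniform compactness of supports this upgrades to weak convergence of the measures, as claimed.
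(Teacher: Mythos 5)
Your proof is correct. The paper gives no argument here (it says only ``straightforward computation'' and points to Biane), and your Stieltjes-transform computation --- identifying $N\,G_N(z)=1-\prod_i\frac{z-\ell_i-1}{z-\ell_i}$, telescoping the bulk block $\{k,\dots,N-1\}$, and matching the surviving numerator/denominator factors with the removable/addable corners in the figure's $i-j$ coordinate convention --- is a complete and correct way of carrying that computation out.
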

\begin{proof} Straightforward computation. In implicit form this statement can be also found in
\cite[Proposition 7.2]{Biane_S}.
\end{proof}

In his study of the operations on irreducible representations of symmetric groups, Biane
\cite{Biane_S} was investigating the behavior of Kerov transition measures as $n\to\infty$. In
particular, for the \emph{outer products} of the representations of $S(n)$ he obtained an exact
analogue of our Theorem \ref{Theorem_main_intro_PP}, which also involves the free convolution.
Proposition \ref{Proposition_to_Kerov} and the well-known fact that the decomposition into
irreducible components of outer products of representations of $S(n)$ and of (Kronecker) tensor
products of representations of $U(N)$ are essentially governed by the same Littlewood--Richardson
coefficients, shows a relation between two results, however, formally, neither is implied by
another.

\subsection{Restrictions and lozenge tilings}
\label{Section_tilings}

The developments of this section are based on the remarkable connection between the
Gelfand--Tsetlin bases in irreducible representations and the enumeration of \emph{lozenge tilings
of planar domains}, cf.\ \cite[Section 2]{CLP} and also \cite{BG_lectures}, \cite{BP_lectures}.

\medskip

Consider a strip of width $N$ drawn on the regular triangular lattice, whose left boundary is
vertical line $x=0$ and right boundary is vertical line $x=N$ with $N$ triangles with vertical
coordinates $\mu_1>\mu_2>\dots>\mu_N$ sticking out of it, as shown in Figure
\ref{Fig_polyg_domain}. Note that if we write $\mu_i=\lambda_i+N-i$, then $\lambda$ is a signature
of size $N$; let us assume that $\lambda_N=0$. We are interested in \emph{tilings} of this domain
(strip) with rhombi (``\emph{lozenges}'') of 3 types, where each rhombus is a union of 2
elementary triangles. In Figure \ref{Fig_polyg_domain} we color one of the type of lozenges
(``horizontal'') in blue and other two types are kept white. Due to combinatorial constraints the
tilings of such domain are in bijection with tilings of a polygonal domain, as shown on the right
panel of Figure \ref{Fig_polyg_domain}. In particular, there are finitely many such tilings, let
$\Upsilon_\lambda$ denote the uniformly random tiling of the domain encoded by a signature
$\lambda$.

\begin{figure}[h]
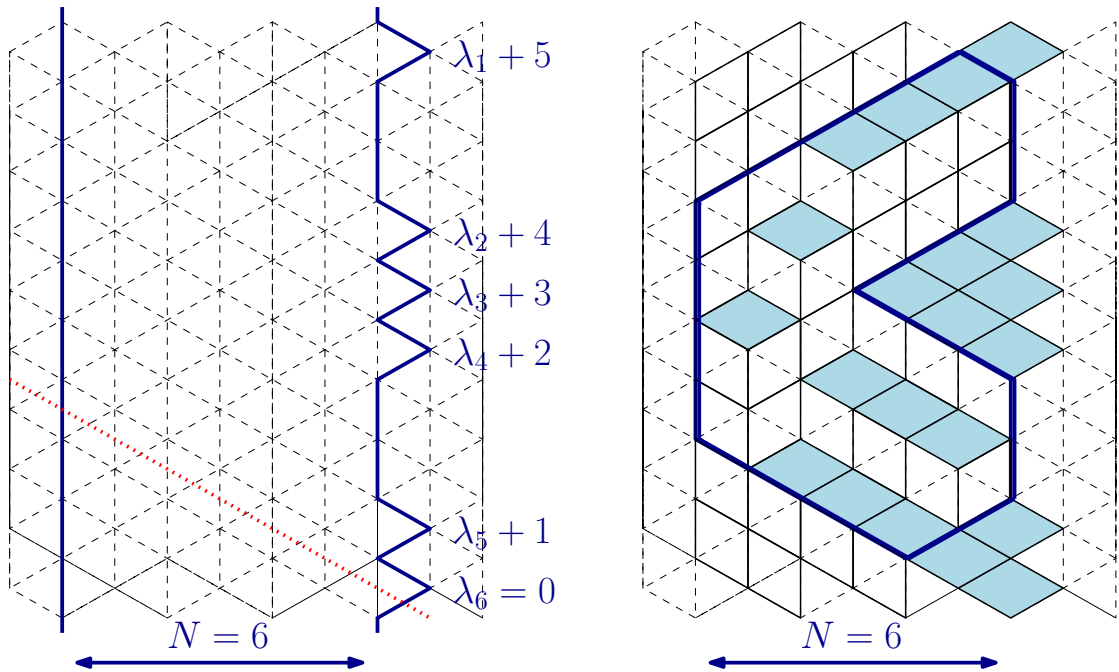

\begin{center}
 \noindent{\scalebox{0.7}{\includegraphics{domain.pdf}}} \hskip 1cm {\scalebox{0.7}{\includegraphics{domain_2.pdf}}}
 \caption{Domain encoded by signature $4\ge 2\ge2\ge2\ge0\ge 0$ (left panel, vertical coordinates of lozenges are counted from the red dotted line)
  and a lozenge tiling of corresponding polygonal domain (right panel).
 \label{Fig_polyg_domain}}
\end{center}
\end{figure}

There was a great interest in random lozenge tilings of  planar domains and their asymptotic
properties as the size of the domain goes to infinity in the last 15 years, with many fascinating
results, see e.g.\ \cite{CKP}, \cite{OR}, \cite{PS}, \cite{KOS}, \cite{KO-Burgers}, \cite{Ke},
\cite{BF}, \cite{BG},  \cite{BGR}, \cite{Petrov-curves}, \cite{GP}, \cite{Sevak} and many others.

In particular, the following limit shape theorem is a particular case of a more general statement
of Cohn--Kenyon--Propp \cite{CKP}, Kenyon--Okounkov--Sheffield \cite{KOS}. For a point $(x_0,y_0)$
in the strip of Figure \ref{Fig_polyg_domain} on $(x,y)$--plane, define the value of \emph{the
height function} $H(x_0,y_0)$ as the number of horizontal lozenges, which intersect the line
$x=x_0$ and which are below $(x_0,y_0)$, i.e.\ whose vertical coordinate is less than $y_0$.
\begin{proposition}[\cite{CKP}, \cite{KOS}]
\label{Proposition_limit_shape} Suppose that $\lambda(N)\in\widehat U(N)$, $N=1,2,\dots$, is a
sequence of
 signatures which satisfies a technical assumption of Definition \ref{definition_regularity}, and let $H_N(x,y)$ be the random height
  function of uniformly random tiling $\Upsilon_{\lambda(N)}$. Then
 as $N\to\infty$ for any $0<x<1$ and any $y\in\mathbb R$ the normalized height function $\frac{1}{N} H(Nx,Ny)$ converges to a \emph{deterministic} limit function,
 which can be found as a solution of a certain variational problem.
\end{proposition}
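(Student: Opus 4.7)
The plan is to exploit the classical bijection (cf.\ \cite[Section 2]{CLP}) between lozenge tilings of the strip domain encoded by a signature $\lambda$ and Gelfand--Tsetlin patterns with top row $(\lambda_1+N-1,\lambda_2+N-2,\dots,\lambda_N)$. Under this bijection, the positions of horizontal lozenges along the vertical slice $x = k$ form a Gelfand--Tsetlin row which is a signature $\nu^{(k)} \in \widehat U(k)$. The numbers of completions of the tiling above and below the slice are counted by GT patterns with fixed top and bottom rows and equal, via the Weyl dimension formula, to $\dim^A(\lambda)/\dim^A(\nu^{(k)})$ and $\dim^A(\nu^{(k)})$ respectively. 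Hence the pushforward of the uniform measure on tilings under $\nu^{(k)}$ is precisely the measure $\rho^{\alpha,\lambda(N)}$ on $\widehat U(\lfloor \alpha N \rfloor)$ (with $\alpha = k/N$) from Section \ref{Section_results}.

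Once this identification is in place, Theorem \ref{Theorem_main} supplies, for every $0 < \alpha < 1$, the convergence in probability and in moments of the random counting measure $m^A[\rho^{\alpha,\lambda(N)}]$ to the deterministic measure $pr^\otimes_\alpha(\mes)$, where $\mes = \lim_{N\to\infty} m^A[\lambda(N)]$. Geometrically, the derivative $\partial_y(\tfrac{1}{N}H(Nx_0, Ny))$ is (up to sign) the empirical density of horizontal lozenges at the slice $x = x_0 N$, so the previous convergence determines the limiting profile of $\partial_y H$ along each vertical slice. Integration in $y$ (with the boundary condition $H \to 0$ far below the domain) then recovers the normalized limit $h(x,y) := \lim_N \tfrac{1}{N} H(Nx, Ny)$ slice by slice.

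To match this limit with a variational characterization, I would invoke Theorem \ref{Theorem_linearization}, which gives $R^{quant}_{pr^\otimes_\alpha(\mes)}(z) = \tfrac{1}{\alpha} R^{quant}_{\mes}(z)$. Via the Stieltjes transform of $pr^\otimes_\alpha(\mes)$ this functional relation translates into the complex Burgers equation for the complex slope of $h(x,y)$ derived in \cite{KO-Burgers}. The same PDE is the Euler--Lagrange equation of the surface--tension functional from \cite{CKP,KOS}, so the function $h(x,y)$ obtained by our slicewise procedure agrees with the unique maximizer of that functional under the prescribed boundary data, which is precisely the variational problem appearing in the statement.

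The main obstacle is to upgrade the slicewise, in-the-sense-of-moments convergence provided by Theorem \ref{Theorem_main} to uniform convergence of the height function on the strip. Two ingredients are needed: first, equicontinuity, which follows from the deterministic Lipschitz bound $|H(x,y) - H(x',y')| \le C(|x-x'| + |y-y'|)$ valid for every lozenge tiling; and second, joint control in $\alpha$, i.e.\ continuity in $\alpha$ of the limit density of $pr^\otimes_\alpha(\mes)$. The latter is accessible through Theorem \ref{Theorem_linearization} but requires showing that $pr^\otimes_\alpha(\mes)$ has a bounded density, so that its cumulative distribution function is Lipschitz uniformly in $\alpha$. Combining these with a standard $\eps$--net argument over $(x,y)$ and a union bound converts the one-slice convergence into the pointwise statement of the proposition; the delicate step is the density bound, which is the only part that does not follow formally from the general machinery developed earlier in the paper.
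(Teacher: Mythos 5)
The paper does not prove this proposition at all: it is stated with the attribution [\cite{CKP}, \cite{KOS}] and justified in one line as ``a particular case of a more general statement'' of Cohn--Kenyon--Propp and Kenyon--Okounkov--Sheffield, i.e.\ of their general variational principle for tilings. Your derivation from the paper's own machinery --- the branching-rule identification of the slice at $x=\lfloor\alpha N\rfloor$ with $\rho^{\alpha,\lambda(N)}$ (this is exactly Proposition \ref{Proposition_tilings_and_restrictions}), followed by Theorem \ref{Theorem_main} and the observation that the height function at a fixed slice is the scaled distribution function of the counting measure --- is a genuinely different route, and it is in fact the ``alternative derivation for the limit shape theorem'' that the authors themselves advertise in Section \ref{Section_tilings}. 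There is no circularity, since Theorem \ref{Theorem_main} is proved independently of any tiling input. What each approach buys: the citation route yields the variational characterization immediately but gives the limit only implicitly, whereas your route produces the limit explicitly through the $R^{quant}$-transform of $pr^\otimes_\alpha(\mes)$; on the other hand, the clause ``can be found as a solution of a certain variational problem'' is part of the statement, so to recover it in full you still have to import from \cite{CKP}, \cite{KO-Burgers} the identification of the complex-Burgers/Euler--Lagrange solution with the unique maximizer of the surface-tension functional --- your proof is internal only for the existence and determinism of the limit, not for its variational description.

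Two smaller points. First, the proposition asks only for pointwise convergence at fixed $(x,y)$, so your equicontinuity and $\eps$-net machinery is not needed. Second, the step you single out as delicate --- that $pr^\otimes_\alpha(\mes)$ has bounded density, so that its distribution function is continuous and weak convergence of the slice measures upgrades to convergence of the height function at every $y$ --- is actually immediate: each $m^A[\mu]$ for $\mu\in\widehat U(M)$ places atoms of mass $1/M$ at points spaced at least $1/M$ apart, so any weak limit of such measures has density at most $1$ with respect to Lebesgue measure, its distribution function is $1$-Lipschitz, and every $y$ is a continuity point. No appeal to Theorem \ref{Theorem_linearization} is required for this.
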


In order to link Proposition \ref{Proposition_limit_shape} and Theorem \ref{Theorem_main} observe
that for $M=1,\dots,N$ every tiling of the strip of Figure \ref{Fig_polyg_domain} has exactly $M$
horizontal lozenges at vertical line $x=M$. Coordinates of these lozenges can be encoded by a
signature from $\widehat U(M)$.

\begin{proposition}
\label{Proposition_tilings_and_restrictions} Take two integers $0<M<N$ and let $\lambda\in\widehat
U(N)$. Let $\mu\in\widehat U(M)$ be the \emph{random} signature which encodes the positions of
horizontal lozenges on vertical line $x=M$ in uniformly random lozenge tiling $\Upsilon_\lambda$.
Then the distribution of $\mu$ is given by the measure $\rho^\pi$ of \eqref{eq_rep_measure} , where
 $\pi$ is the restriction of irreducible representation $\pi^\lambda$ of $U(N)$ to the
subgroup $U(M)\subset U(N)$.
\end{proposition}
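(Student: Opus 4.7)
The plan is to reduce everything to the classical bijection between lozenge tilings of such strip domains and Gelfand--Tsetlin patterns, and then invoke the branching rule for $U(N)\downarrow U(N-1)$ in the form of interlacing of signatures.

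First I would set up the bijection. For any tiling of the strip of Figure \ref{Fig_polyg_domain}, let $\lambda^{(k)}\in\widehat U(k)$ record the vertical coordinates of the $k$ horizontal lozenges on the vertical line $x=k$, with the convention $\mu_i=\lambda_i+N-i$ translating positions to signature coordinates. The combinatorics of lozenges (each non-horizontal lozenge forces a specific local configuration of horizontal neighbors) forces the interlacing condition $\lambda^{(k+1)}_i\ge\lambda^{(k)}_i\ge\lambda^{(k+1)}_{i+1}$ for all $i$ and $k$, and conversely every such sequence of interlacing signatures $\lambda^{(N)}=\lambda,\lambda^{(N-1)},\dots,\lambda^{(1)}$ (with the final $\lambda^{(0)}$ trivial if we extend, or stopping at $M$ for the half-strip on the right of $x=M$) comes from a unique tiling. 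In other words, tilings of the strip are in bijection with Gelfand--Tsetlin patterns with top row $\lambda$.

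Next I would split the counting across the line $x=M$. A tiling with prescribed $\lambda^{(M)}=\mu$ decomposes into two independent pieces: the upper piece consists of the interlacing chain $\lambda=\lambda^{(N)}\succ\lambda^{(N-1)}\succ\cdots\succ\lambda^{(M)}=\mu$, while the lower piece consists of the chain $\mu=\lambda^{(M)}\succ\lambda^{(M-1)}\succ\cdots\succ\lambda^{(1)}$. By the Weyl branching rule, each elementary step $\lambda^{(k+1)}\succ\lambda^{(k)}$ in the upper piece corresponds precisely to a one-dimensional irreducible constituent in $\mathrm{Res}^{U(k+1)}_{U(k)}\pi^{\lambda^{(k+1)}}$, so the number of upper pieces equals the multiplicity $c^{\lambda}_\mu$ of $\pi^\mu$ in $\mathrm{Res}^{U(N)}_{U(M)}\pi^\lambda$. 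The number of lower pieces, by the same branching rule applied iteratively from $U(M)$ down to $U(1)$, equals $\dim\pi^\mu$ (the standard Gelfand--Tsetlin-pattern interpretation of the Weyl dimension formula). The total number of tilings of the whole strip is likewise $\dim\pi^\lambda$.

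Therefore, since $\Upsilon_\lambda$ is uniform,
\begin{equation*}
\Pr[\mu]\;=\;\frac{\#\{\text{upper pieces}\}\cdot\#\{\text{lower pieces}\}}{\#\{\text{all tilings}\}}\;=\;\frac{c^{\lambda}_\mu\,\dim\pi^\mu}{\dim\pi^\lambda},
\end{equation*}
which is exactly $\rho^\pi(\mu)$ for $\pi=\mathrm{Res}^{U(N)}_{U(M)}\pi^\lambda$ as defined in \eqref{eq_rep_measure}. The main (and essentially only) subtle point is the verification that the lozenge combinatorics really enforces exactly the interlacing relations and nothing more; once this geometric-to-combinatorial bijection is in place, the representation-theoretic identification is automatic from the Weyl branching rule.
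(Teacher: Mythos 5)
Your proof is correct and is precisely the argument the paper has in mind: the paper's own proof simply cites the Weyl branching rule (and references such as \cite{BK}, \cite{GP}, \cite{BP_lectures}) where the bijection between tilings of the strip and interlacing chains of signatures, and the resulting factorization of the uniform probability into $c^\lambda_\mu \dim\pi^\mu/\dim\pi^\lambda$, is spelled out exactly as you have done. Nothing to add.
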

\begin{proof}
 This is a reformulation of the well--known \emph{branching rule} for the restrictions of
 representations of $U(N)$, see \cite{Zh}, \cite{FH}. The statement is also explained in
 \cite{BK}, \cite{GP}, \cite{BP_lectures}.
\end{proof}

Proposition \ref{Proposition_tilings_and_restrictions} yields, in particular, that the height
function of Proposition \ref{Proposition_limit_shape} as a function of $y$ with fixed $x=\lfloor
\alpha N\rfloor$ is precisely the (scaled) distribution function of the measure
$\rho^{\alpha,\lambda(N)}$ of Theorem \ref{Theorem_main}. This gives a direct relation between the
convergence of height functions in Proposition \ref{Proposition_limit_shape} and convergence of
measures in Theorem \ref{Theorem_main}. The difference here is that the limit in Proposition
\ref{Proposition_limit_shape} is described as a solution to a certain (complicated) variational
problem, while the limit in Theorem \ref{Theorem_main} is given in \ref{Theorem_linearization}
through its generating function.

For example, the concentration theorem for the restrictions of irreducible representations  with
rectangular signatures $(\underbrace{\beta N,...\beta N}_{\gamma N},
\underbrace{0,...0}_{(1-\gamma)N})$ corresponds to the limit shape theorem for the lozenge tilings
of hexagons or, equivalently, for \emph{boxed plane partitions}. In this case the variational
problem admits an explicit solution and the formulas for the limit shape are known, see
\cite{CLP}, \cite{Gorin-FAA}. On the other hand, our Theorem \ref{Theorem_main} gives an
alternative derivation for the limit shape theorem and Theorem \ref{Theorem_linearization} links
it to the free projections.

\begin{figure}[h]
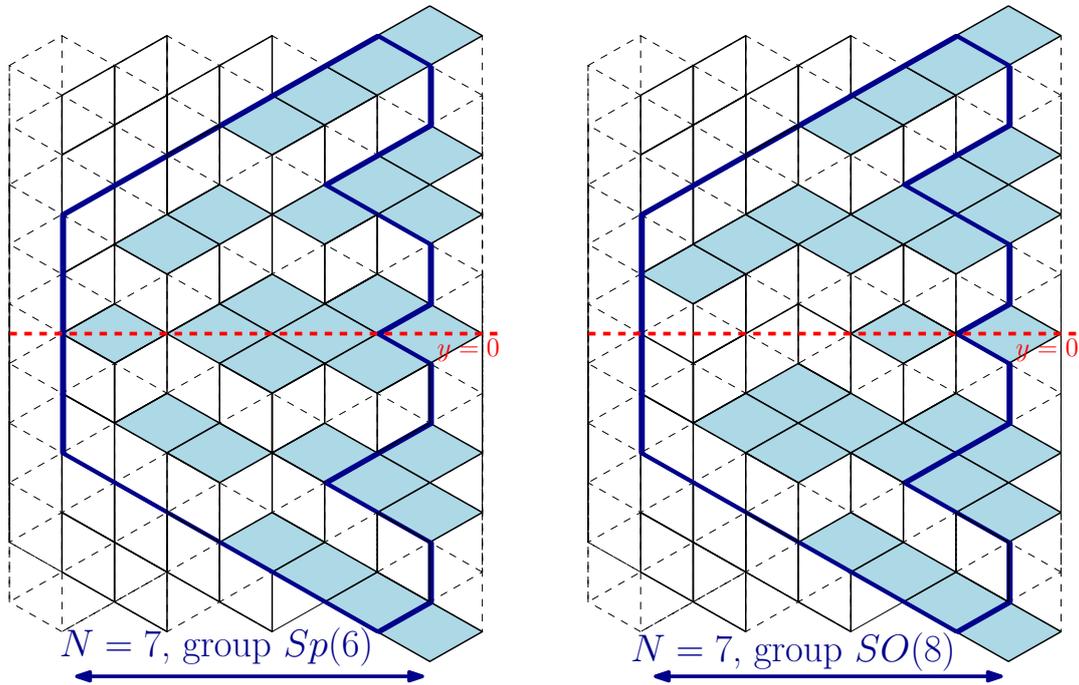

\begin{center}
 \noindent{\scalebox{0.7}{\includegraphics{domain_Sp.pdf}}} \hskip 1cm {\scalebox{0.7}{\includegraphics{domain_O.pdf}}}
 \caption{Symmetric lozenge tilings, corresponding to representations of symplectic group (left panel) and orthogonal group (right panel).
 \label{Fig_polyg_domain_sym}}
\end{center}
\end{figure}

\smallskip

The restrictions of representations of symplectic and orthogonal groups are related to the lozenge
tilings which are symmetric with respect to the line $y=0$, see Figure \ref{Fig_polyg_domain_sym}.
Put it otherwise, for each $M=1,2,\dots,N$ the coordinates of $M$ horizontal lozenges on the
vertical line $x=N$ should be symmetric around zero. When $M$ is even, there is a unique way to
prescribe what does it mean for a collection of $M$ numbers to be symmetric. However, when $M$ is
odd, there are two ways to do this: we have to specify what's happening with the middle number
(i.e.\ with the $\frac{M+1}{2}$th). We could either say that this number should be zero, as in the
left panel of Figure \ref{Fig_polyg_domain_sym}, or we could say that all the collection
\emph{except} for the middle number is symmetric (and there are no restrictions on this number) as
in the right panel of Figure \ref{Fig_polyg_domain_sym}. We call the former \emph{strongly
symmetric} tilings and the latter \emph{weakly symmetric} tilings.

Similarly to the identification of Figure \ref{Fig_polyg_domain}, we identify signature
$\lambda_1\ge\lambda_2\ge\dots\ge\lambda_N\ge 0$ labeling irreducible representation of $Sp(2N)$
with a strictly symmetric collection of $2N+1$ horizontal lozenges (the signature itself encodes
the positive coordinates of lozenges). We further identify signature
$\lambda_1\ge\lambda_2\ge\dots\ge\lambda_N\ge 0$ labeling irreducible representation of $SO(2N+1)$
with a weakly symmetric collection of $2N$ horizontal lozenges and signature
$\lambda_1\ge\lambda_2\ge\dots\ge\lambda_N$ labeling irreducible representation of $SO(2N)$ with a
weakly symmetric collection of $2N-1$ horizontal lozenges ($\lambda_N$, which might be negative,
corresponds to the middle $N$th lozenge).

\begin{proposition}
\label{Proposition_tilings_and_restrictions_BCD}
 In the same sense as in Proposition \ref{Proposition_tilings_and_restrictions}
the restrictions of representations of $Sp(2N)$ to $Sp(2M)\subset Sp(2N)$ ($0<M<N$) correspond to
the horizontal lozenges on the line $x=2M+1$ of uniformly random strongly symmetric tilings of
domain of width $2N+1$. The restrictions of representations of $SO(N)$ to $SO(M)\subset SO(N)$
($1<M<N$) correspond to the horizontal lozenges  on the line $x=M+1$ of uniformly random weakly
symmetric tilings of domain of width $N+1$.
\end{proposition}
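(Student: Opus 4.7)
My approach parallels the argument behind Proposition \ref{Proposition_tilings_and_restrictions}: I will set up a bijection between the combinatorial objects (symmetric lozenge tilings) and the algebraic ones (Gelfand--Tsetlin-type branching chains), and then verify that the uniform measure on tilings pushes forward to $\rho^{\alpha,\lambda}$ via the classical branching rules for the groups in question.

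First I would encode a strongly symmetric tiling of the strip of width $2N+1$ by recording, on each successive vertical line, the positive half of its horizontal-lozenge coordinates: on $x=2k+1$ this gives a signature $\lambda^{(k)}\in\widehat{Sp}(2k)$, and on the intermediate line $x=2k$ an auxiliary sequence $\nu^{(k)}$. Because all horizontal lozenges are unit rhombi, adjacent vertical lines must interlace; after folding by the $y\to -y$ symmetry the resulting two-step interlacing between $\lambda^{(k)},\nu^{(k)},\lambda^{(k-1)}$ (with $\nu^{(k)}_k\geq 0$ forced by the lozenge pinned at $y=0$) is exactly the defining inequality of a symplectic Gelfand--Tsetlin pattern in the sense of \cite{Zh}. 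Iterating the classical branching rule $Sp(2k)\downarrow Sp(2k-2)$ then identifies these patterns with a basis of $\pi^\lambda$, so the total number of tilings equals $\dim^C(\lambda)$.

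Given this bijection, the proof of the symplectic statement is immediate: if $\mu\in\widehat{Sp}(2M)$ is prescribed at $x=2M+1$, then the number of extensions downward to $x=1$ equals $\dim^C(\mu)$ by the same argument applied to the sub-strip of width $2M+1$, while the number of extensions upward to $x=2N+1$ matching the top signature $\lambda$ equals the branching multiplicity $c^\lambda_\mu$ of $\pi^\mu$ in $\pi^\lambda|_{Sp(2M)}$ (again by iterated Zhelobenko). Dividing by $\dim^C(\lambda)$ recovers formula \eqref{eq_rep_measure}.

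For the orthogonal groups I would repeat the construction using weakly symmetric tilings. The difference is that the middle horizontal lozenge on a line carrying an odd number of them is not pinned to $y=0$: its signed vertical coordinate becomes the entry $\lambda_N$ of the $\widehat{SO}(2N)$ signature (which may be negative), and is simply absent from the count for $\widehat{SO}(2N+1)$. The interlacing pattern between successive vertical lines, read off from lozenge adjacency, then matches the classical branching chain $SO(N)\supset SO(N-1)$ described in \cite{Zh}, \cite{FH}, and the counting argument above applies verbatim. The only real obstacle is bookkeeping at the central axis: one must check that the interlacing inequalities produced by lozenge geometry reproduce exactly the correct branching inequalities in each of the three cases $B$, $C$, $D$, with proper treatment of the middle lozenge and of the sign ambiguity in $\widehat{SO}(2N)$.
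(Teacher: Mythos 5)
Your proposal is correct and follows essentially the same route as the paper, which simply observes that the statement is a reformulation of the classical branching rules and points to \cite{Kirillov} for the two-step (Zhelobenko) form of the symplectic rule and to \cite{BK-O} for the weakly symmetric tiling interpretation of the orthogonal rules. You have merely spelled out the standard bijection between symmetric tilings and iterated branching chains that the paper leaves implicit.
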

\begin{proof} This is again a reformulation of the classical branching rules for the
representations of $Sp(2N)$ and $SO(N)$. There are various formulations of the branching rules,
the required form for the symplectic group is best explained in \cite{Kirillov} and for the
orthogonal groups the lozenge tilings interpretation equivalent to weakly symmetric tilings is
described in \cite{BK-O}.
\end{proof}

Now a combination of Propositions \ref{Proposition_tilings_and_restrictions},
\ref{Proposition_tilings_and_restrictions_BCD} with Theorem \ref{Theorem_linearization} gives the
following new statement in the spirit of Proposition \ref{Proposition_limit_shape}.

\begin{proposition}
\label{Proposition_limit_shape_sym} Suppose that $\lambda(N)\in\widehat U(N)$, $N>0$, is a
sequence of
 signatures which satisfies a technical assumption of Definition \ref{definition_regularity}, and also such that $\lambda(N)$ is strictly (weakly) symmetric.
 Let $H_N(x,y)$ be the random height
  function of uniformly random tiling $\Upsilon_{\lambda(N)}$ and let $\widetilde H_N(x,y)$ be the random height function of uniformly random weakly (strictly)
 symmetric
   tiling of the same domain.
  Then as $N\to\infty$ for any $0<x<1$ and any $y\in\mathbb R$ the normalized height functions $\frac{1}{N} H(Nx,Ny)$ and $\frac{1}{N} \widetilde H(Nx,Ny)$ converge
  to \emph{the same} deterministic limit function.
\end{proposition}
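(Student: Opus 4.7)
The plan is to compare both height functions to the limit shape predicted by Theorems \ref{Theorem_main} and \ref{Theorem_linearization}, which characterize it via the quantized $R$-transform of the initial counting measure. First, by Proposition \ref{Proposition_tilings_and_restrictions}, the random positions of the horizontal lozenges of $\Upsilon_{\lambda(N)}$ on the vertical line $x=\lfloor\alpha N\rfloor$ are distributed as $\rho^{\alpha,\lambda(N)}$ for the unitary restriction, so $\tfrac{1}{N}H_N(Nx,\cdot)$ is (up to an affine change of variables determined by $\alpha$) the random cumulative distribution function of $m^A[\rho^{\alpha,\lambda(N)}]$. By Theorem \ref{Theorem_main} this converges in probability to $pr^\otimes_\alpha(\mes^A)$, where $\mes^A=\lim_N m^A[\lambda(N)]$, and Theorem \ref{Theorem_linearization} identifies the limit by $R^{quant}_{pr^\otimes_\alpha(\mes^A)}(z)=\tfrac1\alpha R^{quant}_{\mes^A}(z)$.

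Parallelly, Proposition \ref{Proposition_tilings_and_restrictions_BCD} identifies uniform strongly (resp.\ weakly) symmetric tilings of the given domain with the Gelfand--Tsetlin pattern of an irreducible representation of $Sp(2K)$ (resp.\ $SO(N)$), where the rank $K$ is determined by $N$ in the obvious way, and in this identification the line $x=\lfloor\alpha N\rfloor$ inside the tiled domain matches the branching $G(K)\supset G(\lfloor\alpha' K\rfloor)$ for an $\alpha'$ that agrees with $\alpha$ in the limit. So $\tfrac{1}{N}\widetilde H_N(Nx,\cdot)$ is, up to the same affine change of variables, the CDF of the random measure $m^G[\rho^{\alpha',\lambda'(N)}]$, where $\lambda'(N)$ is the symplectic/orthogonal signature encoded by the symmetric $\lambda(N)$. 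Again Theorem \ref{Theorem_main} gives convergence to $pr^\otimes_{\alpha'}(\mes^G)$ with $\mes^G=\lim_N m^G[\lambda'(N)]$, and Theorem \ref{Theorem_linearization} describes the limit via $R^{quant}_{\mes^G}$.

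The heart of the proof is therefore the identification $\mes^A=\mes^G$ (with $\alpha=\alpha'$) in the limit. Here is where the symmetry hypothesis enters. Since $\lambda(N)$ is symmetric, the atoms $\{(\lambda_i+N-i)/N\}_{i=1}^N$ of $m^A[\lambda(N)]$ pair up symmetrically about $1/2$: writing $\nu_i=\lambda_i+N-i$, the relation $\nu_i+\nu_{N+1-i}=N-1$ means that each atom at $\nu_i/N$ is matched by an atom at $(N-1-\nu_i)/N$. On the other hand, the definition
$$m^G[\lambda']=\frac{1}{2K}\sum_{i=1}^K\Bigl(\delta\bigl(\tfrac{\lambda'_i+2K-i}{2K}\bigr)+\delta\bigl(\tfrac{i-\lambda'_i}{2K}\bigr)\Bigr),\qquad \tfrac{\lambda'_i+2K-i}{2K}+\tfrac{i-\lambda'_i}{2K}=1,$$
automatically puts its atoms in symmetric pairs about $1/2$. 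A direct comparison of the atom locations under the bijection between the nonnegative half of $\lambda(N)$ and $\lambda'(N)$ shows that the two empirical measures differ only by shifts of order $1/N$ in the atom locations and by a total mass of order $1/N$ coming from the (at most one) ``middle'' lozenge; hence they share the same weak limit $\mes^A=\mes^G$.

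The main technical obstacle is the bookkeeping of conventions: the domain width in Proposition \ref{Proposition_tilings_and_restrictions_BCD} is $2K+1$ or $K+1$ while the rank is $K$, whereas on the unitary side the domain width equals the rank $N$; the slice coordinate $\alpha N$ on the unitary side must be converted into the corresponding slice $2M+1$ or $M+1$ on the symmetric side, and one must check that the resulting $\alpha'$ and the rescaling factors in the definitions of $m^A$ and $m^G$ all produce the \emph{same} $\mes$ and the \emph{same} $\alpha$ in the limit. Once this matching is verified, the equality $R^{quant}_{pr^\otimes_\alpha(\mes^A)}=R^{quant}_{pr^\otimes_\alpha(\mes^G)}$ forces the two limit measures to coincide, and since these limit measures have bounded densities (their CDFs are continuous), the pointwise convergence of moments translates into pointwise convergence of height functions to the same deterministic limit.
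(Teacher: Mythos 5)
Your proposal is correct and follows the same route the paper intends: the paper derives this proposition in one line as "a combination of Propositions \ref{Proposition_tilings_and_restrictions}, \ref{Proposition_tilings_and_restrictions_BCD} with Theorem \ref{Theorem_linearization}," and your argument simply fills in those details — identifying both height functions with CDFs of $m^G[\rho^{\alpha,\cdot}]$ for the appropriate root systems, checking that the symmetric signature yields the same limiting counting measure on both sides (which is exactly what the paper's normalization of $m^{B,C,D}$ was designed for), and invoking the root-system-independent formula $R^{quant}_{pr^\otimes_\alpha(\mes)}=\tfrac1\alpha R^{quant}_{\mes}$ to conclude the limits coincide.
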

As before, the limit function can be described either in terms of the variational problem or
through the procedure of Theorem \ref{Theorem_linearization}.

\subsection{$U(\infty)$ and infinite divisibility}
\label{Section_U_infty} In the previous section we were discussing the law of large numbers (i.e.\
limit shape theorems) for the restrictions of representations of $U(N)$, $SO(N)$, $Sp(2N)$. One can
study similar problems with $N=\infty$, i.e.\ study restrictions of representations and characters
of \emph{infinite--dimensional} groups.

The most well-known example arising from such restrictions is the \emph{Plancherel measure} for the
symmetric groups $S(n)$, which describes the decomposition of \emph{biregular} representation of
the infinite symmetric group $S(\infty)$ into irreducible components. The law of large numbers for
the Plancherel measure is the celebrated Logan--Shepp--Vershik--Kerov limit shape theorem
\cite{LS}, \cite{VK_Plan}. Similar limit theorems exist for other representations of $S(\infty)$,
see \cite{GGK}.

Analogues of the Plancherel measure for groups $U(\infty)$ and $SO(\infty)$ and their asymptotics
were studied only much later by Borodin and Kuan \cite{BK}, \cite{BK-O}. In a recent article
\cite{BBO} Borodin, Olshanski and one of the authors obtained the limit shape theorem for the
restrictions of a rich family of characters (representations) of $U(\infty)$. In the spirit of
Theorem \ref{Theorem_main}, the limit shapes of \cite{BBO} can be interpreted as probability
measures on $\mathbb R$. We are especially interested in these measures because of their special
property: they are \emph{infinitely--divisible} with respect to the quantized free convolution. Let
us describe this property in more detail.

Consider four finite (not necessarily probability) measures $\mathfrak A^+$, $\mathfrak A^{-}$,
$\mathfrak B^+$, $\mathfrak B^{-}$ on $\mathbb R_{\ge 0}$ with compact supports and such that the
supports of $\mathfrak B^\pm$ are subsets of $[0,b^\pm]$, respectively, with $b^++b^-\le 1$. Set
$$
\mathcal M^{\mathfrak A^+}(z)=M_1(\mathfrak A^+)+M_2(\mathfrak A^+)z + M_2(\mathfrak
A^+)z^2+\dots,
$$
where $M_k(\mathfrak A^+)$ is the $k$th moment of the measure $\mathfrak A^+$, and similarly for
$\mathfrak A^{-}$, $\mathfrak B^+$, $\mathfrak B^{-}$. Take also two reals $\gamma^+,\gamma^-\ge
0$. Let $\mathfrak C$ denote the sextuple of parameters $(\mathfrak A^+,\mathfrak A^-,\mathfrak
B^+,\mathfrak B^-,\gamma^+,\gamma^-)$.

\begin{proposition}
\label{Prop_inf_divisible}
 There exists a unique probability measure $\mathcal M(\mathfrak C)$ on
 $\mathbb R$ such that its $R$--transform is given by:
\begin{multline*}
R^{quant}_{\mathcal M(\mathfrak C)}(u)=R_{\mathcal M(\mathfrak C)}(u)+
\frac{1}{u}-\frac{1}{1-e^{-u}} \\= e^u\gamma^+ -e^{-u}\gamma^- + e^u M^{\mathfrak B^+} (1-e^u) +
e^u \mathcal M^{\mathfrak A^+} (e^u-1) - e^{-u}\mathcal M^{\mathfrak B^-} (1- e^{-u}) -
e^{-u}\mathcal M^{\mathfrak A^-} (e^{-u} - 1).
\end{multline*}
Moreover, $\mathcal M(\mathfrak C)$ is infinitely--divisible with respect to the quantized
free-convolution, i.e.\ for each $n=1,2,\dots$ there exists a probability measure $\mathcal M_n$
such that
$$
 \mathcal M(\mathfrak C)=\underbrace{\mathcal M_n\otimes\dots\otimes
 \mathcal M_n}_{n\text{ factors}}.
$$
\end{proposition}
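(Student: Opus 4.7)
The plan is to split the proof into two parts: existence and uniqueness of $\mathcal M(\mathfrak C)$ with the prescribed quantized $R$--transform, and then infinite divisibility as a consequence. I would treat infinite divisibility first because it is almost free once existence is in hand. The right-hand side of the formula for $R^{quant}_{\mathcal M(\mathfrak C)}(u)$ depends \emph{linearly} on the sextuple parameters: rescaling $\mathfrak A^\pm\mapsto\mathfrak A^\pm/n$, $\mathfrak B^\pm\mapsto\mathfrak B^\pm/n$, $\gamma^\pm\mapsto\gamma^\pm/n$ divides the entire right-hand side by $n$, and the rescaled sextuple $\mathfrak C/n$ still satisfies the standing hypotheses (in particular, the support bounds on $\mathfrak B^\pm$ only improve). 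Granting existence of $\mathcal M(\mathfrak C')$ for every admissible $\mathfrak C'$, set $\mathcal M_n:=\mathcal M(\mathfrak C/n)$. Then by \eqref{eq_linear_final_1}, the $n$-fold quantized free convolution has $R^{quant}_{\mathcal M_n\otimes\cdots\otimes\mathcal M_n}=n\cdot R^{quant}_{\mathcal M_n}=R^{quant}_{\mathcal M(\mathfrak C)}$, and uniqueness gives the required decomposition.

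Uniqueness of $\mathcal M(\mathfrak C)$ is straightforward: its quantized $R$--transform determines $S_{\mathcal M(\mathfrak C)}^{(-1)}(z)$ through \eqref{eq_Voiculescu_R_disc}, hence $S_{\mathcal M(\mathfrak C)}(z)$ and all moments; compact support (which I must verify along the way) then pins down the measure via the Hausdorff moment problem.

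For existence, the plan is to realize $\mathcal M(\mathfrak C)$ as the limit of counting measures $m^A[\lambda(N)]$ of signatures $\lambda(N)\in\widehat U(N)$, invoking the classification theory for characters of $U(\infty)$ discussed in Section \ref{Section_U_infty}. By Voiculescu--Edrei, extreme characters of $U(\infty)$ are parameterized by precisely such sextuples $(\gamma^\pm,\mathfrak A^\pm,\mathfrak B^\pm)$, and by the Vershik--Kerov approximation theorem each is a limit of normalized characters of irreducible representations of $U(N)$; the approximating signatures can be arranged to be regular in the sense of Definition \ref{definition_regularity}, so that the counting measures $m^A[\lambda(N)]$ converge to a compactly supported probability measure, which I take as the definition of $\mathcal M(\mathfrak C)$. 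To identify its quantized $R$--transform with the prescribed formula I would use the techniques of Section \ref{Section_characters}: compute the asymptotics of $s_{\lambda(N)}(x,1,\dots,1)/s_{\lambda(N)}(1,\dots,1)$, which is essentially the Stieltjes transform of $m^A[\lambda(N)]$, and match it against the explicit Edrei--Voiculescu multiplicative generating series for the character $\chi^{\mathfrak C}$; the factorization of that series over atoms of $\mathfrak A^\pm,\mathfrak B^\pm$ and the $\gamma^\pm$-terms corresponds exactly to the summation structure of $R^{quant}_{\mathcal M(\mathfrak C)}(u)$.

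An alternative, more elementary route to existence is to first handle sextuples in which $\mathfrak A^\pm,\mathfrak B^\pm$ are finite sums of point masses: in that case $\mathcal M(\mathfrak C)$ should be constructible as an iterated quantized free convolution of explicit ``elementary'' measures (each associated to a single atom or to a $\gamma^\pm$-contribution, computable from a staircase-type signature), so that the formula for $R^{quant}$ reduces via \eqref{eq_linear_final_1} to summing the $R^{quant}$'s of the elementary pieces; the general case is then obtained by approximation in moments. The main obstacle in either route is the identification of the $R^{quant}$ formula: the character generating series of $U(\infty)$ and the $R^{quant}$ live on different sides of the inversion $S_\mes\leftrightarrow S_\mes^{(-1)}$, and converting one into the other requires the delicate steepest-descent/residue analysis of Section \ref{Section_characters}, together with care to control the contribution of the $\gamma^\pm$ pieces (which at the character level correspond to exponential growth factors and should be matched with the $e^{\pm u}\gamma^\pm$ summands).
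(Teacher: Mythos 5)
Your argument is essentially the paper's: the infinite divisibility is obtained in exactly the same way, by setting $\mathcal M_n=\mathcal M(\mathfrak C/n)$ and using the linearity of the right-hand side in the sextuple together with the additivity of $R^{quant}$ under $\otimes$ from Theorem \ref{Theorem_linearization}. For existence the paper simply cites \cite{BBO} (Theorem 3.2 and equations (3.3)--(3.5) there), and your outline --- approximating the extreme character of $U(\infty)$ with sextuple $\mathfrak C$ by regular sequences of signatures and identifying the limit of the counting measures --- is precisely a sketch of what that reference establishes, so no new gap is introduced beyond what the paper itself delegates.
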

\begin{proof}
 The existence of the measure is proved in \cite{BBO}, see Theorem 3.2 and equations (3.3)-(3.5)
 there (note that any measure can be approximated by discrete ones). The
 infinite divisibility is immediate, since we can choose
 $\mathcal M_n=\mathcal M(\mathfrak C/n)$ corresponding to the sextuple
 $\mathfrak C/n=(\mathfrak A^+/n,\mathfrak A^-/n,\mathfrak B^+/n,\mathfrak
 B^-/n,\gamma^+/n,\gamma^-/n)$.
 \end{proof}

We close this section by the remark that for the (conventional) convolution the classification of
all infinitely--divisible measures is given by the classical Levy-Khintchine formula and for the
free convolution the classification theorem was proved in \cite{Vo2}, \cite{BV}. The list of
Proposition \ref{Prop_inf_divisible} does not exhaust the class of infinitely--divisible measures
with respect to the quantized free convolution (there is an example of an infinitely--divisible
measure outside this list) and it would be interesting to complete the classification. We believe
that the measures of Proposition \ref{Prop_inf_divisible} should play the role of (Free) Compound
Poisson distributions in this classification.

\section{Characters, differential operators, and  asymptotics}
\label{Section_characters} Our approach to the study of the asymptotic decompositions of
irreducible representations of $G(N)$ as $N\to\infty$ is based on the knowledge of the asymptotics
of  normalized logarithms of their characters. We will also employ certain differential operators
which we present in this section.

\subsection{Characters of irreducible representations}
Let $\chi^{G(N)}_\lambda$ denote the character of the irreducible representation of $G(N)$
parameterized by $\lambda\in\hG(N)$. We identify $\chi^{G(N)}_\lambda$ with a symmetric Laurent
polynomial $\chi^{G(N)}_\lambda(u_1,\dots,u_N)$. For $G(N)=U(N)$, $u_i$'s stand for the eigenvalues
of a unitary matrix. When $G(N)\ne U(N)$, the eigenvalues of an element $G(N)$ form pairs $z_1,
z_1^{-1}; z_2,z_2^{-1}$,\dots and we choose $u_i$ to be one element from $i$th pair (the characters
are invariant under $u_i\to u_i^{-1}$, so it does not matter, which one we choose). We have (see
e.g.\ \cite[Section 24.2]{FH}):
\begin{equation}
\label{eq_character_A}
 \chi^{U(N)}_\lambda(u_1,\dots,u_N)=\frac{\det\left[u_i^{\lambda_j+N-j}\right]}{\det\left[u_i^{N-j}\right]}= \frac{\det\left[u_i^{\lambda_j+N-j}\right]}{\prod\limits_{i<j}
 (u_i-u_j)},
\end{equation}
\begin{equation}
\label{eq_character_B}
 \chi^{SO(2N+1)}_\lambda(u_1,\dots,u_N)=\frac{\det\left[u_i^{\lambda_j+N+1/2-j}-u_i^{-(\lambda_j+N+1/2-j)}\right]} {\prod\limits_{i=1}^N
 (u_i^{1/2}-u_i^{-1/2}) \prod\limits_{i<j} (u_i+u_i^{-1}- (u_j+u_j^{-1}))},
\end{equation}
\begin{equation}
\label{eq_character_C}
 \chi^{Sp(2N)}_\lambda(u_1,\dots,u_N)=\frac{\det\left[u_i^{\lambda_j+N+1-j}-u_i^{-(\lambda_j+N+1-j)}\right]} {\prod\limits_{i=1}^N
 (u_i-u_i^{-1}) \prod\limits_{i<j} (u_i+u_i^{-1}- (u_j+u_j^{-1}))},
\end{equation}
\begin{equation}
\label{eq_character_D}
 \chi^{SO(2N)}_\lambda(u_1,\dots,u_N)=\frac{\det\left[u_i^{\lambda_j+N-j}+u_i^{-(\lambda_j+N-j)}\right]+\det\left[u_i^{\lambda_j+N-j}-u_i^{-(\lambda_j+N-j)}\right]}{\prod\limits_{i<j}(u_i+u_i^{-1}-(u_j+u_j^{-1}))}.
\end{equation}

\subsection{Asymptotic expansions of characters}

Recall that $R_\mes(z)$ is the Voiculescu $R$-transform, which was defined in Section
\ref{Section_results}. Integrating $R_\mes(z)$ termwise, set
\begin{equation}
\label{eq_H_function} \H_\mes(u)=\int_0^{\ln(u)}R_\mes(t)dt+\ln\left(\frac{\ln(u)}{u-1}\right),
\end{equation}
which should be understood as a power series in $(u-1)$.

\begin{lemma} If $\mes$ is a measure with compact support, then $\H_\mes(u)$ as a power series in $(u-1)$ is uniformly convergent in an open
neighborhood of $1$.
\end{lemma}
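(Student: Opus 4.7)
The plan is to show that both summands in the definition of $\H_\mes(u)$ are analytic in an open neighborhood of $u=1$, which immediately gives the desired convergent power series expansion in $(u-1)$.

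First, I would establish that $R_\mes(z)$ is analytic in a neighborhood of $0$. Because $\mes$ has compact support, say in $[-C,C]$, its moments satisfy $|M_k(\mes)|\le C^k$, so the series $S_\mes(z)=z+\sum_{k\ge 1}M_k(\mes)z^{k+1}$ has radius of convergence at least $1/C$. Since $S_\mes(z)=z+O(z^2)$ with $S_\mes'(0)=1\ne 0$, the holomorphic inverse function theorem gives that $S_\mes^{(-1)}(z)$ is analytic in some neighborhood of $0$ and equals $z+O(z^2)$. Consequently $1/S_\mes^{(-1)}(z)-1/z$ has a removable singularity at $0$ and is analytic in some open disc $\{|z|<r\}$. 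In particular, $R_\mes(z)$ admits a genuine convergent power series representation on this disc.

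Next, I would analyze each of the two summands in \eqref{eq_H_function}. Since $R_\mes$ is analytic on $|z|<r$, its antiderivative $\mathcal R(z)=\int_0^{z}R_\mes(t)\,dt$ is analytic there. The function $u\mapsto\ln(u)$ (principal branch) is analytic in a neighborhood of $u=1$ and sends $1$ to $0$, so the composition $\mathcal R(\ln(u))$ is analytic in a neighborhood of $u=1$, and therefore expands as a convergent power series in $(u-1)$ there. For the second summand, expand
\[
 \frac{\ln(u)}{u-1}=1-\frac{u-1}{2}+\frac{(u-1)^2}{3}-\cdots,
\]
which is analytic in a neighborhood of $u=1$ and takes the value $1$ at $u=1$. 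Hence $\ln\bigl(\frac{\ln(u)}{u-1}\bigr)$ is analytic in a neighborhood of $1$, vanishing at $u=1$.

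Adding the two analytic summands proves the lemma: $\H_\mes(u)$ is holomorphic in an open neighborhood of $u=1$ and so its power series expansion in $(u-1)$ converges there uniformly on compact subsets. The only step requiring care is the first one, ensuring that $R_\mes$ is actually analytic rather than merely a formal power series; this rests on the compact support hypothesis, which bounds the moments and hence gives $S_\mes(z)$ a positive radius of convergence, a prerequisite for applying the holomorphic inverse function theorem.
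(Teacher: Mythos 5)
Your proof is correct and is exactly the standard argument the paper has in mind: the paper's own proof is the one-line ``Immediately follows from the definitions,'' and your write-up simply fills in those definitions (compact support bounds the moments, so $S_\mes$ has positive radius of convergence; the inverse function theorem and the removable singularity at $0$ make $R_\mes$ analytic; composition with $\ln$ and analyticity of $\ln\bigl(\ln(u)/(u-1)\bigr)$ near $u=1$ finish it). No discrepancy to report.
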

\begin{proof} Immediately follows from the definitions. \end{proof}

\begin{theorem}
\label{Theorem_character_asymptotics}
 Suppose that $\lambda(N)\in \hG(N)$, $N=1,2,\dots$ is a regular sequence of signatures, such that
 $$
  \lim_{N\to\infty} m^G[\lambda(N)]=\mes.
 $$
 Then for any
 $k=1,2,\dots$ we have
 \begin{equation}
 \label{eq_limit_of_logarithm}
  \lim_{N\to\infty} \frac{1}{\hat N}
  \ln\left(\frac{\chi^{G(N)}_{\lambda(N)}(u_1,\dots,u_k,1^{N-k})}{\chi^{G(N)}_{\lambda(N)}(1^N)}\right)=
  \H_\mes(u_1)+\dots+ \H_\mes(u_k),
 \end{equation}
 where the convergence is uniform over an open (complex) neighborhood of $(1,\dots,1)$, $\hat N=N$
 for the unitary group and $\hat N=2N$ for the symplectic and orthogonal groups.
\end{theorem}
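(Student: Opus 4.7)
The plan is to treat the unitary case ($G = A$) first by combining a residue representation of the normalized Schur function with the steepest-descent method, and then to reduce $G = B, C, D$ to $G = A$ via the finite-$N$ identities of \cite{GP}.

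For $G = A$ and $k = 1$, a direct L'Hospital computation from the Weyl formula \eqref{eq_character_A} gives
\begin{equation*}
 \frac{s_\lambda(u, 1^{N-1})}{s_\lambda(1^N)} = \frac{(N-1)!}{(u-1)^{N-1}} \sum_{i=1}^N \frac{u^{\ell_i}}{\prod_{j \ne i}(\ell_i - \ell_j)}, \qquad \ell_i = \lambda_i + N - i.
\end{equation*}
This sum is the residue sum of a contour integral, and after the rescaling $z = N w$ its integrand takes the form $\exp(N F_N(w;u)) \cdot dw$ with $F_N \to F$ uniformly on compacts avoiding $\mathrm{supp}(\mes)$, where
\begin{equation*}
 F(w;u) = w \log u - \int \log(w - x)\, \mes(dx).
\end{equation*}
Standard steepest-descent analysis then yields $\tfrac{1}{N} \log\bigl(s_\lambda(u, 1^{N-1})/s_\lambda(1^N)\bigr) \to F(w^*(u);u) - 1 - \log(u-1)$, where the saddle $w^*(u)$ is determined by $\log u = G_\mes(w^*)$ with $G_\mes$ the Cauchy transform of $\mes$, and the constants $-1$ and $-\log(u-1)$ come from Stirling's formula applied to $(N-1)!$ and from the factor $(u-1)^{-(N-1)}$.

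The identification of this limit with $\H_\mes(u)$ is then a calculus check. Differentiating in $u$ and using $\partial_w F|_{w = w^*} = 0$ gives
\begin{equation*}
 \frac{d}{du} \Bigl[ F(w^*(u); u) - 1 - \log(u-1) \Bigr] = \frac{w^*(u)}{u} - \frac{1}{u-1},
\end{equation*}
while from \eqref{eq_H_function} together with $R_\mes(z) = 1/S_\mes^{(-1)}(z) - 1/z$ one computes $\H'_\mes(u) = R_\mes(\log u)/u + 1/(u \log u) - 1/(u-1)$. The saddle-point equation $G_\mes(w^*) = \log u$ is the Voiculescu inversion $w^*(u) = R_\mes(\log u) + 1/\log u$, which makes the two derivatives agree, and both sides vanish at $u = 1$ since $w^*(u)(u-1) \to 1$ there. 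For general $k$, the analogous multi-variable contour-integral representation factorizes asymptotically into $k$ independent one-variable saddles (one per $u_j$), each contributing $\H_\mes(u_j)$ to the sum.

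For $G = B, C, D$, the character formulas \eqref{eq_character_B}--\eqref{eq_character_D} can be rewritten in terms of Schur polynomials in roughly $2N$ variables via Propositions \ref{Proposition_Schur_Symplectic_1}--\ref{Proposition_Schur_Ortho_2}, and the definition of $m^G[\lambda]$ for these root systems is tailored so that the associated empirical measure on the rescaled shifted coordinates still converges to $\mes$. Hence the unitary-case asymptotics apply with $N$ replaced by $\hat N = 2N$, giving exactly the right-hand side of \eqref{eq_limit_of_logarithm}. The main obstacle throughout is the bookkeeping translation between the Cauchy-transform language (inherent to steepest descent) and the $R$-transform language (appearing in \eqref{eq_H_function}); once this translation is in place, uniform convergence on an open complex neighborhood of $(1, \dots, 1)$ follows from the analytic dependence of $w^*(u)$ and the usual uniformity estimates for the steepest-descent method.
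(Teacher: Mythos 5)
Your proposal follows essentially the same route as the paper: the $k=1$ unitary case via the contour-integral representation \eqref{eq_Schur_integral} and steepest descent (this is precisely the \cite{GM}/\cite{GP} analysis that the paper invokes in Proposition \ref{Proposition_character_asymptotics_k1}, including your identification of the saddle-point equation with the Voiculescu inversion $w^*=R_\mes(\log u)+1/\log u$), the reduction of root systems $B$, $C$, $D$ to $A$ via Propositions \ref{Proposition_Schur_Symplectic_1}--\ref{Proposition_Schur_Ortho_2}, and a factorization step for general $k$. The only substantive differences are that for general $k$ the paper does not perform a multivariable saddle-point analysis (the multivariate representation of \cite{GP} is a determinant of differential operators, so the asymptotic factorization is not automatic) but instead quotes the approximate multiplicativity of normalized characters (Proposition \ref{Proposition_character_asymptotics_k}, i.e.\ \cite[Corollary 3.11]{GP}), and that the case $SO(2N)$ with $\lambda_N\neq 0$ requires a separate argument in Corollary \ref{corollary_character_asymptotics_k1} because the reduction \eqref{eq_x37} involves a first-order differential operator whose contribution must be shown to be negligible after taking logarithms.
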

\noindent {\bf Remark. } $1^M$ here and below means the sequence of $M$ ones
$(\underbrace{1,\dots,1}_{M})$.

Pointwise identity \eqref{eq_limit_of_logarithm} for the unitary groups and real $u_i$  first
appeared in \cite{GM}. In \cite{GP} it was extended to complex $u_i$ and other classical Lie
groups. Note that neither of the papers contain the statement about uniformity. However, the
techniques of \cite{GP} readily imply this uniformity and we fill in all the details in the
Appendix.

\subsection{Differential operators}

Let $V^{G(N)}(u_1,\dots,u_N)$ denote the denominator in formulas
\eqref{eq_character_A}--\eqref{eq_character_D}. In particular, $V^{U(N)}=\prod_{i<j}(u_i-u_j)$.
Introduce a differential operator acting on symmetric functions in variables $u_1,\dots,u_N$:
\begin{equation}
\label{eq_def_dif}
 \D^{G(N)}_k=\frac{1}{V^{G(N)}} \circ \left(\sum_{i=1}^N \left(u_i \frac{\partial}{\partial
 u_i}\right)^k \right) \circ V^{G(N)},
\end{equation}
where $V^{G(N)}$ in the last formula is understood as an operator of multiplication by $V^{G(N)}$.

\begin{proposition}
\label{Proposition_eigenfunctions} The characters $\chi^{G(N)}_\lambda(u_1,\dots,u_N)$,
$\lambda\in\hG(N)$ are eigenfunctions of $\D^{G(N)}_k$ for all $k=0,1,\dots$ if $G(N)=U(N)$ and for
even $k=0,2,4,\dots$ if $G(N)\ne U(N)$. The corresponding eigenvalues are
$$
 \D^{G(N)}_k \chi^{G(N)}_\lambda(u_1,\dots,u_N) = \sum_{i=1}^N (\mu_i)^k
 \chi^{G(N)}_\lambda(u_1,\dots,u_N),
$$
where (depending on the group $G(N)$) $\mu_i=\lambda_i+N-i$ for $U(N)$ and $SO(2N)$,
$\mu_i=\lambda_i+N-i+1/2$ for $SO(2N+1)$ and $\mu_i=\lambda_i+(N+1)-i$ for $Sp(2N)$.
\end{proposition}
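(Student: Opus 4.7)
The plan is to reduce the statement to a direct computation on the numerator $V^{G(N)} \chi^{G(N)}_\lambda$, which by formulas \eqref{eq_character_A}--\eqref{eq_character_D} is (a sum of) determinants whose $(i,j)$-entries are monomials in $u_i^{\pm 1}$. First I would introduce shorthand $\mu_j$ for the shifted weights as in the statement, so that each relevant numerator is built from matrix entries of the form $u_i^{\mu_j}$ in the $U(N)$ case, $u_i^{\mu_j}-u_i^{-\mu_j}$ in the $B$ and $C$ cases, and $u_i^{\mu_j}\pm u_i^{-\mu_j}$ in the $D$ case (which is a sum of two determinants).

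The key observation is that the Euler operator $u_i\partial/\partial u_i$ acts on a pure power $u_i^{a}$ by multiplication by $a$, hence $(u_i\partial/\partial u_i)^k$ acts on $u_i^{\pm\mu_j}$ by multiplication by $(\pm\mu_j)^k$. When $k$ is even these two constants coincide and equal $\mu_j^k$, so the operator $(u_i\partial/\partial u_i)^k$ multiplies the entire $(i,j)$-entry of each of the determinants above by the scalar $\mu_j^k$. In the $U(N)$ case the entry is already a single monomial, so no parity restriction on $k$ is needed.

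The rest is a standard Leibniz/permutation expansion. Expanding
\[
V^{G(N)}\chi^{G(N)}_\lambda = \sum_{\sigma\in S_N} \mathrm{sgn}(\sigma)\prod_{i=1}^N A^{(\sigma)}_{i,\sigma(i)},
\]
where $A^{(\sigma)}_{i,j}$ denotes the $(i,j)$-entry of the corresponding determinant (possibly with a global sign in the $D$ case), and applying $\sum_{i=1}^N (u_i\partial/\partial u_i)^k$ term by term gives
\[
\sum_{i=1}^N \mu_{\sigma(i)}^k \cdot \prod_{i=1}^N A^{(\sigma)}_{i,\sigma(i)} = \Bigl(\sum_{j=1}^N \mu_j^k\Bigr)\prod_{i=1}^N A^{(\sigma)}_{i,\sigma(i)},
\]
because the sum of a function of $\sigma(i)$ over $i$ equals the sum of that function over all $j$. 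Summing over $\sigma$ (and over the two determinants in the $D$ case) shows that $\sum_i (u_i\partial/\partial u_i)^k$ acts on $V^{G(N)}\chi^{G(N)}_\lambda$ as multiplication by $\sum_j \mu_j^k$, and dividing by $V^{G(N)}$ yields the claimed eigenvalue identity.

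There is no real obstacle; the only point to be careful about is the parity restriction. For odd $k$ and $G(N)\ne U(N)$, the action of $(u_i\partial/\partial u_i)^k$ on $u_i^{\mu_j}$ and on $u_i^{-\mu_j}$ differs by a sign, which breaks the factorization on each matrix entry and hence on each determinant summand, explaining why the eigenfunction property fails in that range.
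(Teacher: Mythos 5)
Your proof is correct and is exactly the argument the paper has in mind: the paper's proof consists of the single line ``Immediate from Weyl characters formulas \eqref{eq_character_A}--\eqref{eq_character_D},'' and what you have written is precisely the expansion of that remark (conjugation by $V^{G(N)}$ reduces to the Weyl numerator, whose entries are monomials $u_i^{\pm\mu_j}$ on which the Euler operator acts diagonally, with the even-$k$ restriction needed to reconcile the $\pm$ signs). Nothing further is required.
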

\noindent {\bf Remark.} In our limit regime $\lambda_i$ grow linearly as $N\to\infty$, thus, the
difference between the definitions of $\mu_i$ for different groups becomes negligible.
\begin{proof}[Proof of Proposition \ref{Proposition_eigenfunctions}]
Immediate from Weyl characters formulas \eqref{eq_character_A}--\eqref{eq_character_D}.
\end{proof}

We also need another family of differential operators in the study of Perelomov--Popov measure,
which are defined as follows.

For $U(N)$ set

$$
 \D^{PP,U(N)}_k=\frac{1}{V^{U(N)}} \circ \left(\sum_{i=1}^N \frac{\partial}{\partial u_i} \left(u_i \frac{\partial}{\partial
 u_i}\right)^{k-1} \right) \circ V^{U(N)}.
$$
For other series ($G(N)\ne U(N)$) set
\begin{equation}
\label{eq_PP_op_BCD}
 \D^{PP,G(N)}_{2k}=\frac{1}{V^{G(N)}} \circ \left(\sum_{i=1}^N (u_i+{u_i}^{-1}) \left(u_i \frac{\partial}{\partial
 u_i}\right)^{2k} \right) \circ V^{G(N)},
\end{equation} and
\begin{equation}
\label{eq_PP_op_BCD_2}
 \D^{PP,G(N)}_{2k+1}=\frac{1}{V^{G(N)}} \circ \left(\sum_{i=1}^N ({u_i}^{-1}-u_i) \left(u_i \frac{\partial}{\partial
 u_i}\right)^{2k+1} \right) \circ V^{G(N)}.
\end{equation}

Let us now explain the interplay between differential operators and moments of random measures
$m[\rho]$.

Let $\rho$ be a probability measure on $\hG(N)$.
\begin{definition}
\label{Def_character_generating} A \emph{character generating function}
$\CG^{G(N)}_\rho(u_1,\dots,u_N)$ is a symmetric Laurent power series  in $(u_1,\dots,u_N)$ given by
$$
 \CG^{G(N)}_\rho(u_1,\dots,u_N)=\sum_{\lambda\in\hG(N)} \rho(\lambda)
 \frac{\chi^{G(N)}_\lambda(u_1,\dots,u_N)}{\chi^{G(N)}_\lambda(1^N)}.
$$
\end{definition}
In what follows we always assume that the measure $\rho$ is such that this (in principle, formal)
sum is uniformly convergent in an open neighborhood of $(1,\dots,1)$. Note that we always have
$\CG^{G(N)}_\rho(1,\dots,1)=1$. In all our examples $\rho$ is such that the sum in Definition
\ref{Def_character_generating} is, actually, finite.

\begin{proposition}
\label{proposition_moments_and_operators_A} Let $\rho$ be a probability measure on $\widehat U(N)$
whose character generating function is well-defined in an open neighborhood of $(1,\dots,1)$. Then
 for $k=1,2,\dots$ the following formula  for the expectations of moments of random
measures $m^{A}[\rho]$ and $m^{A}_{PP}[\rho]$ holds:
\begin{equation}
\label{eq_moments_uniform}
 \mathbb E \left( \int_{\mathbb R} x^k m^A[\rho](dx) \right)^m = \frac{1}{N^{m(k+1)}} \left(\D^{U(N)}_k\right)^m
 \CG^{U(N)}_\rho(u_1,\dots,u_N)\Bigl|_{u_1=\dots=u_N=1},
\end{equation}
\begin{equation}
\label{eq_moments_PP}
 \mathbb E \left( \int_{\mathbb R} x^k m^A_{PP}[\rho](dx) \right) = \frac{1}{N^{m(k+1)}} \left(\D^{U(N),PP}_k\right)
 \CG^{U(N)}_\rho(u_1,\dots,u_N)\Bigl|_{u_1=\dots=u_N=1}.
\end{equation}
\end{proposition}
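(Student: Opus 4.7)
The plan is to relate both sides of \eqref{eq_moments_uniform}--\eqref{eq_moments_PP} to explicit scalar functionals of $\lambda$ and then exploit the spectral properties of the differential operators. Writing $\mu_i := \lambda_i+N-i$ and unwinding the definitions gives
$$
\int x^k\, m^A[\lambda](dx) = \frac{1}{N^{k+1}}\sum_{i=1}^N \mu_i^k,\qquad
\int x^k\, m^A_{PP}[\lambda](dx) = \frac{1}{N^{k+1}}\sum_{i=1}^N \mu_i^k\,\frac{\dim(\lambda^{(i-)})}{\dim(\lambda)}.
$$
So it suffices to show that $(\D^{U(N)}_k)^m$ (respectively $(\D^{PP,U(N)}_k)^m$) applied to $\chi^{U(N)}_\lambda/\chi^{U(N)}_\lambda(1^N)$ and evaluated at $u_1=\cdots=u_N=1$ returns $N^{m(k+1)}$ times the $m$-th power of these scalars. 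Linearity of $\CG^{U(N)}_\rho$ in $\rho$ together with the fact that each normalized character equals $1$ at $u=1^N$ will then yield the expectations on the left.

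For \eqref{eq_moments_uniform} the argument is immediate from Proposition \ref{Proposition_eigenfunctions}: the normalized character $\chi^{U(N)}_\lambda/\chi^{U(N)}_\lambda(1^N)$ is an eigenfunction of $\D^{U(N)}_k$ with eigenvalue $\sum_i \mu_i^k$ (the denominator being a scalar), so iterating $m$ times multiplies by $\bigl(\sum_i\mu_i^k\bigr)^m$, and specialization together with linearity closes the argument.

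The main obstacle is \eqref{eq_moments_PP}, because $\D^{PP,U(N)}_k$ is \emph{not} an eigenoperator. The key auxiliary identity I would establish first is
$$
\D^{PP,U(N)}_k\, \chi^{U(N)}_\lambda \;=\; \sum_{l=1}^N \mu_l^k\, \chi^{U(N)}_{\lambda^{(l-)}},
$$
with the convention $\chi^{U(N)}_{\lambda^{(l-)}}=0$ when $\lambda^{(l-)}$ is not a signature. This follows from a direct manipulation of $\chi_\lambda V^{U(N)}=\det[u_i^{\mu_j}]$: the operator $\partial_i(u_i\partial_i)^{k-1}$ acts on row $i$ by sending $u_i^{\mu_j}\mapsto \mu_j^k u_i^{\mu_j-1}$; summing over $i$ and using multilinearity of the determinant in columns regroups the result as $\sum_l \mu_l^k \det[u_i^{\mu_j-\delta_{jl}}]$; dividing by $V^{U(N)}$ identifies the surviving determinants as characters of $\lambda^{(l-)}$ (a coincidence of two shifted coordinates kills the determinant, which is exactly when $\lambda^{(l-)}$ fails to be a signature). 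Specializing at $u=1^N$ and dividing by $\dim(\lambda)$ immediately yields the $m=1$ case of \eqref{eq_moments_PP} via the Weyl dimension formula.

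For $m\geq 2$ the key identity is iterated to produce
$$
(\D^{PP,U(N)}_k)^m \chi^{U(N)}_\lambda \;=\; \sum_{l_1,\dots,l_m}\prod_{j=1}^m \bigl(\mu_{l_j}-\#\{i<j:l_i=l_j\}\bigr)^k\, \chi^{U(N)}_{\lambda^{(l_1-)\cdots(l_m-)}},
$$
and the technically most delicate step---where I expect the real work to be---is to match the $u=1^N$ specialization, divided by $\dim(\lambda)$ and summed against $\rho(\lambda)$, with $N^{m(k+1)}\,\E\bigl(\int x^k m^A_{PP}[\rho](dx)\bigr)^m$. The matching is to proceed by telescoping the successive dimension ratios $\dim(\lambda^{(l_1-)\cdots(l_j-)})/\dim(\lambda^{(l_1-)\cdots(l_{j-1}-)})$ along each path of coordinate decrements via the Weyl dimension formula, and then reorganizing the resulting combinatorial sum into the $m$-fold product expansion of $\sum_i \mu_i^k\,\dim(\lambda^{(i-)})/\dim(\lambda)$.
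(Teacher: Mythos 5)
Your treatment of \eqref{eq_moments_uniform} and of the $m=1$ case of \eqref{eq_moments_PP} is exactly the paper's argument: the counting-measure identity is an immediate consequence of the eigenrelation of Proposition \ref{Proposition_eigenfunctions}, and your derivation of the key identity $\D^{PP,U(N)}_k \chi^{U(N)}_\lambda = \sum_{l} \mu_l^k\, \chi^{U(N)}_{\lambda^{(l-)}}$ by acting on $\det[u_i^{\mu_j}]$ column by column (with the vanishing of the determinant accounting for non-signatures) is correct and is precisely the identity the paper invokes. So up to and including $m=1$ of \eqref{eq_moments_PP} there is nothing to object to.

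The step you flag as ``the technically most delicate'' for $m\ge 2$ is, however, not merely delicate --- it cannot be carried out as an exact identity, because the two sides you are trying to match are genuinely different. Your iterated formula correctly produces
$\sum_{l_1,\dots,l_m}\prod_{j}\bigl(\mu_{l_j}-\#\{i<j:l_i=l_j\}\bigr)^k\,\dim(\lambda^{(l_1-)\cdots(l_m-)})/\dim(\lambda)$,
whereas the $m$-th power of the first PP moment is
$\sum_{l_1,\dots,l_m}\prod_j \mu_{l_j}^k\,\prod_j \dim(\lambda^{(l_j-)})/\dim(\lambda)$;
the telescoping of Weyl dimension ratios along a path only reassembles the dimension of the \emph{endpoint} of the path, not the product of one-step ratios all based at $\lambda$. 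The simplest counterexample is $N=1$, $k=1$, $m=2$, $\rho=\delta_{(1)}\in\widehat U(1)$: then $\D^{PP,U(1)}_1=\partial_u$, the left side of \eqref{eq_moments_PP} equals $\lambda_1^2=1$, while $(\partial_u)^2 u^{\lambda_1}\bigr|_{u=1}=\lambda_1(\lambda_1-1)=0$. The discrepancy is of relative order $1/N$ per step, so the matching does hold to leading order as $N\to\infty$, which is all that could ever be used --- but it is not the stated identity, and no reorganization of the combinatorial sum will make it one. To be fair, the paper's own proof of the PP half is the single phrase ``use the same argument,'' which glosses over exactly this point; the authors never actually use \eqref{eq_moments_PP} with $m\ge 2$, accessing the Perelomov--Popov measures instead through Theorem \ref{theorem_moment_link_to_PP}. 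If you want a clean statement you can prove, either restrict \eqref{eq_moments_PP} to $m=1$, or prove the asymptotic version with an explicit $O(1/N)$ error, which your path expansion is well suited to do.
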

\begin{proof}
 For the counting measures we have
 $$
 \mathbb E \left( \int_{\mathbb R} x^k m^A[\rho](dx) \right)^m=\sum_{\lambda\in\hG(N)} \rho(\lambda)
 \left( \frac{1}{N} \sum_{i=1}^N \left(\frac{\lambda_i+N-i}{N}\right)^k \right)^m.
 $$
 On the other hand, expanding $\CG^{U(N)}_\rho(u_1,\dots,u_N)$ into the sum of characters and applying
 $\left(\D^{U(N)}_k\right)^m$ using Proposition \ref{Proposition_eigenfunctions} we arrive at the
 same expression, which proves \eqref{eq_moments_uniform}. For the Perelomov--Popov measures note that
$$
 \D^{U(N),PP}_k \chi^{U(N)}_\lambda(u_1,\dots,u_N) = \sum_{i=1}^N (\mu_i)^k
 \chi^{U(N)}_{\lambda^{(i-)}}(u_1,\dots,u_N),
$$
and use the same argument.
\end{proof}

To state an analogue of Proposition \ref{proposition_moments_and_operators_A} for root systems
$B,C,D$ it is convenient to slightly redefine the measures corresponding to the signatures as
follows:
$$
\widehat m^{B}[\lambda] = \frac{1}{2N}  \sum_{i=1}^{N} \left( \delta\left(\frac{ \lambda_i
+N-i+1/2}{2N}\right) + \delta\left(\frac{ i-1/2-\lambda_i-N}{2N}\right) \right),
$$
$$
\widehat m^{C}[\lambda] = \frac{1}{2N} \sum_{i=1}^{N} \left(  \delta\left(\frac{ \lambda_i + N+1 -
i}{2N}\right) +  \delta\left(\frac{i-1 -\lambda_{i}-N}{2N}\right) \right),
$$
$$
\widehat m^{D}[\lambda] = \frac{1}{2N} \sum_{i=1}^{N} \left(  \delta\left(\frac{ \lambda_i
+N-i}{2N}\right) + \delta\left(\frac{i -\lambda_i-N}{2N}\right) \right).
$$
Note that when $N$ is large the above measures (up to a small error) differ from the measures $
m^{B,C,D}[\lambda]$ by a shift by $1/2$. On the other hand, the advantage of the measures $\widehat
m^{B,C,D}[\lambda]$ is that they are \emph{symmetric} with respect to the origin, thus, to study
their asymptotics it is enough to consider only even moments. For the latter we have:

\begin{proposition}
\label{proposition_moments_and_operators_BCD} Let $\rho$ be a probability measure on $\widehat
G(N)$, $G(N)\ne U(N)$ whose character generating function is well-defined in an open neighborhood
of $(1,\dots,1)$, then
 for $k=1,2,\dots$ the following formula holds for the expectations of even moments of random
measures $\widehat m^{G}[\rho]$:
\begin{equation}
\label{eq_moments_uniform_BCD}
 \mathbb E \left( \int_{\mathbb R} x^{2k} \widehat m^G[\rho](dx) \right)^m = \frac{1}{2^{2mk} N^{m(2k+1)}} \left(\D^{G(N)}_{2k}\right)^m
 \CG^{G(N)}_\rho(u_1,\dots,u_N)\Bigl|_{u_1=\dots=u_N=1}.
\end{equation}
\end{proposition}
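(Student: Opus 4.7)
The plan is to mimic the argument used for Proposition \ref{proposition_moments_and_operators_A}, exploiting the two features that make the analogy work: the symmetry of $\widehat m^G[\lambda]$ around the origin, and the fact that the characters $\chi^{G(N)}_\lambda$ diagonalize $\D^{G(N)}_{2k}$ for even indices by Proposition \ref{Proposition_eigenfunctions}.

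First I would check, group by group ($G=B,C,D$), that the atoms of $\widehat m^G[\lambda]$ are exactly $\pm \mu_i/(2N)$, where $\mu_i$ is the eigenvalue parameter from Proposition \ref{Proposition_eigenfunctions}: indeed each definition of $\widehat m^G[\lambda]$ pairs a delta at $\mu_i/(2N)$ with a delta at $-\mu_i/(2N)$ (this is the reason the measures were redefined in this symmetric form). Consequently, since only even powers of $x$ survive, for each signature $\lambda\in\hG(N)$ one gets
\begin{equation*}
\int_{\mathbb R} x^{2k}\,\widehat m^G[\lambda](dx)=\frac{1}{2^{2k}\,N^{2k+1}}\sum_{i=1}^N \mu_i^{2k}.
\end{equation*}
Raising to the $m$th power gives the explicit scalar
\begin{equation*}
\left(\int_{\mathbb R} x^{2k}\,\widehat m^G[\lambda](dx)\right)^m=\frac{1}{2^{2mk}\,N^{m(2k+1)}}\left(\sum_{i=1}^N \mu_i^{2k}\right)^m.
\end{equation*}

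Next I would apply $\left(\D^{G(N)}_{2k}\right)^m$ to the character generating function term by term. By Proposition \ref{Proposition_eigenfunctions}, the operator $\D^{G(N)}_{2k}$ acts on each $\chi^{G(N)}_\lambda(u_1,\dots,u_N)$ as multiplication by $\sum_i \mu_i^{2k}$, so iterating $m$ times produces the scalar $\bigl(\sum_i\mu_i^{2k}\bigr)^m$. Dividing by $\chi^{G(N)}_\lambda(1^N)$ and evaluating at $u_1=\cdots=u_N=1$ gives exactly this scalar. Summing against $\rho(\lambda)$ as in Definition \ref{Def_character_generating} yields
\begin{equation*}
\left(\D^{G(N)}_{2k}\right)^m \CG^{G(N)}_\rho(u_1,\dots,u_N)\Bigl|_{u=1^N}=\sum_{\lambda\in\hG(N)}\rho(\lambda)\left(\sum_{i=1}^N\mu_i^{2k}\right)^m=\mathbb E\left(\sum_{i=1}^N\mu_i^{2k}\right)^m,
\end{equation*}
and combining with the previous display finishes the identity \eqref{eq_moments_uniform_BCD}.

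The only technical point to justify is that the termwise application of the differential operator to the (possibly infinite) sum defining $\CG^{G(N)}_\rho$ is legitimate: this is the assumption of uniform convergence in an open neighborhood of $(1,\dots,1)$ built into the hypothesis, which allows one to differentiate termwise arbitrarily many times inside a possibly smaller neighborhood and then evaluate at $u=1^N$. In the examples relevant to Theorems \ref{Theorem_main} and \ref{Theorem_main_PP} the sum is in fact finite, so this subtlety does not arise.
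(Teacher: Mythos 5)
Your proposal is correct and is exactly the argument the paper intends — its proof of this proposition is literally "Same argument as in Proposition \ref{proposition_moments_and_operators_A}", and you have filled in precisely the right details: the atoms of $\widehat m^G[\lambda]$ are $\pm\mu_i/(2N)$ with $\mu_i$ the eigenvalue parameters, so the $2k$th moment is $2^{-2k}N^{-(2k+1)}\sum_i\mu_i^{2k}$, and the even-index eigenfunction property of $\D^{G(N)}_{2k}$ from Proposition \ref{Proposition_eigenfunctions} does the rest.
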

\begin{proof} Same argument as in Proposition \ref{proposition_moments_and_operators_A}.
\end{proof}

Finally, the moments of the Perelomov--Popov measures $m^G_{PP}[\lambda]$ for root systems $B$, $C$
and $D$ can be extracted using the operators \eqref{eq_PP_op_BCD}, \eqref{eq_PP_op_BCD_2}. We leave
the exact statement to an interested reader.

\section{Asymptotics of random measures}
\label{Section_asymptotics_of_measures}

In this section we explain how the knowledge of the asymptotics of the logarithms of characters can
be used to establish asymptotic results for various measures related to these characters.

\smallskip

Let $\rho(N)$ be a sequence of measures such that for each $N=1,2,\dots$, $\rho(N)$ is a
probability measure on $\hG(N)$.
\begin{theorem}
\label{theorem_moment_convergence}
 Suppose that $\rho(N)$ is such that for every $k$
$$
 \lim_{N\to\infty} \frac{1}{N} \ln( \CG^{U(N)}_{\rho(N)}(u_1,\dots,u_k, 1^{N-k}) ) = Q(u_1)+\dots+Q(u_k),
$$
where $Q$ is an analytic function in a neighborhood of $1$ and the convergence is uniform in an
open (complex) neighborhood of $(1,\dots,1)$. Then random measures $m^A[\rho(N)]$ converge as
$N\to\infty$ in probability, in the sense of moments to a \emph{deterministic} measure $\mes$ on
$\mathbb R$, whose moments are given  by
\begin{equation}
\label{eq_limit_moments_A}
 \int_{\mathbb R} x^k \mes(dx)= \sum_{\ell=0}^k \frac{k!}{\ell! (\ell+1)! (k-\ell)!} \frac{\partial^\ell}{\partial u^\ell}\left(u^k
 Q'(u)^{k-l}\right)\Biggr|_{u=1}.
\end{equation}
\end{theorem}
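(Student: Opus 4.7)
The plan is to use the differential-operator identity of Proposition~\ref{proposition_moments_and_operators_A} to reduce the problem to the asymptotics of $(\D^{U(N)}_k)^m\CG^{U(N)}_{\rho(N)}$ evaluated at $u_1=\dots=u_N=1$, and to extract this from the assumed convergence of $\frac{1}{N}\log\CG^{U(N)}_{\rho(N)}$. By that proposition together with the eigenfunction property of Proposition~\ref{Proposition_eigenfunctions},
\begin{equation*}
 \mathbb E\left(\int_{\mathbb R} x^k\, m^A[\rho(N)](dx)\right)^{\!m}
 = \frac{1}{N^{m(k+1)}}\,\mathbb E_{\rho(N)}\!\left[\left(\sum_{i=1}^N (\lambda_i+N-i)^k\right)^{\!m}\right]
 = \frac{(\D^{U(N)}_k)^m\CG^{U(N)}_{\rho(N)}}{N^{m(k+1)}}\bigg|_{u_1=\dots=u_N=1}.
\end{equation*}
Showing this tends to $M_k(Q)^m$ for every $m\ge 1$, where $M_k(Q)$ denotes the right-hand side of~\eqref{eq_limit_moments_A}, identifies the limiting $k$-th moment of $\mes$ (case $m=1$) and yields concentration via vanishing variance (case $m=2$), hence convergence in probability by Chebyshev.

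The technical heart is the \emph{single-step asymptotic}: for any symmetric $f_N$ satisfying $\frac{1}{N}\log f_N(u_1,\dots,u_k,1^{N-k})\to Q(u_1)+\dots+Q(u_k)$ uniformly in a complex neighborhood of $(1,\dots,1)$,
\begin{equation*}
 \frac{(\D^{U(N)}_k f_N)(u_1,\dots,u_N)}{f_N(u_1,\dots,u_N)}\bigg|_{u_1=\dots=u_N=1} = N^{k+1}\cdot M_k(Q) + o(N^{k+1}).
\end{equation*}
To derive it I would apply the Leibniz rule to $(u_i\partial_{u_i})^k(V^{U(N)} f_N)$ inside the definition of $\D^{U(N)}_k$, then split each resulting term by the binomial theorem into $\ell$ differentiations acting on $V^{U(N)}$ and $k-\ell$ acting on $f_N$ (equivalently, on $\log f_N$ after dividing by $V^{U(N)}f_N$). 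Individual summands over $i$ are singular at the confluent point $u=(1,\dots,1)$ because of the $1/V^{U(N)}$ prefactor, but $\sum_{i=1}^N$ is regular by antisymmetry and can be evaluated via a small contour-integral / residue argument around $u=1$. The derivatives of $\log f_N$ produce factors $N\,u\,Q'(u)|_{u=1}$ (by uniform convergence and Cauchy's estimates), while the Vandermonde contribution combined with the residue summation yields the combinatorial weight $\frac{k!}{\ell!(\ell+1)!(k-\ell)!}$ together with the factor $u^k$, reproducing exactly the sum in~\eqref{eq_limit_moments_A}.

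I would then propagate the single-step asymptotic to $(\D^{U(N)}_k)^m$ by induction on $m$. Since $\D^{U(N)}_k$ is diagonal on the Schur basis, the iterate is the reweighted generating function
\[
 (\D^{U(N)}_k)^{j-1}\CG^{U(N)}_{\rho(N)} = \sum_{\lambda}\rho(\lambda)\bigl(\textstyle\sum_i(\lambda_i+N-i)^k\bigr)^{j-1}\,\chi^{U(N)}_\lambda(u_1,\dots,u_N)/\chi^{U(N)}_\lambda(1^N).
\]
At stage $j$ I would apply the single-step lemma with $f_N$ replaced by this tilted function, after verifying that the tilting preserves the required asymptotic form of $\frac{1}{N}\log f_N$ in a complex neighborhood of $(1,\dots,1)$; this follows inductively because the reweighting eigenvalues $\sum_i(\lambda_i+N-i)^k$ are concentrated near $N^{k+1}M_k(Q)$, so the tilt perturbs the normalized logarithm only by $O(N^{-1}\log N)$ uniformly on a complex neighborhood of the diagonal (this is where the uniformity clause in the theorem's hypothesis is essential). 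Iterating $m$ times yields $(N^{k+1}M_k(Q))^m(1+o(1))$ and, after dividing by $N^{m(k+1)}$, completes the proof. The main obstacle is the single-step asymptotic --- specifically the residue-calculus extraction of the coefficient $\frac{k!}{\ell!(\ell+1)!(k-\ell)!}$ at the confluent point, which requires careful bookkeeping of how $\sum_{i=1}^N$ cancels the $1/V^{U(N)}$ singularities arising from the Leibniz expansion.
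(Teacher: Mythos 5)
Your reduction via Proposition \ref{proposition_moments_and_operators_A} and your treatment of the $m=1$ case (Leibniz expansion of $(u_i\partial_{u_i})^k(V\cdot f_N)$, cancellation of the $1/V$ singularities after symmetrization, residue evaluation at the confluent point producing the weights $\frac{k!}{\ell!(\ell+1)!(k-\ell)!}$) coincide with the paper's argument. The gap is in your handling of $m\ge 2$. You propose an induction on $m$ in which, at stage $j$, the single-step lemma is applied to the tilted function $(\D^{U(N)}_k)^{j-1}\CG^{U(N)}_{\rho(N)}=\sum_\lambda\rho(\lambda)W(\lambda)^{j-1}\chi_\lambda/\chi_\lambda(1^N)$ with $W(\lambda)=\sum_i(\lambda_i+N-i)^k$, and you justify that the tilt preserves the hypothesis on $\frac1N\log f_N$ by asserting that $W(\lambda)$ is concentrated near $N^{k+1}M_k(Q)$ under $\rho(N)$. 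That concentration is equivalent to $\E[W^2]-(\E[W])^2=o(N^{2(k+1)})$, i.e.\ it is precisely the $m=2$ conclusion of the theorem; at stage $j=2$ of your induction only $\E[W]$ is available, so the argument is circular as stated. (There is also a secondary difficulty: even granting concentration of $W$, the characters $\chi_\lambda(u)/\chi_\lambda(1^N)$ at \emph{complex} $u$ are not positive, so passing from concentration of the weights to an $O(N^{-1}\log N)$ perturbation of $\frac1N\log$ of the tilted sum requires an upper bound on $\sup_\lambda|\chi_\lambda(u)/\chi_\lambda(1^N)|$ relative to $|\CG^{U(N)}_{\rho(N)}(u)|$ on a complex neighborhood, which the hypothesis does not supply.)

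The paper avoids induction entirely: it expands $(\D^{U(N)}_k)^m\CG^{U(N)}_{\rho(N)}$ for general $m$ directly via the Leibniz rule into terms classified by combinatorial type and support, shows that any term in which a derivative falls on the correction factor $T_N$ (where $\CG=\exp(N\sum_iQ(u_i))T_N$) loses a power of $N$, and then observes that the resulting leading expression for $m=2$ and the square of the leading expression for $m=1$ differ only by the disjointness constraint $M\cap M'=\emptyset$ on the index sets, which is automatically satisfied by the maximal-support terms and hence does not affect the leading order. To repair your proof you could either adopt this direct comparison of the $m=2$ expansion with the square of the $m=1$ expansion, or strengthen your single-step lemma so that its conclusion is the uniform convergence of $\frac1N\log(\D^{U(N)}_kf_N)$ on a complex neighborhood of $(1,\dots,1)$ (not merely the value of $\D^{U(N)}_kf_N/f_N$ at the single point $u=1$), which would make the induction self-sustaining; as written, neither is in place.
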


\begin{theorem}
\label{theorem_moment_convergence_BCD}
 Suppose that for $G(N)\ne U(N)$, $\rho(N)$ is such that for every $k$
$$
 \lim_{N\to\infty} \frac{1}{N} \ln( \CG^{G(N)}_{\rho(N)}(u_1,\dots,u_k, 1^{N-k}) ) = Q(u_1)+\dots+Q(u_k),
$$
where $Q$ is an analytic function in a neighborhood of $1$ and the convergence is uniform in an
open (complex) neighborhood of $(1,\dots,1)$. Then random measures $\widehat m^G[\rho(N)]$ converge
as $N\to\infty$ in probability, in the sense of moments to a \emph{deterministic} measure $\mes$ on
$\mathbb R$, whose odd moments are zero, while even moments are given  by
\begin{equation}
\label{eq_limit_moments_BCD}
 \int_{\mathbb R} x^{2k} \mes(dx)= 2^{-2k} \sum_{\ell=0}^{2k} \frac{(2k)!}{\ell! (\ell+1)! (2k-\ell)!}
 \frac{\partial^\ell}{\partial z^{\ell}}\left((z^2-1)^k
 \widehat Q'(z)^{2k-l}\right)\Biggr|_{z=1},
\end{equation}
where $\widehat Q(z)$ is defined through
$$
 \widehat Q\left(\frac{u+u^{-1}}2\right)= Q(u).
$$
\end{theorem}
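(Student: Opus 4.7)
The strategy is to derive both the first-moment limit and concentration from Proposition~\ref{proposition_moments_and_operators_BCD}, essentially reducing to the $A$-case analysis behind Theorem~\ref{theorem_moment_convergence} after exploiting the $u_i \leftrightarrow u_i^{-1}$ symmetry of classical characters.

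\textbf{Step 1: moments as differential operators.} By Proposition~\ref{proposition_moments_and_operators_BCD},
\begin{equation*}
\E\left(\int x^{2k}\widehat m^G[\rho(N)](dx)\right)^m = \frac{1}{2^{2mk}N^{m(2k+1)}}(\D_{2k}^{G(N)})^m \CG^{G(N)}_{\rho(N)}\bigr|_{u_1=\cdots=u_N=1},
\end{equation*}
while the odd moments of $\widehat m^G[\rho(N)]$ vanish identically, since $\widehat m^G[\lambda]$ is symmetric about $0$ for every $\lambda$. It therefore suffices to compute the limit of the right-hand side for each fixed $k$ and $m$, and to check concentration by showing vanishing variance of each moment.

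\textbf{Step 2: reduction to $z$-variables.} Since every character $\chi^{G(N)}_\lambda$ is invariant under each $u_i \mapsto u_i^{-1}$, so is $\CG^{G(N)}_{\rho(N)}$, and we may work in the symmetric variables $z_i = (u_i + u_i^{-1})/2$. In these variables the Weyl denominator from \eqref{eq_character_B}--\eqref{eq_character_D} factorises as an explicit non-vanishing prefactor times the Vandermonde $\prod_{i<j}(z_i - z_j)$; the hypothesis on the character generating function translates into $N^{-1}\log\CG^{G(N)}_{\rho(N)}(z_1,\dots,z_k,1^{N-k}) \to \sum_{j=1}^k \widehat Q(z_j)$ uniformly near $z_j = 1$; and the operator $u_i\partial_{u_i}$, acting on $u_i \leftrightarrow u_i^{-1}$-invariant functions, behaves as a first-order operator with symbol $\sqrt{z_i^2-1}\,\partial_{z_i}$, so $(u_i\partial_{u_i})^{2k}$ is an order-$2k$ operator in $z_i$ with leading symbol $(z_i^2-1)^k\partial_{z_i}^{2k}$ and lower-order corrections.

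\textbf{Step 3: asymptotics and concentration.} The calculation now follows the same template as the $A$-case Theorem~\ref{theorem_moment_convergence}. Because of the $V^{G(N)}$-conjugation in \eqref{eq_def_dif}, the value $\D_{2k}^{G(N)}\CG^{G(N)}_{\rho(N)}|_{u=1}$ admits a contour-integral representation in the $z$-variables; applying Leibniz rule to the quasi-exponential form of $\CG^{G(N)}_{\rho(N)}$ and inserting the asymptotics $\frac{1}{N}\partial_{z}^{\ell}\log\CG^{G(N)}_{\rho(N)}|_{z=1} \to \widehat Q^{(\ell)}(1)$ given by Step~2, only the leading-order-in-$N$ terms survive the normalisation $N^{-(2k+1)}$. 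A Lagrange-inversion-type residue identity at $z=1$ then converts the result into the sum \eqref{eq_limit_moments_BCD}, the factor $(z^2-1)^k$ being precisely the leading symbol from Step~2 and the exponent $2k-\ell$ reflecting the total order $2k$ of the operator. For $m\geq 2$ one iterates: cross-derivatives between different factors of $\D_{2k}^{G(N)}$ in $(\D_{2k}^{G(N)})^m$ are of strictly lower order in $N$ than the diagonal ones, so the normalised $m$-th iterate factorises in the limit as the $m$-th power of the $m=1$ limit. This gives $\mathrm{Var}\!\left(\int x^{2k}\widehat m^G[\rho(N)](dx)\right)\to 0$, hence convergence in probability in the sense of moments, and the compactness of supports (which follows from regularity of the underlying signatures) ensures the limit measure $\mes$ is uniquely determined by these moments.

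\textbf{Main obstacle.} The principal technical difficulty is the asymptotic extraction in Step~3. Since $V^{G(N)}$ vanishes at $u=(1,\dots,1)$, the action of $\D_{2k}^{G(N)}$ at this point is formally of indeterminate type and must be resolved by a residue/contour computation; matching the resulting expression to the explicit formula \eqref{eq_limit_moments_BCD} requires a Lagrange-inversion identity, and the factorisation of iterated applications needed for concentration demands careful bookkeeping of the powers of $N$ carried by cross-derivatives of $V^{G(N)}$ and $\CG^{G(N)}_{\rho(N)}$. This is exactly the same technical bottleneck as in the $A$-case; the $z$-substitution of Step~2 is precisely what allows the same residue machinery to apply with only the leading-symbol change $u^k \leadsto (z^2-1)^k$ and $Q \leadsto \widehat Q$.
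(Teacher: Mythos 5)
Your proposal follows essentially the same route as the paper: express the moments via Proposition \ref{proposition_moments_and_operators_BCD}, pass to the variables $z_i=(u_i+u_i^{-1})/2$ where $(u_i\partial_{u_i})^{2k}$ has leading symbol $(z_i^2-1)^k\partial_{z_i}^{2k}$, and then rerun the type-$A$ Leibniz/power-counting argument of Theorem \ref{theorem_moment_convergence} to get both the limit formula and the concentration. One small correction: for the root systems $B$ and $C$ the Weyl denominator is not a non-vanishing prefactor times the Vandermonde in $z$ --- the single-variable factors $\prod_i(u_i^{\alpha}-u_i^{-\alpha})$ vanish at $u_i=1$, and the paper verifies explicitly (identity \eqref{eq_x16}, using that $\alpha=1$ or $1/2$) that conjugation by them only adds lower-order terms that are analytic functions of $z$, which is what licenses the leading-symbol reduction you invoke.
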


\noindent {\bf Remark 1.} Theorem \ref{theorem_moment_convergence} is inspired by the results of the paper
\cite{BBO}, which was in
preparation when this project started. In particular, Theorem \ref{theorem_moment_convergence} can be used to get an
alternative proof of the limit shape theorem for the
decompositions of restrictions of the characters of $U(\infty)$, cf.\ \cite[Theorem
3.2]{BBO}. Our techniques are different from
those
of \cite{BBO}: the latter used differential operators on the group, while we use differential operators on the
eigenvalues. One advantage of our approach is that it is generalized to symplectic and orthogonal groups with
relatively small modifications; on the other hand, as far as the authors know, the group approach is not yet developed
in this direction.

\noindent{\bf Remark 2.} Note that characters for root systems $B,C$ and for root systems $D$ when
at least one of the variables is set to $1$, are polynomials in $u_i+u_i^{-1}$. This guarantees
that an analytic $\widehat Q$ in Theorem \ref{theorem_moment_convergence_BCD} exists.

\noindent{\bf Remark 3.} The key part in Theorems \ref{theorem_moment_convergence},
\ref{theorem_moment_convergence_BCD} is that the moments of the limit measures $\mes$ are uniquely
defined by $Q(u)$; the exact form of this dependence is less important.

\noindent{\bf Remark 4.} Note that in Theorem \ref{theorem_moment_convergence_BCD} we deal with
measures $\widehat m^G[\rho(N)]$. However, since asymptotically as $N\to\infty$ they differ from $
m^G[\rho(N)]$ by the deterministic shift by $1/2$, the convergence to deterministic limits holds
also for $m^G[\rho(N)]$.

\smallskip

In the proof of Theorems \ref{theorem_moment_convergence},
\ref{theorem_moment_convergence_BCD} we will use the operators $\D^{G(N)}_{k}$.
Although, one can study the Perelomov--Popov measures using the operators
$\D^{G(N),PP}_{k}$, we will use another (simpler) way to access them relying on the
following statement.
\begin{theorem}
\label{theorem_moment_link_to_PP}
 Suppose that $\rho(N)$ is such that random measures $m^G[\rho(N)]$ converge as
$N\to\infty$ in probability, in the sense of moments to a \emph{deterministic} measure $\mes$ on
$\mathbb R$. Then random measures $m^G_{PP}[\rho(N)]$ also converge as $N\to\infty$ in probability,
in the sense of moments to a \emph{deterministic} measure $\mes_{PP}$ on $\mathbb R$. Moreover, if
we set
$$
 s_k=\int_{\mathbb R} x^k \mes(dx),\quad c_k=\int_{\mathbb R} x^k \mes_{PP}(dx), \quad k=0,1,2,\dots
$$
then
$$1 - \sum_{k=0}^{\infty} c_k z^{k+1} = \exp \left(- \sum_{k=0}^{\infty} s_k z^{k+1} \right).$$
\end{theorem}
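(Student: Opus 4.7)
The plan is to derive a deterministic finite-$N$ identity that exponentiates the moment generating function relation of the theorem up to an $O(1/N)$ error, and then pass to the limit $N\to\infty$ using the hypothesis on $m^G[\rho(N)]$. The key input is a residue calculation for a certain rational function, which I describe first in the unitary case.

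For $G=A$, set $y_i=\lambda_i-i$ and consider
$$
F(\zeta)=\prod_{i=1}^N\frac{\zeta-y_i-1}{\zeta-y_i}.
$$
Since $F(\infty)=1$ and $F$ has a simple pole at each $\zeta=y_i$ with residue $-w_i$, where $w_i=\prod_{j\ne i}\frac{y_i-y_j-1}{y_i-y_j}$ is precisely the Perelomov--Popov weight in \eqref{eq_PP_main_text}, partial fractions give $F(\zeta)=1-\sum_i w_i/(\zeta-y_i)$. Rescaling via $\zeta-y_i=N(\widetilde\eta-z_i)$ with $z_i=(\lambda_i+N-i)/N$ turns this into $F=1-G^{PP}_N(\widetilde\eta)$, where $G^{PP}_N(\widetilde\eta)=\frac{1}{N}\sum_i w_i/(\widetilde\eta-z_i)$ is the Stieltjes transform of $m^A_{PP}[\lambda]$. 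On the other hand, expanding the logarithm gives
$$
\log F(\zeta)=\sum_i\log\!\left(1-\frac{1}{\zeta-y_i}\right)=-\sum_{m\ge1}\frac{1}{mN^m}\sum_i(\widetilde\eta-z_i)^{-m},
$$
whose $m=1$ piece is $-G^m_N(\widetilde\eta)$, the Stieltjes transform of $m^A[\lambda]$, while each $m\ge2$ piece is $O(N^{-(m-1)})$ uniformly for $\widetilde\eta$ in a neighborhood of $\infty$, provided the moments of $m^A[\lambda]$ remain bounded. Combining the two expressions for $F$ yields the finite-$N$ identity
$$
1-G^{PP}_N(\widetilde\eta)=\exp\!\bigl(-G^m_N(\widetilde\eta)\bigr)+O(1/N),
$$
which, after substituting $z=1/\widetilde\eta$ and expanding both Stieltjes transforms as $\sum_k s_k z^{k+1}$ and $\sum_k c_k z^{k+1}$, reproduces the identity of the theorem modulo $O(1/N)$ at every degree.

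The hypothesis that $m^A[\rho(N)]\to\mes$ in moments in probability means each empirical moment $s_k$ converges in probability to $s_k^\infty=\int x^k\,\mes(dx)$, and since $\mes$ has compact support all moments stay bounded in probability, which controls the $O(1/N)$ error. The triangular structure of the exponentiated identity then expresses each $c_k$ as a polynomial in $s_0,\ldots,s_k$ (up to the controlled error), so $c_k$ converges in probability to the corresponding $c_k^\infty$ given by the exact limit identity. The $c_k^\infty$ grow at most geometrically, so Hausdorff's theorem gives a unique compactly supported measure $\mes_{PP}$ with these moments; positivity is automatic as $\mes_{PP}$ is the weak-topology limit in probability of the probability measures $m^G_{PP}[\rho(N)]$.

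For root systems $B$, $C$, $D$ the same scheme applies with the product extended to the $\mathbb Z_2$-symmetric tuple obtained by adjoining the negatives of the shifted coordinates (and the middle coordinate for $SO(2N+1)$); for instance, for $G=C$ with $a_i=\lambda_i+N+1-i$ one uses
$$
F^C(\zeta)=\prod_{k=1}^{2N}\frac{\zeta-b_k-1}{\zeta-b_k},\qquad (b_1,\ldots,b_{2N})=(a_1,\ldots,a_N,-a_1,\ldots,-a_N),
$$
whose residues at $\zeta=\pm a_i$ encode the two Perelomov--Popov weights of Definition \ref{Definition_PP} by the Weyl dimension formula for $Sp(2N)$; the logarithmic expansion recovers the Stieltjes transform of the symmetrized counting measure $\widehat m^C[\lambda]$, and the rest of the argument is unchanged. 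The main obstacle is verifying that the residues of these auxiliary rational functions really do reproduce the dimension ratios in Definition \ref{Definition_PP} across all three series $B$, $C$, $D$: this is an explicit ratio-of-products computation based on the Weyl dimension formulas, needing extra care for the isolated ``middle'' atom at $N/(2N+1)$ appearing in $m^B_{PP}$ from the half-integer shifts in the $SO(2N+1)$ character formula.
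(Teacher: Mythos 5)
Your route is genuinely different from the paper's. The paper disposes of Theorem \ref{theorem_moment_link_to_PP} by quoting the explicit finite-$N$ formulas of Perelomov--Popov and Popov expressing each $c_k[\lambda]$ as a polynomial in $s_1[\lambda],\dots,s_k[\lambda]$ with $N$-dependent coefficients, and then observing that these formulas degenerate to the exponential identity as $N\to\infty$. You instead re-derive the identity from scratch: for type $A$ the partial-fraction expansion of $F(\zeta)=\prod_i\frac{\zeta-y_i-1}{\zeta-y_i}$ does have residue $-w_i$ at $\zeta=y_i$ with $w_i$ exactly the weight in \eqref{eq_PP_main_text}, the logarithmic expansion correctly isolates the Stieltjes transform of the counting measure plus an $O(1/N)$ tail, and the Cauchy estimates turn the uniform $O(1/N)$ bound into a coefficientwise one. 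For $G=A$ this is a complete, self-contained and arguably more transparent proof than the citation in the paper; what it buys is independence from \cite{Popov}, \cite{Popov2}, at the cost of having to redo the combinatorics for the other series.

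That cost is where the real gap sits, and it is not merely the bookkeeping you flag. For type $C$, writing $l_i=\lambda_i+N+1-i$, the Weyl dimension formula gives
$$
 \frac{\dim(\lambda^{(i-)})}{\dim(\lambda)}=\frac{l_i-1}{l_i}\prod_{j\ne i}\frac{(l_i-l_j-1)(l_i+l_j-1)}{(l_i-l_j)(l_i+l_j)},
$$
whereas the residue of your $F^C$ at $\zeta=l_i$ (with $b=(l_1,\dots,l_N,-l_1,\dots,-l_N)$) carries the self-pairing factor $\frac{2l_i-1}{2l_i}$ in place of $\frac{l_i-1}{l_i}$; these can differ by an $O(1)$ amount when $l_i$ is small (e.g.\ $l_N=1$ gives $0$ versus $1/2$), so the naive symmetrized product does \emph{not} reproduce Definition \ref{Definition_PP} exactly. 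The discrepancy per atom is $\frac{1}{2l_i}$ times a bounded quantity, and since $l_i\ge N-i+1$ the total effect on any fixed moment is $O(\log N/N)$, so the argument can be repaired --- but you must either prove this asymptotic negligibility explicitly or modify the rational function (adjusting the pole/zero configuration so that its residues match the dimension ratios for each of $B$, $C$, $D$, including the extra atom at $\frac{N}{2N+1}$ for $SO(2N+1)$). As written, the $B$, $C$, $D$ half of the theorem is asserted rather than proved.
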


Note that this is Theorem \ref{theorem_moment_link_to_PP} which serves as a motivation for the
definition of the map $\rho\mapsto Q(\rho)$ of Theorem \ref{theorem_intertwining}. In the rest of
this section we prove the above three theorems and also Theorem \ref{theorem_intertwining}.

\subsection{Two lemmas}

The following technical lemmas will be crucial for our analysis.

\begin{lemma}
\label{Lemma_Vandermonde_sum} Take $n>0$, let $I^{(n)}$ be the set of all pairs $1\le a<b\le n$ and
suppose that $P\subset I^{(n)}$. Let $f(z_1,\dots,z_n)$ be an analytic function in a neighborhood
of $(1,\dots,1)$, and set
$$
 f_P(z_1,\dots,z_n)=Sym\left(\frac{f(z_1,\dots,z_n)}{\prod_{(a,b)\in P}
 (z_a-z_b)}\right)=\frac{1}{n!}\sum_{\sigma\in S(n)} \frac{f(z_{\sigma(1)},\dots,z_{\sigma(n)})}{\prod_{(a,b)\in P}
 (z_{\sigma(a)}-z_{\sigma(b)})}.
$$
Then $f_P$ is also an analytic function in a (perhaps, smaller) neighborhood of $(1,\dots,1)$.
Further, if $f^t(z_1,\dots,z_n)$, $t=1,2,\dots$ is a sequence of analytic functions converging to
$0$ uniformly in a neighborhood of $(1,\dots,1)$, then so is the sequence $f_P^t(z_1,\dots,z_n)$.
\end{lemma}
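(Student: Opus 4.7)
\textbf{Proof plan for Lemma \ref{Lemma_Vandermonde_sum}.}

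The plan is to first establish the analyticity of $f_P$ near $(1,\ldots,1)$, and then deduce the uniform convergence claim from a Cauchy integral bound.

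For analyticity, I will view each summand $T_\sigma(z) = f(z_{\sigma(1)},\ldots,z_{\sigma(n)})/\prod_{(a,b)\in P}(z_{\sigma(a)} - z_{\sigma(b)})$ as a meromorphic function on a polydisk $U$ around $(1,\ldots,1)$ where $f$ is analytic; its poles lie inside the diagonal locus $\bigcup_{i<j}\{z_i = z_j\}$. Two observations drive the argument. First, $T_\sigma$ has at most a \emph{simple} pole along each hyperplane $\{z_i = z_j\}$: a factor $z_{\sigma(a)}-z_{\sigma(b)}$ is proportional to $z_i - z_j$ exactly when $\{\sigma(a),\sigma(b)\} = \{i,j\}$, which forces $\{a,b\} = \{\sigma^{-1}(i),\sigma^{-1}(j)\}$, and since elements of $P$ satisfy $a<b$ at most one ordered pair from this unordered set can lie in $P$. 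Second, the reindexing $\tau = \pi\sigma$ in the defining sum shows directly that $f_P$ is symmetric in $z_1,\ldots,z_n$. Combining these: for fixed $i<j$ the function $(z_i - z_j)f_P$ is analytic across $\{z_i = z_j\}$ yet antisymmetric under $z_i \leftrightarrow z_j$; an analytic antisymmetric function vanishes on the diagonal of antisymmetry, so $f_P$ itself has a removable singularity there. Doing this for each pair $i<j$ yields analyticity on all of $U$.

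For the convergence claim, I will choose pairwise distinct positive radii $r_1<\cdots<r_n$ small enough that the closed polydisk $\prod_j\{|z_j - 1|\le r_j\}$ lies inside the common domain of the $f^t$. On the distinguished torus $\Gamma = \prod_j\{|z_j - 1| = r_j\}$, the distinctness forces $|z_i - z_j| \ge |r_i - r_j|>0$ for $i\ne j$, so uniformly on $\Gamma$
$$|T_\sigma^t(z)| \le \|f^t\|_{\infty}\,\bigl(\min_{i\ne j}|r_i-r_j|\bigr)^{-|P|}.$$
Summing over $\sigma$ gives $\sup_\Gamma |f_P^t| \le C_P \,\|f^t\|_\infty$ with $C_P$ independent of $t$. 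Since $f_P^t$ is analytic throughout the polydisk by the first part, Cauchy's integral formula over $\Gamma$ bounds $|f_P^t(w)|$ on the smaller polydisk where $|w_j-1|<\min_k r_k$ for all $j$ by a constant multiple of $\|f^t\|_\infty$, which tends to zero.

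The main obstacle is the simple-pole bound in the first part: a priori one might fear that several denominator factors could all be proportional to $z_i - z_j$, producing higher-order poles and blocking the symmetry trick; the resolution uses the $a<b$ convention on elements of $P$ together with the bijection $(a,b)\leftrightarrow(\sigma(a),\sigma(b))$ to pin down the offending pair uniquely. Once analyticity is in hand, the Cauchy step is routine but crucially needs the choice of distinct coordinate radii so that the integration torus stays clear of every diagonal $\{z_i = z_j\}$.
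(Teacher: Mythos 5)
Your proof is correct, but it takes a genuinely different route from the paper's. The paper works with the shifted variables $x_i=z_i-1$, treats monomials $x_1^{k_1}\cdots x_n^{k_n}$ first, observes that $f_P\cdot\prod_{i<j}(x_i-x_j)$ is a skew-symmetric polynomial and hence divisible by the Vandermonde, and then controls the resulting symmetric polynomial quantitatively via Schur polynomials and the combinatorial formula: the coefficients of $f_P$ are supported in degrees within $n^2$ of $k_1+\dots+k_n$ and their total mass is bounded by a polynomial in the $k_i$. Summing these bounds over the Taylor expansion of a general analytic $f$ gives both analyticity (in a slightly smaller polydisk) and the uniform-convergence statement. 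You replace all of this with a complex-analytic argument: a simple-pole count along each diagonal hyperplane (your observation that the $a<b$ convention pins down at most one vanishing factor is exactly right), symmetry of $f_P$ forcing $(z_i-z_j)f_P$ to vanish on $\{z_i=z_j\}$ so the singularity is removable, and then a Cauchy estimate over a distinguished torus with pairwise distinct radii that stays clear of every diagonal. This is shorter, avoids the coefficient bookkeeping entirely, and yields analyticity on the full symmetrized domain of $f$ rather than a shrunken one; the paper's version, in exchange, produces explicit coefficient estimates (which, as it happens, are not used elsewhere — only analyticity and the convergence statement are invoked later). Two small points to tighten: after removing the singularity along the generic part of each hyperplane $\{z_i=z_j\}$ you still need to extend across the pairwise intersections of distinct diagonals; this set has complex codimension two, so the second Riemann extension theorem applies (alternatively, run the antisymmetry--divisibility argument on the everywhere-analytic function $f_P\cdot\prod_{i<j}(z_i-z_j)$). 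And in the final Cauchy step the denominator $|z_j-w_j|$ is not uniformly bounded below on the whole open polydisk $|w_j-1|<\min_k r_k$; restrict to, say, $|w_j-1|\le\tfrac12\min_k r_k$ to get a constant independent of $t$, which is all that is needed.
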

\begin{proof} We will shift the variables $z_i=1+x_i$ and argue in terms of $x_i$.
 First, suppose that $f$ is a monomial, $f=x_1^{k_1}\cdots x_n^{k_n}$, then we have
\begin{multline*}
 f_P(x_1,\dots,x_n) \prod_{i<j}(x_i-x_j)= \frac{1}{n!}\sum_{\sigma\in S(n)}
 (-1)^{\sigma}\frac{(x_{\sigma(1)}^{k_1},\dots,x_{\sigma(n)}^{k_n})\prod_{i<j}(x_{\sigma(i)}-x_{\sigma(j)})}{\prod_{(a,b)\in P}
 (x_{\sigma(a)}-x_{\sigma(b)})}
 \\= \frac{1}{n!}\sum_{\sigma\in S(n)} (-1)^{\sigma} x_{\sigma(1)}^{k_1}\cdots x_{\sigma(n)}^{k_n} {\prod_{(a,b)\in  I^{(n)} \setminus P}
 (x_{\sigma(a)}-x_{\sigma(b)})},
\end{multline*}
where $(-1)^{\sigma}$ is $\pm 1$ depending on whether a permutation $\sigma$ is even or odd. This
expansion shows that $f_P(x_1,\dots,x_n) \prod_{i<j}(x_i-x_j)$ is a skew--symmetric polynomial in
$x_1,\dots,x_n$, thus, it is divisible by $\prod_{i<j}(x_i-x_j)$ and $f_P(x_1,\dots,x_n)$ is a
symmetric polynomial in $x_1,\dots,x_n$ (in particular, it is analytic in any neighborhood of
$(0,\dots,0)$).

Further, observe that $n! f_P(x_1,\dots,x_N) \prod_{i<j}(x_i-x_j)$ is a (signed) sum of at most
$n!$ elementary skew-symmetric polynomials
\begin{equation}
\label{eq_alt_sum}
 \sum_{\sigma\in S(n)} (-1)^{\sigma} x_{\sigma(1)}^{m_1}\cdots x_{\sigma(n)}^{m_n},
\end{equation}
and $|m_i-k_i|\le n$. When we divide the alternating sum \eqref{eq_alt_sum} by
$\prod_{i<j}(x_i-x_j)$ we arrive at Schur polynomial $s_\lambda(x_1,\dots,x_n)$. (Here
$m_i=\lambda_i+n-i$.). Now we can expand Schur polynomials into monomials using the
\emph{combinatorial formula} (see e.g.\ \cite[Chapter I]{M}) for them. The total degree for each
monomial in this expansion is $m_1+\dots+m_n$, the coefficients are non-negative integers, and the
weighted number of terms is $s_\lambda(1^n)$, which simplifies to a polynomial in $\lambda_i$,
using \eqref{eq_Weyl_dimension}. We conclude that for $f=x_1^{k_1}\cdots x_n^{k_n}$, we have
\begin{equation}
\label{eq_P_transform}
 f_P=\sum_{p_1\ge0,\dots,p_n\ge 0} d^{k_1,\dots,k_n}_{p_1,\dots,p_n} x_1^{p_1}\cdots x_n^{p_n},
\end{equation}
 where coefficients $d^{k_1,\dots,k_n}_{p_1,\dots,p_n}$ vanish unless $|\sum_i k_i-\sum_i
p_i|\le n^2$ and
\begin{equation}
\label{eq_estimate}
 \sum_{p_1\ge0,\dots,p_n\ge 0} |d^{k_1,\dots,k_n}_{p_1,\dots,p_n}| \le g(k_1,\dots,k_n),
\end{equation}
where $g$ is a certain polynomial in $k_1,\dots,k_n$.

Now let $f$ be an analytic function in the neighborhood of $(0,\dots,0)$, i.e.
\begin{equation}
\label{eq_x1}
 f=\sum_{k_1\ge 0,\dots, k_n\ge 0} c_{k_1,\dots,k_n} x_1^{k_1}\cdots x_n^{k_n}.
\end{equation}
The convergence of \eqref{eq_x1} implies that for some $R>0$,
\begin{equation}
\label{eq_x2} \sum_{k_1\ge 0,\dots, k_n\ge 0} c_{k_1,\dots,k_n} R^{k_1+\dots+k_n}<\infty.
\end{equation}
Plugging \eqref{eq_P_transform} into \eqref{eq_x1} we get the expansion
\begin{equation}
\label{eq_x1_P}
 f_P=\sum_{k_1\ge 0,\dots, k_n\ge 0} c^P_{k_1,\dots,k_n} x_1^{k_1}\cdots x_n^{k_n}.
\end{equation}
But now estimate \eqref{eq_estimate} together with \eqref{eq_x2} yields that for any $0<\eps<R$,
$$
 \sum_{k_1\ge 0,\dots, k_n\ge 0} |c^P_{k_1,\dots,k_n}| (R-\eps)^{k_1+\dots+k_n}<\infty.
$$
Hence, $f_P$ is analytic in a neighborhood of $(0,\dots,0)$.

Further, if $f^t$ is a sequence of functions converging to $0$ then for some $R>0$ the sums as in
\eqref{eq_x2} converge to $0$ as $n\to\infty$. We again conclude using \eqref{eq_estimate} that
similar sums for $f_P^t$, as in \eqref{eq_x1_P}, also converge to $0$ (perhaps, for smaller $R$)
and, thus $f_P^t$ uniformly converges to $0$ in a neighborhood of $(0,\dots,0)$.
\end{proof}

We also need the value of $f_P$ for one particular choice of $f$ and $P$.
\begin{lemma}
 Take $n>0$ and a function $g(z)$ analytic in a neighborhood of $1$. Then
\begin{multline}
\label{eq_x3} \lim_{z_i\to 1} \Biggl( \frac{g(z_1)}{(z_1-z_2)(z_1-z_3)\cdots (z_1-z_n)}
+\frac{g(z_2)}{(z_2-z_1)(z_2-z_3)\cdots (z_2-z_n)}\\
+\cdots+\frac{g(z_n)}{(z_n-z_1)(z_n-z_3)\cdots (z_n-z_{n-1})}\Biggr)=
\frac{\partial^{n-1}}{\partial z^{n-1}}\left(\frac{g(z)}{(n-1)!}\right)\Biggr|_{z=1}.
\end{multline}
\end{lemma}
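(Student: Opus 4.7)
The left-hand side is exactly the classical $n$-point divided difference $g[z_1,\ldots,z_n]$, and the cleanest route is via a contour-integral reformulation. Assume $g$ is analytic on an open disk $D$ centered at $1$, and fix a small circle $\Gamma\subset D$ around $1$. For $(z_1,\ldots,z_n)$ sufficiently close to $(1,\ldots,1)$ all the $z_i$ lie inside $\Gamma$, and whenever the $z_i$ are pairwise distinct the residue theorem gives
$$
\frac{1}{2\pi\ii}\oint_\Gamma \frac{g(w)}{\prod_{i=1}^n (w-z_i)}\,dw \;=\; \sum_{i=1}^n \frac{g(z_i)}{\prod_{j\neq i}(z_i-z_j)},
$$
since the only singularities of the integrand inside $\Gamma$ are simple poles at the $z_i$, and the residue at $w=z_i$ is precisely $g(z_i)/\prod_{j\neq i}(z_i-z_j)$. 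Thus the sum on the left-hand side of \eqref{eq_x3} coincides with the contour integral whenever the $z_i$ are distinct.

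The next step is to pass to the limit $z_i\to 1$. The integrand, viewed as a function of $(z_1,\ldots,z_n,w)$, is continuous on $\overline{B}\times\Gamma$, where $B$ is a small ball around $(1,\ldots,1)$ chosen so that every $(z_1,\ldots,z_n)\in\overline{B}$ lies at positive distance from $\Gamma$. Hence the integrand converges uniformly in $w\in\Gamma$ to $g(w)/(w-1)^n$ as $(z_1,\ldots,z_n)\to(1,\ldots,1)$, and the limit may be brought inside the integral:
$$
\lim_{z_i\to 1}\;\frac{1}{2\pi\ii}\oint_\Gamma \frac{g(w)}{\prod_{i=1}^n (w-z_i)}\,dw \;=\; \frac{1}{2\pi\ii}\oint_\Gamma \frac{g(w)}{(w-1)^n}\,dw.
$$
By Cauchy's integral formula for derivatives, the right-hand side equals $g^{(n-1)}(1)/(n-1)!$, which is exactly the claimed limit.

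There is essentially no obstacle here: the apparent indeterminacy of the sum on the diagonal, where each individual summand blows up like a pole but the whole sum remains finite, is resolved by the residue reformulation, which exhibits the sum as the specialization of a quantity (the contour integral) that depends jointly continuously on all of the $z_i$. If one wanted to avoid complex analysis, the same result could be obtained by repeatedly applying the recursion $g[z_1,\ldots,z_n]=(g[z_1,\ldots,z_{n-1}]-g[z_2,\ldots,z_n])/(z_1-z_n)$ and induction on $n$, or by invoking the Hermite--Genocchi representation $g[z_1,\ldots,z_n]=\int_{\Delta_{n-1}} g^{(n-1)}(\sum t_i z_i)\,dt$ and taking the limit inside the integral over the simplex $\Delta_{n-1}$, which has volume $1/(n-1)!$; both give the same conclusion.
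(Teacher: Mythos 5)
Your proof is correct, and it takes a genuinely different route from the paper's. You recognize the sum as the divided difference $g[z_1,\dots,z_n]$, rewrite it via the residue theorem as $\frac{1}{2\pi\ii}\oint_\Gamma g(w)\prod_{i}(w-z_i)^{-1}\,dw$, and pass to the limit under the integral sign to land on Cauchy's formula for $g^{(n-1)}(1)/(n-1)!$. The paper instead leans on its preceding lemma (the analyticity of symmetrized quotients $f_P$) to conclude that the sum extends continuously to the diagonal, and then evaluates the limit along the specific ray $z_i=1+\eps(i-1)$ by a direct Taylor expansion, reducing the computation to the finite-difference identities $\sum_{j=0}^{n-1} h(j)\binom{n-1}{j}(-1)^{n-1-j}=0$ for $\deg h\le n-2$ and $\sum_j j^{n-1}\binom{n-1}{j}(-1)^{n-1-j}=(n-1)!$. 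Your argument is self-contained: the contour integral exhibits the sum as a jointly continuous function of $(z_1,\dots,z_n)$ all by itself, so you do not need the prior lemma at all, and you avoid the binomial bookkeeping. The paper's computation, by contrast, recycles machinery it has already built for the asymptotic analysis (where the analyticity of $f_P$ is needed anyway) and stays entirely within elementary real/finite-difference calculus. One small point worth making explicit in your write-up: the limit in the statement is necessarily taken over pairwise distinct tuples $(z_1,\dots,z_n)$, since the individual summands are undefined on the diagonal; your identification with the contour integral holds exactly on that set, which is all that is required.
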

\begin{proof}
 The proof of Lemma \ref{Lemma_Vandermonde_sum} shows that the sum under
 the limit in \eqref{eq_x3} is analytic in $x_i$. Therefore, it is continuous near
 the point $(1,\dots,1)$ and we can approximate this point from any direction. Let
 $z_i=1+\eps(i-1)$ with $\eps\to 0$. Expanding $g(z)$ in Taylor series, we get
\begin{multline}
\label{eq_x4} \eps^{1-n}\sum_{i=1}^n (-1)^{n-i}\frac{g(1)+\eps (i-1) g{'}(1)+\frac{\eps^2
(i-1)^2}{2!} g{''}(1)+\dots}{(i-1)!(n-i)!}
\\=\frac{\eps^{1-n} g(1)}{(n-1)!} \sum_{j=0}^{n-1} j^{0} {n-1 \choose j}(-1)^{n-1-j}+ \frac{\eps^{2-n} g'(1)}{(n-1)!2!} \sum_{j=0}^{n-1} j^1 {n-1 \choose
j}(-1)^{n-1-j}\\ +\dots+ \frac{g^{(n-1)}(1)}{(n-1)!(n-1)!} \sum_{j=0}^{n-1} j^{n-1} {n-1 \choose
j}(-1)^{n-1-j}+ o(\eps)
\end{multline}
Differentiating $k$ times expression $(1-z)^{n-1}$ with $k=0,\dots,n-2$ and substituting $z=1$ one
proves that for any polynomial $h$ of degree at most $n-2$,
\begin{equation}
\label{eq_x40} \sum_{j=0}^{n-1} h(j) {n-1 \choose j}(-1)^{n-1-j} =0.
\end{equation}
Moreover, using \eqref{eq_x40} we also get
$$
  \sum_{j=0}^{n-1} j^{n-1} {n-1 \choose
j}(-1)^{n-1-j}= \sum_{j=0}^{n-1} j(j-1)\cdots(j-n+2) {n-1 \choose j}(-1)^{n-1-j}= (n-1)!.
$$
Therefore, \eqref{eq_x4} transforms into
$$
 \frac{g^{(n-1)}(1)}{(n-1)!} + o(\eps). \qedhere
$$
\end{proof}

\subsection{Proof of Theorem \ref{theorem_moment_convergence}}

 First, write
\begin{equation}
\label{eq_x41} \CG^{G(N)}_{\rho(N)}(u_1,\dots,u_N) = \exp\left(\sum_{i=1}^N N Q(u_i)\right)
T_N(u_1,\dots,u_N).
\end{equation}
Since $\CG^{G(N)}_{\rho(N)}(1^N)=1$, the definition of $Q$ implies that $Q(1)=0$,
$T_N(1,\dots,1)=1$ and
\begin{equation}
\label{eq_x5}
 \lim_{N\to\infty} \frac{1}{N} \ln(T_N(u_1,\dots,u_k,1^{N-k}))=0
\end{equation}
for any $k$ and uniformly over an open neighborhood of $(1^k)$. Since \eqref{eq_x5} involves
uniformly converging analytic functions, we can differentiate it. The outcome is that each partial
derivative (of arbitrary order) of $T_N(u_1,\dots,u_k,1^{N-k})$ divided by
$NT_N(u_1,\dots,u_k,1^{N-k})$ tends to zero uniformly in a certain neighborhood of $(1^k)$.

We want to use Proposition \ref{proposition_moments_and_operators_A} to obtain the asymptotics of
the moments of $m^A[\lambda(N)]$. The formula \eqref{eq_moments_uniform} can be alternatively
written as
$$
 \mathbb E \left( \int_{\mathbb R} x^k m^A[\rho(N)](dx) \right)^m = \frac{1}{N^{m(k+1)}} \lim_{u_1,\dots,u_N\to 1}
  \dfrac{\left(\D^{U(N)}_k\right)^m
 \CG^{U(N)}_{\rho(N)}(u_1,\dots,u_N)}{\CG^{U(N)}_{\rho(N)}(u_1,\dots,u_N)}.
$$

Using  \eqref{eq_def_dif}, \eqref{eq_x41} and the Leibnitz rule we can write
$\left(\D^{U(N)}_k\right)^m
 \CG^{G(N)}_{\rho(N)}(u_1,\dots,u_N)$ as a huge linear combination of the terms of the following kind:
\begin{multline}
\label{eq_x6}
 \left(u_{g_1}\cdots u_{g_\gamma}\right)\cdot\left( \frac{\frac{\partial}{\partial u_{t_1}} \cdots \frac{\partial}{\partial
 u_{t_\tau}} \prod_{i<j}(u_i-u_j)}{\prod_{i<j}(u_i-u_j)}\right)\\ \times \left( \frac{\partial}{\partial u_{a_1}}  \cdots \frac{\partial}{\partial
 u_{a_\alpha}} \exp\left(\sum_{i=1}^N N Q(u_i)\right)\right) \cdot\left( \frac{\partial}{\partial u_{b_1}} \cdots \frac{\partial}{\partial
 u_{b_\beta}} T_N(u_1,\dots,u_N)\right),
\end{multline}
where $\gamma\le mk$ and $\alpha+\beta+\tau\le mk$. We can further expand the second factor in
\eqref{eq_x6} and get the terms
\begin{multline}
\label{eq_x33}
 \left(u_{g_1}\cdots u_{g_\gamma}\right)\cdot\left( \prod_{(a,b)\in P} \frac{1}{u_a-u_b}\right)\\ \times \left( \frac{\partial}{\partial u_{a_1}}  \cdots \frac{\partial}{\partial
 u_{a_\alpha}} \exp\left(\sum_{i=1}^N N Q(u_i)\right)\right) \cdot\left( \frac{\partial}{\partial u_{b_1}} \cdots \frac{\partial}{\partial
 u_{b_\beta}} T_N(u_1,\dots,u_N)\right),
\end{multline}
where $P\subset I^{(N)}$ and $I^{(N)}=\{(a,b)\mid 1\le a<b\le N\}$. The symmetry of the operator
$\left(\D^{G(N)}_k\right)^m$ implies that together with each term of the kind \eqref{eq_x33} all
the terms obtained by permuting the variables $u_i$, $i=1,\dots,N$ are also present. Let us call
\emph{the support} of the term \eqref{eq_x33} the union of the sets $\{{g_1},\dots g_{\gamma}\}$,
$\{b_1,\dots,b_\beta\}$, $\{a_1,\dots,a_\alpha\}$ and projections of $P$ on the first and second
coordinates. Further, two terms of the kind \eqref{eq_x33} are said to be of the same
\emph{combinatorial type} if one of them can be obtained from another by permuting the variables
$u_i$ and, perhaps, sign change. Note that for fixed $m$ and $k$, the set of possible combinatorial
types do not depend on $N$ as long as $N$ is large enough (as compared to $m$ and $k$).

Let us consider the sum of all terms of a fixed combinatorial type \emph{and} support in the
expansion of  $\left(\D^{U(N)}_k\right)^m
 \CG^{U(N)}_{\rho(N)}(u_1,\dots,u_N)$, divide it by $\CG^{U(N)}_{\rho(N)}(u_1,\dots,u_N)$ and send $u_i\to 1$. Observe that we
can set $u_i=1$ for all $i$ outside the support \emph{before} summation and, thus, we can further
use the asymptotic estimate for $T$ and its derivatives. After we take all the derivatives we get
the sum of the form appearing in Lemma \ref{Lemma_Vandermonde_sum}. Note that in each derivation of
the exponent in \eqref{eq_x33} a multiple of $N$ pops out. Now Lemma \ref{Lemma_Vandermonde_sum}
yields that as $u_i\to 1$ the sum asymptotically behaves as $N^{\alpha}\cdot C(N)$, where $C(N)$
depends on the combinatorial type, but not on the support. Further $\lim_{N\to\infty} C(N)=C\ne 0$
if $\beta=0$, otherwise $\lim_{N\to\infty} C(N)/N=0$.

Further, we want to sum over all possible supports. Since each support contributes the same terms,
this boils down to the multiplication by $N\choose S$, where $S$ is the number of elements in the
support. We conclude that the total sum of all terms with given combinatorial type is
asymptotically
\begin{equation}
\label{eq_x7}
 N^{S+\alpha} C'(N),\quad
\end{equation}
Further $\lim_{N\to\infty} C'(N)=C'\ne 0$ if $\beta=0$, otherwise $\lim_{N\to\infty} C'(N)/N=0$.

We need to understand for which combinatorial types the power in \eqref{eq_x7} is maximal. Clearly,
we should maximize $\alpha$ and number of elements in the support. First observe that all the
elements of $\{{g_1},\dots g_{\gamma}\}$ should  be either in $\{b_1,\dots,b_\beta\}$, or
$\{a_1,\dots,a_\alpha\}$, or one of the projections of $P$ --- therefore, these elements are
irrelevant in our count.

Further note that $\alpha+\beta+\tau\le mk$, and also the number of the elements in the union of
the sets $\{b_1,\dots,b_\beta\}$, $\{a_1,\dots,a_\alpha\}$ is at most $m$. On the other hand, the
number of elements in the support might be bigger because of the projections of $P$, namely, each
of the $\tau$ elements $t_i$ could have produced a pair in $P$ which would increase the support by
$1$. Now we conclude that if $\beta>0$, then $S+\alpha\le m(k+1)-1$ and $N^{S+\alpha}
C'(N)=o(N^{m(k+1)})$. Thus, the leading terms are those when $\alpha+\tau=mk$ and
$S+\alpha=m(k+1)$. In particular, this means that the function $T_N$ is irrelevant for the leading
term and we can even replace it by $1$: Indeed, since $T_N$ is symmetric, it is irrelevant for it
whether we first symmetrize, then set $u_i=1$ or in the opposite order.

Next, we apply the above analysis for $m=1$ and $m=2$ cases. First, set $m=1$. Then the above
arguments show that the leading term of the asymptotics is the same as the leading term of
$$
 \frac{1}{\prod_{i<j}(u_i-u_j)}\sum_{i=1}^N u_i^k \frac{\partial^k}{\partial
 u_i^k}\left[\exp(N(Q(u_1)+\dots+Q(u_N)))\right] \prod_{i<j}(u_i-u_j)\Bigr|_{u_i=1,\, i=1,\dots,N}.
$$
And the latter has the same leading term as
$$
 \sum_{\ell=0}^k \sum_{i=1}^N N^{k-\ell} {k\choose \ell} u_i^k \frac{\frac{\partial^\ell}{\partial
 u_i^\ell} {\prod_{i<j}(u_i-u_j)}} {\prod_{i<j}(u_i-u_j)} Q'(u_i)^{k-\ell}\Biggr|_{u_i=1,\, i=1,\dots,N}.
$$
Replacing the summation over all supports by one prescribed, we transform the last expression into
\begin{equation}
\label{eq_x8}
 \sum_{\ell=0}^k  \frac{N^{k-\ell} N(N-1)\cdots (N-\ell)}{\ell+1} {k\choose \ell }
\sum_{i=1}^{\ell+1}
  \frac{u_i^kQ'(u_i)^{k-\ell}}{\prod_{j\ne i} (u_i-u_j)}\Biggr|_{u_i=1,\, i=1,\dots,N}.
\end{equation}
Applying Lemma \ref{Lemma_Vandermonde_sum} we conclude that the asymptotics of \eqref{eq_x8} is
\begin{equation}
\label{eq_x42}
 N^{k+1}\sum_{\ell=0}^k \frac{k!}{(k-\ell)!\ell!(\ell+1)!}
 \frac{\partial^{\ell}}{\partial u^{\ell}} (u^k Q'(u)^{k-\ell})\Bigr|_{u=1}.
\end{equation}
Dividing by $N^{k+1}$ we conclude that the limit expectation of the sequence of random measures
$m[\rho(N)]$ has the same moments as prescribed by Theorem \ref{theorem_moment_convergence}.

It remains to prove that the moments of random measures $m[\rho(N)]$, indeed, concentrate and
become deterministic as $N\to\infty$. This would follow from
\begin{equation}
\label{eq_x9}
 \lim_{N\to\infty} \mathbb E\left( \left(\int x^k m[\rho(N)](dx)\right)^2\right)= \lim_{N\to\infty} \left(\mathbb E \left(\int x^k
 m[\rho(N)](dx)\right)\right)^2.
\end{equation}
We already know that the right--hand side of \eqref{eq_x9} is the limit of \
\begin{equation}
\label{eq_x10} \left( \sum_{\ell=0}^k \sum_{i=1}^N N^{-\ell-1} {k\choose \ell}
\sum_{M\subset\{1,\dots,N\}: |M|=\ell, i\notin M} \frac{1}{\prod_{j\in M}(u_i-u_j)} u_i^k
Q'(u_i)^{k-\ell}\right)^2\Biggr|_{u_i=1,\, i=1,\dots,N}.
\end{equation}
 On the other hand, using out asymptotic analysis with $m=2$,
 we get a very similar expression for the leading term of the left--hand side of
\eqref{eq_x9}, i.e.\
\begin{multline}
\label{eq_x11}
 \sum_{\ell=0}^k \sum_{\ell'=0}^k \sum_{i=1}^N \sum_{i'=1}^N N^{-\ell'-\ell-2} {k\choose
\ell} {k\choose \ell'} \sum_{M,M'\subset\{1,\dots,N\}: |M|=\ell, i\notin M, |M'|=\ell', i'\notin
M', M\cap M'=\emptyset}\\ \frac{1}{\prod_{j\in M}(u_i-u_j)} \frac{1}{\prod_{j'\in
M'}(u_{i'}-u_{j'})} u_i^k Q'(u_i)^{k-\ell} u_{i'}^k Q'(u_{i'})^{k-\ell} \Biggr|_{u_i=1,\,
i=1,\dots,N}.
\end{multline}
In fact, the only difference between \eqref{eq_x10} and \eqref{eq_x11} is the condition
$|M|\cap|M'|=\emptyset$ in the second one. However, we know that to understand the leading
asymptotics we need to take only the terms whose support is maximal both in \eqref{eq_x10} and
\eqref{eq_x11} and in such terms the condition $|M|\cap|M'|=\emptyset$ will be satisfied
automatically in both sums.

This finishes the proof of Theorem \ref{theorem_moment_convergence}.

\subsection{Proof of Theorem \ref{theorem_moment_convergence_BCD}} The general plan of the proof here is
the same as for Theorem \ref{theorem_moment_convergence}, i.e.\ we want to apply Proposition
\ref{proposition_moments_and_operators_BCD} and then expand the result into a sum using Leibnitz
rule. The key difference is that we would like to make a change of variables
$z_i=\frac{u_i+u_i^{-1}}{2}$ in operators $\D^{G(N)}_k$ for $G(N)\ne U(N)$. In order to do this, we
note the following identity for $G(N)=SO(2N)$

\begin{multline}
\label{eq_x15_5} \left(u \frac{\partial}{\partial u}\right)^2 f\left(\frac{u+u^{-1}}{2}\right)= u
\frac{\partial}{\partial u} \left( \frac{u-u^{-1}}{2}\right)
f'\left(\frac{u+u^{-1}}{2}\right)\\=\frac{u+u^{-1}}{2}
f'\left(\frac{u+u^{-1}}{2}\right)+\left(\left(\frac{u+u^{-1}}{2}\right)^2-1\right)
f''\left(\frac{u+u^{-1}}{2}\right)\\=\left(z\frac{\partial}{\partial z}+
(z^2-1)\frac{\partial^2}{\partial z^2}\right) f(z) \bigr|_{z=\frac{u+u^{-1}}2}.
\end{multline}
And for $\alpha\in\mathbb R$,
\begin{multline}
\label{eq_x16} \frac{1}{u^{\alpha}-u^{-\alpha}}\left(u \frac{\partial}{\partial u}\right)^2
\left((u^{\alpha}-u^{-\alpha}) f\left(\frac{u+u^{-1}}{2}\right)\right)\\=
\frac{u}{u^\alpha-u^{-\alpha}} \frac{\partial}{\partial u}
\left(\alpha(u^\alpha+u^{-\alpha})f\left(\frac{u+u^{-1}}{2}\right)+
\left(\frac{u-u^{-1}}{2}(u^\alpha-u^{-\alpha}) \right) f'\left(\frac{u+u^{-1}}{2}\right)\right)
 \\=\alpha^2 f\left(\frac{u+u^{-1}}{2}\right)+
 \left(\alpha\frac{(u-u^{-1})(u^\alpha+u^{-\alpha})}{(u^\alpha-u^{-\alpha})}+\frac{u+u^{-1}}{2}\right)
f'\left(\frac{u+u^{-1}}{2}\right)\\+\left(\left(\frac{u+u^{-1}}{2}\right)^2-1\right)
f''\left(\frac{u+u^{-1}}{2}\right).
\end{multline}
Note that when $\alpha=1$ or $\alpha=1/2$, which are the cases we need for $G(N)=Sp(2N)$ and
$G(N)=SO(2N+1)$, respectively, the term
$$
 \alpha\frac{(u-u^{-1})(u^\alpha+u^{-\alpha})}{(u^\alpha-u^{-\alpha})}
$$
becomes a function of $u+u^{-1}$. Therefore, \eqref{eq_x16} transforms into
$$
\left(\alpha^2+ c_\alpha(z)\frac{\partial}{\partial z}+ (z^2-1)\frac{\partial^2}{\partial
z^2}\right) f(z) \bigr|_{z=\frac{u+u^{-1}}2}.
$$

Now we can use exactly the same argument as in the proof of Theorem
\ref{theorem_moment_convergence}, but in variables $z=\frac{u+u^{-1}}{2}$. Note that in the proof
of Theorem \ref{theorem_moment_convergence} we saw that each derivation brings another factor of
$N$ to the asymptotics, thus, for the leading asymptotics only ``maximal'' number of derivatives is
relevant. In other words, in formulas \eqref{eq_x15_5}, \eqref{eq_x16} only the terms with second
derivatives matter, i.e.\ Proposition \ref{proposition_moments_and_operators_BCD} yields that for
$G\ne U(N)$
\begin{multline} \label{eq_x39}
 \mathbb E \left( \int_{\mathbb R} x^{2k} \widehat m^G[\rho(N)](dx) \right)^m\\ \sim
\frac{1}{2^{2mk} N^{m(2k+1)}} \left(\sum_{i=1}^N \left((z_i^2-1)\frac{\partial^2}{\partial
z_i^2}\right)^k \right)^m
 \left(\CG^{G(N)}_{\rho(N)}(u_1,\dots,u_N)\Bigl|_{z_i=\frac{u+u^{-1}}{2}}\right)
 \Biggr|_{z_i=1,i=1,\dots,N}.
\end{multline}
Here and below by $A\sim
 B$ we mean $\lim_{N\to\infty} A/B=1$.

Next, note that characters are polynomials in $u_i+u_i^{-1}$ for systems $B,C$ and for system $D$
when at least one of the arguments is $1$ (and, as we showed in the proof of Theorem
\ref{theorem_moment_convergence}, we need only the values at such points). Therefore, in
\eqref{eq_x39} we can replace
$$
 \left(\CG^{G(N)}_{\rho(N)}(u_1,\dots,u_N)\Bigl|_{z_i=\frac{u+u^{-1}}{2}}\right)
$$
by an analytic function $\widehat \CG^{G(N)}_{\rho(N)}(z_1,\dots,z_N)$ such that
$$
 \lim_{N\to\infty} \frac{1}{N} \ln( \widehat \CG^{G(N)}_{\rho(N)}(z_1,\dots,z_k, 1^{N-k}) ) = \widehat Q(z_1)+\dots+\widehat Q(z_k),
$$
 and
$$
 \widehat Q\left(\frac{u+u^{-1}}2\right)= Q(u).
$$

Now the argument of Theorem \ref{theorem_moment_convergence} leading to the concentration for the
moments and formula for the limit \eqref{eq_x42} can be repeated. This yields the concentration for
the moments in series $B,C,D$ and the following formula for the moments of the limit measure:
\begin{equation}
\label{eq_limit_moments_BCD_2}
 \int_{\mathbb R} x^{2k} \mes^G(dx)= 2^{-2k} \sum_{\ell=0}^{2k}
 \frac{(2k)!}{\ell! (\ell+1)! (2k-\ell)!} \frac{\partial^\ell}{\partial z^{\ell}}\left((z^2-1)^k
 \widehat Q'(z)^{2k-l}\right)\Biggr|_{z=1}.
\end{equation}

\subsection{Proof of Theorem \ref{theorem_moment_link_to_PP} and Theorem \ref{theorem_intertwining}.}

Theorem \ref{theorem_moment_link_to_PP} is an immediate corollary of the following result, which,
in turn, follows from the results of Section 4 of \cite{PP} and \cite{Popov}, \cite{Popov2}.

\begin{proposition}[\cite{PP}, \cite{Popov}, \cite{Popov2}]
 For each $G=A,B,C,D$ and each $k=1,2,\dots$ there exist $\ell(k,G)$ multivariate polynomials $P_1$,\dots, $P_{\ell(k,G)}$
 and $\ell(k,G)$ functions $f_1(N),\cdots, f_{\ell(k,G)}(N)$ such that for any $\lambda\in\hG(N)$
 with the notations
$$
 s_k[\lambda]=\int_{\mathbb R} x^k m^G[\lambda](dx),\quad c_k[\lambda]=\int_{\mathbb R} x^k m^G_{PP}[\lambda](dx), \quad k=0,1,2,\dots
$$
we have
\begin{equation}
\label{eq_x21}
 c_k[\lambda]=\sum_{i=1}^{\ell(k,G)} f_i(N) P_i(s_1[\lambda],s_2[\lambda],\dots,s_k[\lambda]).
\end{equation}
The functions $f_i(N)$ have limits as $N\to\infty$ such that \eqref{eq_x21} asymptotically turns
into
\begin{equation}
\label{eq_x22} 1 - \sum_{k=0}^{\infty} c_k z^{k+1} = \exp \left(- \sum_{k=0}^{\infty} s_k z^{k+1}
\right).
\end{equation}
\end{proposition}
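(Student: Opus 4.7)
The plan is to first treat series $A$ in detail, where the key structural identity takes a particularly clean form, and then to indicate how the arguments extend to series $B$, $C$, $D$.

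For series $A$, set $\ell_j = \lambda_j + N - j$ and $w_i = \prod_{j \ne i}\tfrac{\ell_i - \ell_j - 1}{\ell_i - \ell_j}$, so that the Perelomov--Popov formula gives $C_p[\lambda] = \sum_i w_i \ell_i^p$ and $c_p[\lambda] = C_p[\lambda]/N^{p+1}$. The starting point is the partial-fraction identity
\begin{equation*}
 \sum_{i=1}^N \frac{w_i}{z - \ell_i} \;=\; 1 \;-\; \prod_{j=1}^N \frac{z - \ell_j - 1}{z - \ell_j},
\end{equation*}
which one verifies by matching residues at each pole $z = \ell_i$ (both sides vanish at $z = \infty$). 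Substituting $z = N/w$ and expanding geometrically converts the left-hand side into $\sum_{p \ge 0} c_p[\lambda]\, w^{p+1}$, yielding the finite-$N$ master identity
\begin{equation*}
 1 - \sum_{p \ge 0} c_p[\lambda]\, w^{p+1} \;=\; \prod_{j=1}^N \frac{1 - (\ell_j+1)w/N}{1 - \ell_j w/N}.
\end{equation*}

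Taking logarithms and using the expansion $\log(1-a-b) - \log(1-a) = -\sum_{m \ge 1}\tfrac{b^m}{m(1-a)^m}$ with $a = \ell_j w/N$ and $b = w/N$ gives
\begin{equation*}
 \log\Bigl(1 - \sum_{p \ge 0} c_p[\lambda]\, w^{p+1}\Bigr) \;=\; -\sum_{m \ge 1} \frac{w^m}{m\, N^m}\, \sum_{j=1}^N \frac{1}{(1 - \ell_j w/N)^m}.
\end{equation*}
Expanding each $(1 - \ell_j w/N)^{-m}$ in powers of $\ell_j w/N$ converts the inner sum into a polynomial in the power sums $p_k[\lambda] = \sum_j \ell_j^k = N^{k+1} s_k[\lambda]$. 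Re-collecting by the power of $w$ and then exponentiating produces the representation asserted in the proposition: each $c_k[\lambda]$ becomes a finite linear combination, with coefficients rational in $N$, of monomials in $s_1[\lambda], \dots, s_k[\lambda]$, which is exactly the shape displayed in the statement.

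The asymptotic identity is then immediate by isolating the $m=1$ term of the log-expansion,
\begin{equation*}
 -\frac{w}{N} \sum_{j=1}^N \frac{1}{1 - \ell_j w/N} \;=\; -\sum_{k \ge 0} \frac{p_k[\lambda]}{N^{k+1}}\, w^{k+1} \;=\; -\sum_{k \ge 0} s_k[\lambda]\, w^{k+1},
\end{equation*}
and observing that for $m \ge 2$ the inner sum $\sum_j (1 - \ell_j w/N)^{-m}$ is $O(N)$ under the compact-support hypothesis of Definition \ref{definition_regularity}, so the whole $m$th contribution is $O(N^{1-m}) = o(1)$ coefficientwise in $w$. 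Exponentiating and passing to the limit produces the identity $1 - \sum c_k z^{k+1} = \exp(-\sum s_k z^{k+1})$. For series $B$, $C$, $D$ one proceeds analogously, starting from the corresponding Perelomov--Popov formulas of \cite{PP}, \cite{Popov}, \cite{Popov2}: the atoms of $m^G_{PP}[\lambda]$ come in pairs $\ell_i \leftrightarrow -\ell_i$ (plus, for $B$, one unpaired middle atom), and the partial-fraction identity becomes a product over the $\hat N$ atoms. The main technical obstacle I anticipate is the bookkeeping of the constant shifts distinguishing the four root systems, namely the values $\hat N \in \{N, 2N, 2N+1\}$ and the middle-atom correction appearing in Definition \ref{Definition_PP}; once these are folded consistently into the $\ell_j$'s so that the master product identity takes the same form, the same Taylor-expansion and order-of-magnitude argument produces the polynomial representation and shows that only the first-order term survives, establishing the exponential identity in full generality.
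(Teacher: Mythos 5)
Your argument is correct, but it takes a genuinely different route from the paper. The paper does not derive \eqref{eq_x21} at all: it simply quotes the explicit formulas of \cite[Theorem 3]{PP} and \cite[Eq.\ 2.11--2.12]{Popov2} for the polynomials $P_i$ and the functions $f_i(N)$, and observes that the limit transition to \eqref{eq_x22} is immediate from those formulas. You instead reconstruct the underlying generating-function identity from scratch: the partial-fraction identity $\sum_i w_i/(z-\ell_i)=1-\prod_j\frac{z-\ell_j-1}{z-\ell_j}$ (verified by matching residues and behaviour at infinity, using that the $\ell_i$ are distinct), the substitution $z=N/w$, and the logarithmic expansion isolating the $m=1$ term. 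This is a clean and essentially self-contained proof for series $A$; what it buys is transparency --- one sees exactly why only the first-order term survives (each $m\ge 2$ contributes $O(N^{1-m})$ per coefficient, using that the $s_r[\lambda(N)]$ stay bounded, which is guaranteed by the uniform compact support coming from Definition \ref{definition_regularity}), whereas the paper's citation buys brevity and immediately covers all four series. The one place where your write-up is thinner than what the proposition requires is the case $G=B,C,D$: there the finite-$N$ product identity over the $\hat N$ atoms, with the half-integer shifts and the unpaired middle atom for $B$, is precisely the content of \cite[Theorem 3]{PP} and is not something you have verified --- your residue argument for $A$ does not automatically transfer because the weights $\dim(\lambda^{(i\pm)})/\dim(\lambda)$ for the other root systems involve additional factors beyond the type-$A$ product $\prod_{j\ne i}\frac{\ell_i-\ell_j-1}{\ell_i-\ell_j}$. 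So either carry out the analogous residue computation for each series or, as the paper does, fall back on the cited formulas for those cases.
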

In fact, \cite[Theorem 3]{PP} and \cite[Eq.\ 2.11--2.12]{Popov2} contain explicit formulas for the
polynomials $P_i$ and functions $f_i$ from which the limit transition to \eqref{eq_x22} is
immediate.

\bigskip

Let us now turn to  Theorem \ref{theorem_intertwining}. Observe, that the existence of $Q(\rho)$
can be deduced from Theorem \ref{theorem_moment_link_to_PP}. Indeed, any probability measure
$\rho$ with compact support and bounded by $1$ density can be approximated as $N\to\infty$ limit
of $m^A[\lambda(N)]$ with a suitable $\lambda(N)\in\widehat U(N)$, then the limit of
$m^A_{PP}[\lambda(N)]$ gives $Q(\rho)$. The intertwining property of Theorem
\ref{theorem_intertwining} is a simple corollary of Theorem \ref{Theorem_linearization}, which we
will prove below.

\section{Concentration phenomena}

In this section we prove the main results announced in Section \ref{Section_results}.

\subsection{Consistency check}
First, we would like to check that Theorem \ref{Theorem_character_asymptotics} and Theorem
\ref{theorem_moment_convergence} agree with each other.
\begin{lemma}
\label{Lemma_consistency} Let $\mes$ be a probability measure on $\mathbb R$ with compact support,
then its moments $M_k(\mes)$ can be computed through
\begin{equation}
\label{eq_x12}
 M_k(\mes)= \sum_{\ell=0}^k \frac{k!}{\ell! (\ell+1)! (k-\ell)!} \frac{\partial^\ell}{\partial u^k}\left(u^k
 \left(\frac{\partial\H_\mes(u)}{\partial u}\right)^{k-l}\right)\Biggr|_{u=1}.
\end{equation}
\end{lemma}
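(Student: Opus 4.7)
The plan is to obtain Lemma \ref{Lemma_consistency} as a self-consistency check between Theorem \ref{Theorem_character_asymptotics} and Theorem \ref{theorem_moment_convergence}, and then extend from the class of measures to which these theorems apply directly to the class in the statement by a polynomial identity argument. Concretely, specialize Theorem \ref{theorem_moment_convergence} to the deterministic case $\rho(N) = \delta_{\lambda(N)}$, where $\lambda(N) \in \widehat{U}(N)$ is a regular sequence with $m^A[\lambda(N)] \to \mes$. In that case $\CG^{U(N)}_{\rho(N)}(u_1,\dots,u_N) = \chi^{U(N)}_{\lambda(N)}(u_1,\dots,u_N)/\chi^{U(N)}_{\lambda(N)}(1^N)$, and Theorem \ref{Theorem_character_asymptotics} tells us that the hypothesis of Theorem \ref{theorem_moment_convergence} holds with $Q = \H_\mes$. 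The random measures $m^A[\rho(N)]$ are then just the deterministic measures $m^A[\lambda(N)]$ whose $k$-th moments converge to $M_k(\mes)$. On the other hand, Theorem \ref{theorem_moment_convergence} gives the same limit via \eqref{eq_limit_moments_A} with $Q = \H_\mes$, which is precisely the right-hand side of \eqref{eq_x12}.

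The first step that requires checking is the existence of a regular approximating sequence: given a probability measure $\mes$ with compact support and density bounded by $1$ with respect to Lebesgue measure, I want $\lambda(N)$ such that $m^A[\lambda(N)] \to \mes$. This is standard: if $F^{-1}$ is the quantile function of $\mes$ and one sets $\mu_i(N) = F^{-1}((N-i+1/2)/N)$, then the bound on the density ensures that the spacing $\mu_i - \mu_{i+1} \geq 1/N$, so that the integers $\lambda_i(N) = \lfloor N \mu_i(N) \rfloor - N + i$ form a signature. The corresponding $m^A[\lambda(N)]$ converges weakly (hence in moments, by compactness of supports) to $\mes$, and regularity in the sense of Definition \ref{definition_regularity} follows directly from the construction and the fact that $F^{-1}$ is piecewise continuous. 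This establishes \eqref{eq_x12} for all $\mes$ with compact support and density bounded by $1$.

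To pass to an arbitrary compactly supported probability measure $\mes$, I invoke the observation that both sides of \eqref{eq_x12} are polynomials in the moments $M_1(\mes),\dots,M_k(\mes)$. Indeed, the free cumulants are polynomials in the moments by Lagrange inversion of $S_\mes$, so $R_\mes(t)$ has coefficients polynomial in the moments, and adding the fixed analytic function $\ln(\ln u/(u-1))$ to $\int_0^{\ln u} R_\mes(t) dt$ yields a series $\H_\mes(u)$ in $(u-1)$ whose Taylor coefficients at $u=1$ are polynomials in $M_1(\mes),M_2(\mes),\dots$. Substituting into \eqref{eq_x12}, the right-hand side becomes a polynomial in $M_1(\mes),\dots,M_k(\mes)$, as does the left-hand side (trivially). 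The vectors $(M_1(\mes),\dots,M_k(\mes))$ arising from compactly supported probability measures with density bounded by $1$ contain a non-empty open subset of $\mathbb R^k$ (e.g., small convolutions with uniform distributions perturb all moments independently). A polynomial identity on such a set is an identity everywhere, so \eqref{eq_x12} holds in full generality.

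The main obstacle is really bookkeeping: the approximation argument needs the construction of $\lambda(N)$ to satisfy both parts of Definition \ref{definition_regularity}, which is routine but requires care, and the polynomial density claim requires one to verify that the moment map from the class of admissible measures has non-empty image in $\mathbb R^k$ (rather than, say, lying on a proper subvariety). Both are elementary but should be stated explicitly. Once these are in place, the lemma follows purely by matching the two asymptotic formulas for the same quantity, with no further computation needed.
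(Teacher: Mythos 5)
Your proof is correct, but it takes a genuinely different route from the paper's. The paper proves Lemma \ref{Lemma_consistency} by a direct computation: it rewrites the right-hand side of \eqref{eq_x12} as a contour integral $\frac{1}{2(k+1)\pi\ii}\oint_1 \frac{dz}{z}\bigl(z\H_\mes'(z)+\frac{z}{z-1}\bigr)^{k+1}$ and identifies this with $M_k(\mes)=[z^{k+1}]S_\mes(z)$ via the Lagrange--B\"urmann formula and the substitution $z=e^w$; this works at once for every compactly supported probability measure. You instead take the ``consistency check'' route --- matching Theorem \ref{Theorem_character_asymptotics} against Theorem \ref{theorem_moment_convergence} along a regular approximating sequence $\lambda(N)$ --- which the paper's own remark after the lemma explicitly flags as valid but as reaching only measures with density bounded by $1$. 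Your polynomial-identity extension is the new ingredient that closes exactly this gap: both sides of \eqref{eq_x12} are universal polynomials in $M_1(\mes),\dots,M_k(\mes)$ (the coefficient of $(u-1)^j$ in $\H_\mes'$ involves free cumulants up to $\kappa_{j+1}$, hence moments up to $M_{j+1}$, and since $\H_\mes'$ enters the $\ell$-th term only when $\ell\le k-1$, no moment beyond $M_k$ appears), and the moment vectors of admissible measures fill an open subset of $\mathbb R^k$ (linear perturbations of the density $\tfrac12\mathbf 1_{[0,2]}$ already do this), so the identity propagates to all compactly supported measures. This is a legitimate argument and not circular, since Theorems \ref{Theorem_character_asymptotics} and \ref{theorem_moment_convergence} are proved independently of the lemma. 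Two small points of care: your quantile construction produces a profile $f$ that is monotone but may have countably many discontinuities, so it need not be regular in the strict sense of Definition \ref{definition_regularity} for an arbitrary density-bounded measure --- harmless here, since the Zariski-density step only needs the identity on an open set of moment vectors supplied by smooth densities, but the claim ``for all $\mes$ with density bounded by $1$'' in your second step is a slight overreach as stated. The trade-off is that your argument leans on the heavy representation-theoretic machinery of Sections \ref{Section_characters}--\ref{Section_appendix} to prove what is ultimately an identity of formal power series, whereas the paper's computation is self-contained and a few lines long; on the other hand, your route explains \emph{why} the formula must hold and generalizes automatically to any situation where the two asymptotic theorems coexist.
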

\noindent {\bf Remark.} For measures which can be obtained as weak limits of $m[\lambda(N)]$ with
regular sequence $\lambda(N)$ this is an immediate combination of Theorem
\ref{Theorem_character_asymptotics} and Theorem \ref{theorem_moment_convergence}. However, only
measures with density with respect to the Lebesgue measure at most $1$ can be obtained in such a
way.
\begin{proof}[Proof of Lemma \ref{Lemma_consistency}]
Using integral representation for the derivative (i.e.\ Cauchy formula) \eqref{eq_x12} can be
transformed into
\begin{multline}
\label{eq_moment_integral_formula}
 M_k(\mes)=\frac{1}{2\pi\ii} \oint_{1}  \sum_{l=0}^k  \frac{1}{l+1} {k\choose l} \frac{z^k
 H_\mes'(z)^{k-l}}{(z-1)^{l+1}} dz
 \\= \frac{1}{2\pi\ii} \oint_{1} \frac{z^k H_\mes'(z)^{k+1}}{k+1} \sum_{l=-1}^k  {{k+1} \choose {l+1}}
 \frac{1}{
 H_\mes'(z)^{l+1}(z-1)^{l+1}} dz\\= \frac{1}{2\pi\ii} \oint_{1}  \frac{z^k H_\mes'(z)^{k+1}}{k+1}
 \left(1+\frac{1}{H_\mes'(z)(z-1)}\right)^{k+1} dz\\= \frac{1}{2(k+1)\pi\ii} \oint_{1} \frac{dz}{z}
 \left(zH_\mes'(z)+\frac{z}{z-1}\right)^{k+1},
\end{multline}
where the integration goes over a small positively oriented contour around $1$. On the other hand,
the definition of $\H_\mes(z)$ yields
$$
 \frac{x}{xe^x H_\mes'(e^x) + \frac{xe^x}{e^x-1}} = \frac{1}{R_\mes(x)+\frac{1}{x}}= (S_\mes(z))^{(-1)},
$$
where $(\cdot)^{(-1)}$ is the functional inversion and $S_\mes(z)$ is the moment generating
function:
$$
 S_\mes(z)=z+M_1(\mes) z^2 + M_2(\mes) z^3+\dots.
$$
Now using Lagrange inversion theorem (in the form of Lagrange--B\"{u}rmann formula, see e.g.\
\cite[Section 5.4]{Stanley}), we get
\begin{multline}
\label{eq_moment_integral_formula_2}
 M_k(\mes) = [z^{k+1}](S(z))=\frac{1}{k+1} [w^k] \left(we^w H_\mes'(e^w) + \frac{we^w}{e^w-1}\right)^{k+1}
 \\ = \frac{1}{2(k+1)\pi\ii} \oint_0 \left(e^w H_\mes'(e^w)+\frac{e^w}{e^{w}-1}\right)^{k+1} dw,
\end{multline}
where the integration goes over a small contour around $0$. It remains to observe that the change
of variables $w=\ln(z)$ transforms \eqref{eq_moment_integral_formula_2} into
\eqref{eq_moment_integral_formula}.
\end{proof}

For other root systems we need the following statement:

\begin{lemma}
\label{Lemma_consistency_BCD} Let $\mes$ be a probability measure on $\mathbb R$ symmetric with
respect to the origin and with compact support. Then its even moments $M_{2k}(\mes)$ can be
computed through
\begin{equation}
\label{eq_x32}
 M_{2k}(\mes)= 2^{-2k} \sum_{\ell=0}^{2k}
 \frac{(2k)!}{\ell! (\ell+1)! (2k-\ell)!} \frac{\partial^\ell}{\partial z^{\ell}}\left((z^2-1)^k
 \widehat H'(z)^{2k-l}\right)\Biggr|_{z=1}.
\end{equation}
where
\begin{equation}
\label{eq_x31}
 \widehat H\left(\frac{x+x^{-1}}2\right)=2H_\mes(x)+\ln(x).
\end{equation}
\end{lemma}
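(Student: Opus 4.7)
The plan is to mirror the contour-integral argument from the proof of Lemma~\ref{Lemma_consistency}, but now in the variable $z$, and to reduce the claim to an application of that lemma (with integer parameter $2k$) via the substitution $z = (x+x^{-1})/2$.

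First I would rewrite the right-hand side of \eqref{eq_x32} as a single contour integral around $z=1$. Using the Cauchy formula
\[
\frac{\partial^\ell}{\partial z^\ell}\Bigl((z^2-1)^k \widehat H'(z)^{2k-\ell}\Bigr)\Big|_{z=1} = \frac{\ell!}{2\pi\ii}\oint_{z=1} \frac{(z^2-1)^k \widehat H'(z)^{2k-\ell}}{(z-1)^{\ell+1}}\,dz,
\]
the identity $\frac{(2k)!}{(\ell+1)!(2k-\ell)!} = \frac{1}{2k+1}\binom{2k+1}{\ell+1}$, and the binomial collapse
\[
\sum_{\ell=0}^{2k}\binom{2k+1}{\ell+1}\frac{1}{[(z-1)\widehat H'(z)]^\ell} = (z-1)\widehat H'(z)\Bigl[\bigl(1 + [(z-1)\widehat H'(z)]^{-1}\bigr)^{2k+1}-1\Bigr],
\]
the right-hand side of \eqref{eq_x32} simplifies to
\[
\frac{2^{-2k}}{2(2k+1)\pi\ii}\oint_{z=1} \frac{(z+1)^k\bigl[(z-1)\widehat H'(z)+1\bigr]^{2k+1}}{(z-1)^{k+1}}\,dz,
\]
after dropping the subtracted term proportional to $(z^2-1)^k \widehat H'(z)^{2k+1}$, whose integrand is analytic at $z=1$ and therefore contributes zero.

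Second, I would perform the substitution $z = (x+x^{-1})/2$. The key algebraic identities are $z \pm 1 = (x\pm 1)^2/(2x)$, $dz = (x^2-1)/(2x^2)\,dx$, and --- upon differentiating the defining relation \eqref{eq_x31} ---
\[
(z-1)\widehat H'(z) + 1 = \frac{2(x-1)}{x+1}\left(x H_\mes'(x) + \frac{x}{x-1}\right).
\]
Since the map $x\mapsto z$ is a two-sheeted branched cover near $x=1$ (a small loop in $z$ around $1$ lifts to a loop in $x$ that traverses a small circle around $1$ twice), one picks up an additional factor of $\tfrac12$ when rewriting the contour integral in $x$. All the resulting rational $x$-factors and powers of $2$ cancel, leaving exactly
\[
\frac{1}{2(2k+1)\pi\ii}\oint_{x=1}\frac{dx}{x}\left(xH_\mes'(x)+\frac{x}{x-1}\right)^{2k+1}.
\]
This is precisely the contour representation \eqref{eq_moment_integral_formula} for $M_{2k}(\mes)$ established in the proof of Lemma~\ref{Lemma_consistency} (with the integer parameter there set to $2k$), so the right-hand side of \eqref{eq_x32} equals $M_{2k}(\mes)$.

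The main point requiring care is the well-definedness of $\widehat H$ as an analytic function of $z$ near $z=1$: the definition \eqref{eq_x31} demands that $2H_\mes(x) + \ln x$ be invariant under $x\mapsto x^{-1}$, which after differentiation is equivalent to $R_\mes$ being an odd function. This holds precisely because $\mes$ is symmetric about the origin, so all its odd moments vanish, making $S_\mes$, $S_\mes^{(-1)}$, and hence $R_\mes$ odd. Once this is verified, the rest of the argument is algebraic bookkeeping combined with the branched-cover analysis; no new analytic input beyond Lemma~\ref{Lemma_consistency} is required, and the odd moments of $\mes$ vanish automatically by symmetry.
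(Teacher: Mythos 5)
Your proof is correct and follows essentially the same route as the paper's: reduce the sum to a contour integral around $z=1$ exactly as in Lemma~\ref{Lemma_consistency}, substitute $z=(x+x^{-1})/2$ with the factor $\tfrac12$ from the doubled contour, and recognize the resulting $x$-integral as the representation \eqref{eq_moment_integral_formula} of $M_{2k}(\mes)$ (the paper re-derives that last identity via Lagrange inversion rather than citing it, but it is the same computation). Your added check that the symmetry of $\mes$ makes $R_\mes$ odd, so that \eqref{eq_x31} actually defines an analytic $\widehat H$, is a worthwhile point the paper leaves implicit.
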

\begin{proof}
By the same argument as in Lemma \ref{Lemma_consistency} we transform \eqref{eq_x32} into
\begin{equation}
\label{eq_x17}
 M_{2k}(\mes)= \frac{2^{-2k}}{2(2k+1)\pi\ii} \oint_{1}  (z^2-1)^k
 \left(\widehat H'(z)+\frac{1}{z-1}\right)^{2k+1}dz.
\end{equation}
Thus, changing the variables $z=(x+x^{-1})/2$ in \eqref{eq_x17}, we get (additional factor of $2$
appears because the contour is doubled)
\begin{multline}
\label{eq_x18}
 M_{2k}(\mes)\\= \frac{2^{-2k}}{4(2k+1)\pi\ii} \oint_{1}  \left(\frac{x-x^{-1}}{2}\right)^{2k+1}
 \left(\frac{4x}{x-x^{-1}}(H_{\mes})'(x)+\frac{2}{x-x^{-1}}+\frac{2}{x+x^{-1}-2}\right)^{2k+1}\frac{dx}{x}
\\=\frac{2^{-2k}}{2(2k+1)\pi\ii} \oint_{1}
 \left(2x(H_{\mes})'(x)+\frac{2x}{x-1}\right)^{2k+1}\frac{dx}{x}
\end{multline}

Further, $\H_{\mes}(z)$ satisfies
$$
 \frac{x}{xe^x (\H_{\mes})'(e^x) + \frac{xe^x}{e^x-1}} = \frac{1}{R_{\mes}(x)+\frac{1}{x}}= (S_{\mes}(z))^{(-1)},
$$
where $(\cdot)^{(-1)}$ is the functional inversion and $S_\mes(z)$ is the moment generating
function:
$$
 S_\mes(z)=z+M_1(\mes) z^2 + M_2(\mes) z^3+\dots.
$$
Therefore,
\begin{multline}
\label{eq_x19}
 M_{2k}(\mes) = [z^{2k+1}](S(z))=\frac{1}{2k+1} [w^{2k}] \left(we^w (\H_{\mes})'(e^w) + \frac{we^w}{e^w-1}\right)^{2k+1}
 \\ = \frac{1}{(2k+1)\pi\ii} \oint_0 \left(e^w (\H_{\mes})'(e^w)+\frac{e^w}{e^{w}-1}\right)^{2k+1} dw,
\end{multline}
change of variables $x=e^w$ transforms \eqref{eq_x18} into \eqref{eq_x19}.
\end{proof}

\subsection{Proof of Theorems \ref{Theorem_main}, \ref{Theorem_main_PP}, \ref{Theorem_linearization}.}

Note that our definitions and the fact that the character of a tensor product is the product of the
characters of factors, imply that the character--generating function of measure
$\rho^{\lambda^1(N)\otimes\cdots\otimes\lambda^k(N)}$ is
\begin{equation}
\label{eq_x43}
 \CG^{G(N)}_{\rho^{\lambda^1(N)\otimes\cdots\otimes\lambda^k(N)}}(u_1,\dots,u_N)=\prod_{i=1}^k
 \frac{\chi^{G(N)}_{\lambda^i(N)}(u_1,\dots,u_N)}{\chi^{G(N)}_{\lambda^i(N)}(1^N)}.
\end{equation}
 Similarly, the character generating function for $\rho^{\alpha,\lambda^1(N)}$ is
\begin{equation}
\label{eq_x44}
 \CG^{G(N)}_{\rho^{\alpha, \lambda^1(N)}}(u_1,\dots,u_{\lfloor \alpha N\rfloor})=
 \frac{\chi^{G(N)}_{\lambda^1(N)}\left(u_1,\dots,u_{\lfloor \alpha N\rfloor}, 1^{N-\lfloor \alpha N\rfloor}\right)}{\chi^{G(N)}_{\lambda^1(N)}(1^N)}.
\end{equation}
Now we can apply Theorem \ref{Theorem_character_asymptotics} together with Theorems
\ref{theorem_moment_convergence} and \ref{theorem_moment_convergence_BCD}. They yield that the
moments of random measures $m^{U(N)}[\rho^{\lambda^1(N)\otimes\cdots\otimes\lambda^k(N)}]$,
$m^{U(N)}[\rho^{\alpha,\lambda^1(N)}]$ and $\widehat
m^{G(N)}[\rho^{\lambda^1(N)\otimes\cdots\otimes\lambda^k(N)}]$, $\widehat
m^{G(N)}[\rho^{\alpha,\lambda^1(N)}]$
 converge (in probability) to deterministic numbers. Since we are dealing with measures with compact support here (as follows from the
Littlewood--Richardson rule, the measures corresponding to tensor products in our settings have a
finite support), the moments uniquely define the corresponding measures. Thus, the above random
measures converge in the sense of moments (in probability) to deterministic ones.   Further, the
measures $\widehat m^{G(N)}[\rho^{\lambda^1(N)\otimes\cdots\otimes\lambda^k(N)}]$, $\widehat
m^{G(N)}[\rho^{\alpha,\lambda^1(N)}]$ and  $
m^{G(N)}[\rho^{\lambda^1(N)\otimes\cdots\otimes\lambda^k(N)}]$, $
m^{G(N)}[\rho^{\alpha,\lambda^1(N)}]$ asymptotically differ by the shift by $1/2$. Therefore, the
latter measures also converge in the sense of moments. This proves Theorem \ref{Theorem_main}. Now
applying Theorem \ref{theorem_moment_link_to_PP} we conclude that the Perelomov--Popov measures
also converge, which proves Theorem \ref{Theorem_main_PP}.

Moreover, for $G(N)=U(N)$ \eqref{eq_x43}, \eqref{eq_x44} yields for every $m=1,2,\dots$
\begin{multline*}
\lim_{N\to\infty}\frac{1}{N}
 \ln\left(\CG^{G(N)}_{\rho^{\lambda^1(N)\otimes\cdots\otimes\lambda^k(N)}}(u_1,\dots,u_m, 1^{N-m})\right)\\=\sum_{i=1}^k\lim_{N\to\infty}
 \frac{1}{N}\ln\left(\frac{\chi^{G(N)}_{\lambda^i(N)}(u_1,\dots,u_m,1^{N-m})}{\chi^{G(N)}_{\lambda^i(N)}(1^N)}\right),
\end{multline*}
\begin{multline*}
\lim_{N\to\infty}\frac{1}{\lfloor \alpha N\rfloor}
 \ln\left(\CG^{G(\lfloor \alpha N\rfloor)}_{\rho^{\alpha, \lambda^1(N)}}(u_1,\dots,u_m, 1^{\lfloor \alpha N\rfloor-m})\right)\\=
\frac{1}{\alpha} \lim_{N\to\infty}\frac{1}{N}
 \ln\left(\CG^{G(N)}_{\rho^{\lambda^1(N)}}(u_1,\dots,u_m, 1^{N-m})\right).
\end{multline*}
Hence, comparing \eqref{eq_x12} with \eqref{eq_limit_moments_A} and \eqref{eq_limit_of_logarithm},
and noting that function $\H^A_{\mes}(u)$ uniquely defines the moments of $\mes$ and, hence, the
measure $\mes$, we conclude that
 \begin{equation}
  \H_{\mes^1\otimes\dots\otimes\mes^k}(u)=\H_{\mes^1}(u)+\dots+\H_{\mes^k}(u).
 \label{eq_linear_1_A}
 \end{equation}
and
 \begin{equation}
 \H_{pr_\alpha^\otimes(\mes)}(u)=\frac{1}{\alpha}\H_{\mes}(u)
 \label{eq_linear_2_A}.
 \end{equation}
For $G(N)\ne U(N)$ we similarly compare \eqref{eq_x32} with \eqref{eq_limit_moments_BCD} and
\eqref{eq_limit_of_logarithm}. Observing that the asymptotic shift of measure by $1/2$ by which
Theorem \ref{Theorem_character_asymptotics} and Theorem \ref{theorem_moment_convergence_BCD}
differ, translates precisely in the additional term $\ln(x)$ in \eqref{eq_x31}, we again conclude
that identities \eqref{eq_linear_1_A}, \eqref{eq_linear_2_A} hold.

 Since
$$
 R^{quant}_{\mes}(z)=  \left( u\frac{\partial \H_{\mes}(u)}{\partial u}\right)\Biggr|_{u=e^z},
$$
the identities \eqref{eq_linear_1_A} and \eqref{eq_linear_2_A} imply \eqref{eq_linear_final_1} and
\eqref{eq_linear_final_2}, respectively.

For the Perelomov--Popov measures, observe that the identity between moment generating functions in
Theorem \ref{theorem_moment_link_to_PP} is equivalent to the following identity between Voiculescu
$R$--transforms
$$
 R_\mes(z)=R_{\mes_{PP}}(1-e^{-z}) + \frac{1}{1-e^{-z}} -\frac{1}{z}.
$$
Thus, \eqref{eq_linear_final_1} and \eqref{eq_linear_final_2} imply that
 \begin{equation}
  R_{\mes^1\boxplus\dots\boxplus\mes^k}(1-e^{-z})  =
  R_{\mes^1}(1-e^{-z}) +\dots+R_{\mes^k}(1-e^{-z}),
 \label{eq_linear_1_PP}
 \end{equation}
and
 \begin{equation}
 R_{pr_\alpha^\boxplus(\mes)}(1-e^{-z})=\frac{1}{\alpha}R_{\mes}(1-e^{-z})
 \label{eq_linear_2_PP}.
 \end{equation}
 Changing the variables $u=1-e^{-z}$ we arrive at \eqref{eq_linear_final_1_PP} and
 \eqref{eq_linear_final_2_PP} which finishes the proof of Theorem \ref{Theorem_linearization}.

\section{Appendix: Asymptotics of characters}
\label{Section_appendix} The aim of this appendix is to provide the necessary details for the
proof of Theorem \ref{Theorem_character_asymptotics}. The proof goes in two steps. The first one
is to study the $k=1$ case and in the second step we reduce general $k$ to $k=1$ using approximate
multiplicativity of the characters found in \cite{GP}.

\begin{proposition}
\label{Proposition_character_asymptotics_k1}
 Suppose that $\lambda(N)\in \widehat U (N)$ is a regular sequence of signatures, such that
 $$
  \lim_{N\to\infty} m^A[\lambda(N)]=\mes.
 $$
 Then we have
 \begin{equation}
 \label{eq_limit_of_logarithm_k1}
  \lim_{N\to\infty} \frac{1}{N}
  \ln\left(\frac{\chi^{U(N)}_{\lambda(N)}(x,1^{N-1})}{\chi^{U(N)}_{\lambda(N)}(1^N)}\right)=
  \H_\mes(x),
 \end{equation}
 where  the convergence is uniform over an open complex neighborhood of $1$.
\end{proposition}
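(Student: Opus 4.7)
The plan is to reduce the asymptotics to a one-dimensional contour integral and then apply the method of steepest descent. The starting point is the exact formula
$$\frac{\chi^{U(N)}_{\lambda(N)}(x, 1^{N-1})}{\chi^{U(N)}_{\lambda(N)}(1^N)} \;=\; \frac{(N-1)!}{(x-1)^{N-1}}\cdot\frac{1}{2\pi\ii}\oint_{\mathcal{C}} \frac{x^z\, dz}{\prod_{j=1}^N (z - \ell_j(N))},$$
where $\ell_j(N) = \lambda_j(N) + N - j$ and $\mathcal{C}$ is a positively oriented contour enclosing all $\ell_j(N)$. Via residues this equals $\frac{(N-1)!}{(x-1)^{N-1}}\sum_{i=1}^N x^{\ell_i(N)}\prod_{j\ne i}(\ell_i(N)-\ell_j(N))^{-1}$, a formula I will check by L'H\^opital-evaluating the Weyl numerator as $u_2,\dots,u_N \to 1$ (trivial for $N=2$, inductive for general $N$).

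Rescaling $z = Nw$ produces $N^{1-N}\cdot\tfrac{1}{2\pi\ii}\oint \exp\!\bigl(NF_N(w)\bigr)\,dw$ with
$$F_N(w) \;=\; w\ln x \;-\; \frac{1}{N}\sum_{j=1}^N\ln\!\bigl(w - \tfrac{\ell_j(N)}{N}\bigr).$$
By regularity and $m^A[\lambda(N)]\to\mes$, the function $F_N$ converges uniformly on compacts of $\mathbb{C}\setminus\mathrm{supp}(\mes)$ to $F(w) = w\ln x - \int\ln(w-t)\,d\mes(t)$, while the uniform bound \eqref{eq_reg2} confines all poles $\ell_j(N)/N$ to a fixed compact set. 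The saddle equation $F'(w)=\ln x - G_\mes(w)=0$, with $G_\mes(w)=\int(w-t)^{-1}d\mes(t)$, has, for $x$ near but not equal to $1$, the unique relevant solution $w^*(x) = G_\mes^{(-1)}(\ln x) = 1/S_\mes^{(-1)}(\ln x) = R_\mes(\ln x) + 1/\ln x$. Deforming $\mathcal{C}/N$ through $w^*(x)$ along the direction of steepest descent yields $\tfrac{1}{N}\ln\!\bigl|\oint e^{NF_N(w)}dw\bigr|\to F(w^*(x))$; combined with Stirling ($\tfrac{1}{N}\ln((N-1)!/N^{N-1})\to -1$) and the algebraic prefactor $(x-1)^{-(N-1)}$, the tentative limit becomes $F(w^*(x)) - 1 - \ln(x-1)$.

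It remains to identify this limit with $\H_\mes(x)$ and to upgrade pointwise convergence to uniform convergence on a complex neighborhood of $1$. For the identification I differentiate in $u=\ln x$: the saddle condition $G_\mes(w^*)=u$ gives $\tfrac{d}{du}F(w^*(e^u))=w^*(e^u)=R_\mes(u)+1/u$, so $\tfrac{d}{du}\bigl[F(w^*)-1-\ln(e^u-1)\bigr]=R_\mes(u)+1/u - e^u/(e^u-1)$, which exactly matches $\tfrac{d}{du}\H_\mes(e^u)$ computed from the definition of $\H_\mes$; a Taylor expansion at $u=0$ checks that both expressions vanish at $x=1$, pinning the integration constant. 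For uniformity, the left-hand side is analytic in $x$ on a fixed complex neighborhood of $1$ and, by \eqref{eq_reg2}, uniformly bounded on compacts, so Montel combined with Vitali's convergence theorem promotes pointwise convergence to uniform convergence. The principal obstacle will be the degeneration of the saddle at $x=1$: $w^*(x)\to\infty$ and both $F(w^*(x))$ and $-\ln(x-1)$ diverge while their sum stays analytic; tracking this cancellation — which is the very origin of the $\ln(\ln x/(x-1))$ correction in $\H_\mes$ — and justifying the steepest-descent deformation uniformly for $x$ close to $1$ is the most delicate step of the argument.
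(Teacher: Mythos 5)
Your main-term computation follows the same route as the source on which the paper leans: the contour integral representation \eqref{eq_Schur_integral} from \cite{GP}, the rescaling $z=Nw$, and a saddle-point analysis with critical point $w^*=R_\mes(\ln x)+1/\ln x$; the identification of $F(w^*(x))-1-\ln(x-1)$ with $\H_\mes(x)$ by differentiating in $u=\ln x$ and matching the value at $u=0$ is correct. Note, however, that the paper does not redo this analysis: it cites \cite{GM} and \cite[Proposition 4.1]{GP} for the pointwise limit and devotes its entire proof to checking that the two remainder estimates in the steepest-descent argument of \cite{GP} are \emph{uniform} over the relevant (large) complex parameter, which is exactly what the regularity hypothesis \eqref{eq_reg2} is used for.

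The genuine gap is in your treatment of that same uniformity, which you outsource to Montel/Vitali. To apply Vitali you need the functions $g_N(x)=\frac1N\ln\bigl(\chi^{U(N)}_{\lambda(N)}(x,1^{N-1})/\chi^{U(N)}_{\lambda(N)}(1^N)\bigr)$ to be single-valued analytic and uniformly bounded on a \emph{fixed} complex neighborhood of $1$. Analyticity requires the normalized character to be zero-free there, and uniform boundedness of $g_N$ requires the two-sided bound $e^{-CN}\le \bigl|\chi^{U(N)}_{\lambda(N)}(x,1^{N-1})/\chi^{U(N)}_{\lambda(N)}(1^N)\bigr|\le e^{CN}$. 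The upper bound does follow from \eqref{eq_reg2} (nonnegative coefficients plus degree $O(N)$), but the lower bound and the zero-freeness do not: a polynomial with nonnegative coefficients summing to $1$ can vanish arbitrarily close to $1$ (e.g.\ $(1+x^N)/2$), so positivity alone is not enough, and ruling this out for normalized Schur polynomials is essentially equivalent to the uniform asymptotics you are trying to prove. In other words, the Vitali shortcut is circular at exactly the point where the work lies; to close it you must carry out the steepest-descent estimates uniformly in $x$ near $1$ --- including the regime you flag but defer, where the saddle $w^*\sim 1/\ln x$ escapes to infinity and the contour deformation must be controlled uniformly. That uniform control of the remainders is precisely the content the paper's proof supplies on top of \cite{GP}.
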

\begin{proof}

This statement for real $x$ and without uniformity estimate first appeared in \cite{GM}, see
Theorem 1.2 and explanation on the relation to Schur polynomials at the end of Section 1.1 there. A
closely related statement, which corresponds to $\beta=1$ (``orthogonal'') matrix integrals, as
opposed to \eqref{eq_limit_of_logarithm_k1} corresponding to $\beta=2$ (``unitary'') matrix
integrals, for complex $x$ is \cite[Theorem 1.4]{GM}. An alternative approach based on contour
integral representation for Schur polynomials was suggested in \cite{GP} and
\eqref{eq_limit_of_logarithm_k1} (again, without uniformity estimate) is a corollary of the results
of \cite{GP}, see Proposition 4.1 and remark at the end of Section 4.2 there. The uniformity,
probably, can be proved both by methods of \cite{GM} or \cite{GP}. Let us fill in the required
details for the latter approach. We should check that remainders in \cite[Proposition 4.1]{GP} are
\emph{uniformly} small.

First, we should bound $\ln(\mathcal Q(w,\lambda,f))$ in \cite[Lemma 4.4]{GP}. This logarithm is
\begin{equation}
\label{eq_difference}
 \left(\sum_{j=1}^N \ln\left(w-\frac{\lambda_j(N)+N-j}{N}\right)\right)-N\int_0^1\ln(w-f(t)-1+t) dt,
\end{equation}
where $w$ is a large enough complex number (the exact set where $w$ varies depends on $x$).
Clearly, when $\lambda(N)$ is regular, the bound of \cite{GP} claiming that \eqref{eq_difference}
is $o(N)$ becomes uniform over large complex $w$.

Second, we should make sure that the logarithm of \cite[(4.12)]{GP} divided by $N$ tends to $0$
\emph{uniformly} over large complex $w_0$. But again this immediately follows from the definitions
of $\tilde \delta$, and $u$ there.
\end{proof}

An analogue of Proposition \ref{Proposition_character_asymptotics_k1} for the root systems $B$, $C$
and $D$  is obtained through Propositions
\ref{Proposition_Schur_Symplectic_1}--\ref{Proposition_Schur_Ortho_2} which \emph{reduce}
normalized characters of symplectic and orthogonal groups to the normalized characters of unitary
groups. We still do not know any conceptual representation--theoretic explanation for these
reductions and it would be very interesting to find one.

Recall that characters of unitary group $U(N)$ are identified with \emph{Schur polynomials}
$s_\lambda(u_1,\dots,u_N)$ and the following integral representation for them was found in
\cite[Theorem 1.1]{GP}:
\begin{equation}
\label{eq_Schur_integral}
 \frac{s_\lambda(x,1^{N-1})}{s_\lambda(1^N)}=\frac{(N-1)!}{2\pi\ii(x-1)^{N-1}}\oint
 \frac{x^z dz}{\prod_{i=1}^N(z-(\lambda_i+N-i))}.
\end{equation}

\begin{proposition}
\label{Proposition_Schur_Symplectic_1} For any signature $\lambda\in\widehat{Sp}(2N)$  we have
\begin{equation}
\frac{\chi^{Sp(2N)}_{\lambda}(x,1^{N-1})}{\chi^{Sp(2N)}_{\lambda}(1^{N})} =
\frac{2}{x+1}\frac{s_\nu(x,1^{2N-1})}{s_\nu(1^{2N})} ,
\end{equation}
where $\nu\in\widehat U (2N)$ is $(\lambda_1+1,\dots,\lambda_N+1,-\lambda_N,\dots,-\lambda_1)$.
\end{proposition}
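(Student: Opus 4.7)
The strategy is to reduce both sides of the identity to the common closed-form expression
\[
\mathcal{E}(x) \;:=\; \frac{(2N-1)!\,x^N}{(x-1)^{2N-1}(x+1)}\sum_{i=1}^N\frac{x^{\mu_i}-x^{-\mu_i}}{\mu_i\prod_{j\ne i}(\mu_i^2-\mu_j^2)},
\]
where $\mu_j=\lambda_j+N+1-j$. A useful preliminary check is the case $N=1$, in which the identity reduces to the elementary equality $\tfrac{x^{\mu_1}-x^{-\mu_1}}{(x-x^{-1})\mu_1}=\tfrac{x^{-\lambda_1}(x^{2\mu_1}-1)}{(x^2-1)\mu_1}$, and the case $N=2$ can be verified by hand from Weyl's formula after a single L'Hopital step.

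For the right-hand side I would apply \eqref{eq_Schur_integral} to $s_\nu$. A direct check from $\nu=(\lambda_1+1,\dots,\lambda_N+1,-\lambda_N,\dots,-\lambda_1)$ shows that $\nu_j+2N-j$ equals $\mu_j+N$ for $j\le N$ and $N-\mu_{2N+1-j}$ for $j>N$, so that the substitution $w=z-N$ transforms \eqref{eq_Schur_integral} into
\[
\frac{s_\nu(x,1^{2N-1})}{s_\nu(1^{2N})}=\frac{(2N-1)!\,x^N}{2\pi i(x-1)^{2N-1}}\oint\frac{x^w\,dw}{\prod_{i=1}^N(w^2-\mu_i^2)}.
\]
Pairing the residues at $w=\pm\mu_i$ into $\tfrac{x^{\mu_i}-x^{-\mu_i}}{2\mu_i\prod_{j\ne i}(\mu_i^2-\mu_j^2)}$, and then multiplying by $\tfrac{2}{x+1}$, produces $\mathcal{E}(x)$.

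For the left-hand side I would use the Chebyshev reduction of the Weyl formula: the factorization $u^a-u^{-a}=(u-u^{-1})S_a(v)$, with $v=u+u^{-1}$ and $S_a(v)$ a polynomial of degree $a-1$ with leading coefficient $1$, converts \eqref{eq_character_C} into $\chi^{Sp(2N)}_\lambda=\det[S_{\mu_j}(v_i)]_{i,j=1}^N/\prod_{i<j}(v_i-v_j)$. The specialization $v_1=x+x^{-1}$, $v_i=2$ for $i\ge 2$ is $0/0$, which I resolve by the confluent Vandermonde method: parametrize $u_i=e^{\epsilon\tau_i}$ for $i\ge 2$ with distinct $\tau_i$, use the odd-parity expansion $2\sinh(\mu_j\epsilon\tau_i)=\sum_{k\ge 0}2\mu_j^{2k+1}(\epsilon\tau_i)^{2k+1}/(2k+1)!$, and isolate the leading $\epsilon^{(N-1)^2}$ contribution. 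The numerator then factorizes as a $\tau$-only determinant (which cancels an identical factor arising from the expansion of the $v$-Vandermonde denominator) times an $N\times N$ content determinant $D$ with first row $(x^{\mu_j}-x^{-\mu_j})_j$ and lower rows $(\mu_j^{2k-1})_j$ for $k=1,\dots,N-1$. Laplace-expanding $\det D$ along its first row and recognizing the minors as Vandermondes in $\mu_l^2$ yields
\[
\det D \;=\; (-1)^{N+1}\prod_{i<j}(\mu_j^2-\mu_i^2)\cdot\prod_l\mu_l\cdot\sum_{j=1}^N\frac{x^{\mu_j}-x^{-\mu_j}}{\mu_j\prod_{k\ne j}(\mu_j^2-\mu_k^2)}.
\]
Dividing by the Weyl dimension $\chi^{Sp(2N)}_\lambda(1^N)=\prod_{i<j}\tfrac{\mu_i^2-\mu_j^2}{(j-i)(2N+2-i-j)}\cdot\prod_i\tfrac{\mu_i}{N+1-i}$, and invoking the elementary identities $(x-x^{-1})(x+x^{-1}-2)^{N-1}=(x-1)^{2N-1}(x+1)/x^N$ together with $\prod_{i<j}(j-i)(2N+2-i-j)\cdot\prod_i(N+1-i)=(2N-1)!\cdot\prod_{k=1}^{N-1}(2k-1)!$, the LHS collapses to $\mathcal{E}(x)$.

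The main obstacle will be the confluent-limit computation on the symplectic side. The odd parity of $2\sinh(\mu t)$ in $t$ is essential: it forces the leading $\epsilon$-order to be exactly $(N-1)^2$ and makes the combinatorial sum collapse into a clean product of two determinants rather than an unwieldy multiple sum. The origin of the prefactor $2/(x+1)$ is then transparent: the $C_N$ Weyl denominator contains an extra $(u-u^{-1})=(x-1)(x+1)/x$ beyond the $(x-1)^{2N-1}$ present in the $A_{2N-1}$ Schur integral formula, and $\tfrac{2}{x+1}$ exactly compensates the extra $(x+1)$. The two identities used to match constants can be checked by induction on $N$ or extracted from Weyl's dimension formula for $Sp(2N)$ applied to the trivial representation.
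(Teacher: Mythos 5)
Your proof is correct. The paper itself disposes of this proposition with a one-line citation to \cite[Proposition 3.19]{GP}, so the substantive comparison is with the method the paper does spell out for the sibling statements (Propositions \ref{Proposition_Schur_Ortho_1} and \ref{Proposition_Schur_Ortho_2}): there the normalized character is first written as a single contour integral over all poles $\pm\mu_i$ and then matched, after a shift of the integration variable, against the Schur integral \eqref{eq_Schur_integral}. Your argument is that same computation with the contour integral unpacked into its residue sum: your $\mathcal{E}(x)$ is exactly the residue expansion of $\frac{2}{x+1}\cdot\frac{(2N-1)!\,x^N}{2\pi\ii(x-1)^{2N-1}}\oint\frac{x^w\,dw}{\prod_i(w^2-\mu_i^2)}$ (the index check $\nu_j+2N-j=\mu_j+N$ for $j\le N$ and $=N-\mu_{2N+1-j}$ for $j>N$ is right), and your confluent-Vandermonde evaluation of $\chi^{Sp(2N)}_\lambda(x,1^{N-1})$ --- cofactor expansion along the $x$-row, the parity of $\sinh$ forcing the exponents $1,3,\dots,2N-3$, cancellation of the $\tau$-Vandermonde --- is the standard derivation of that residue sum directly from \eqref{eq_character_C}; the paper performs the identical manipulation for $SO(2N)$ in \eqref{eq_x34}--\eqref{eq_x35}. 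The two bookkeeping identities you invoke, $(x-x^{-1})(x+x^{-1}-2)^{N-1}=(x-1)^{2N-1}(x+1)/x^N$ and $\prod_{i<j}(j-i)(2N+2-i-j)\prod_i(N+1-i)=(2N-1)!\prod_{k=1}^{N-1}(2k-1)!$, both check out (the latter is exactly what cancels the $\prod_{k=0}^{N-2}(2k+1)!$ produced by the confluent limit), and the $N=1$ sanity check agrees. The only thing left implicit is the overall sign in the Laplace expansion of $\det D$ against the Weyl dimension formula, which is forced to be $+1$ by letting $x\to1$. So the route is the paper's route, carried out in full rather than cited.
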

\begin{proof} This is \cite[Proposition 3.19]{GP}. \end{proof}

\begin{proposition}
\label{Proposition_Schur_Ortho_1} For any signature $\lambda\in\widehat{SO}(2N+1)$  we have
\begin{equation}
\frac{\chi^{SO(2N+1)}_{\lambda}(x,1^{N-1})}{\chi^{SO(2N+1)}_{\lambda}(1^{N})} =
\frac{s_\nu(x,1^{2N-1})}{s_\nu(1^{2N})} ,
\end{equation}
where $\nu\in\widehat U (2N)$ is $(\lambda_1,\dots,\lambda_N,-\lambda_N,\dots,-\lambda_1)$.
\end{proposition}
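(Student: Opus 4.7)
My plan is to adapt the strategy behind \cite[Proposition 3.19]{GP} (the symplectic analogue recorded as Proposition \ref{Proposition_Schur_Symplectic_1} above) to the $SO(2N+1)$ case. The essential idea is that both sides of the claimed identity, after suitable manipulation, collapse to the same sum of residues of a single contour integral. The $SO(2N+1)$ situation should in fact be slightly cleaner than the $Sp(2N)$ one: no prefactor analogous to $\tfrac{2}{x+1}$ appears, which matches the fact that the quantum numbers $\mu_j=\lambda_j+N+\tfrac12-j$ are shifted by a half-integer so that the interlacing pattern with $-\mu_j$ is exactly symmetric.

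For the right-hand side I would start from the integral representation \eqref{eq_Schur_integral} specialized to $M=2N$ and $\nu=(\lambda_1,\dots,\lambda_N,-\lambda_N,\dots,-\lambda_1)$. A direct computation of the Schur exponents $\nu_j+2N-j$ gives $\lambda_j+2N-j$ for $j=1,\dots,N$ and, after reindexing by $k=2N+1-j$, $-\lambda_k+k-1$ for the remaining indices. Rewritten in terms of $\mu_j$, these exponents are exactly $\{(N-\tfrac12)+\mu_j\}_{j=1}^N\cup\{(N-\tfrac12)-\mu_j\}_{j=1}^N$, i.e.\ symmetric about $N-\tfrac12$. Substituting $z=w+N-\tfrac12$ in \eqref{eq_Schur_integral} turns this into
\[
\frac{s_\nu(x,1^{2N-1})}{s_\nu(1^{2N})}=\frac{(2N-1)!\,x^{N-1/2}}{2\pi\ii\,(x-1)^{2N-1}}\oint\frac{x^w\,dw}{\prod_{j=1}^N(w-\mu_j)(w+\mu_j)},
\]
with the contour enclosing all the poles $\pm\mu_j$. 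Evaluating the residues yields the explicit symmetric form
\[
\frac{s_\nu(x,1^{2N-1})}{s_\nu(1^{2N})}=\frac{(2N-1)!\,x^{N-1/2}}{(x-1)^{2N-1}}\sum_{j=1}^N\frac{x^{\mu_j}-x^{-\mu_j}}{2\mu_j\prod_{k\ne j}(\mu_j^2-\mu_k^2)}.
\]

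For the left-hand side I would work directly from the Weyl formula \eqref{eq_character_B}. Setting $u_1=x$ and $u_2=\dots=u_N=1$ makes both numerator and denominator vanish to order $N-1$, so the value is extracted by a careful L'H\^opital / Taylor expansion. Expanding the numerator determinant along the row indexed by $u_1=x$ isolates the factors $x^{\mu_j}-x^{-\mu_j}$, and the $(N-1)\times(N-1)$ minors, divided by the degenerate factor $\prod_{i\ge 2}(u_i^{1/2}-u_i^{-1/2})\prod_{2\le i<j}(u_i+u_i^{-1}-u_j-u_j^{-1})$ and normalized by $\chi^{SO(2N+1)}_\lambda(1^N)$ (which Weyl's dimension formula expresses as a symmetric rational expression in the $\mu_k^2$), should collapse onto the very same coefficients $\bigl(2\mu_j\prod_{k\ne j}(\mu_j^2-\mu_k^2)\bigr)^{-1}$ appearing above. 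Matching the two expressions completes the proof.

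The main obstacle will be the execution of this last L'H\^opital step, i.e.\ showing that the limit of the minors, divided by the doubly degenerate denominator and by the dimension, reproduces exactly the symmetric Cauchy-type kernel $\prod_{k\ne j}(\mu_j^2-\mu_k^2)^{-1}$ and the prefactor $(2N-1)!\,x^{N-1/2}/(x-1)^{2N-1}$. The bookkeeping is parallel to the symplectic case in \cite{GP}, the difference being that the $Sp(2N)$ quantum numbers are integers shifted by $N+1$ while here one has half-integer shifts by $N+\tfrac12$; the latter is precisely what removes the need for the correction factor $\tfrac{2}{x+1}$ present in Proposition \ref{Proposition_Schur_Symplectic_1}. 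Once this symbolic computation is completed, the identity between the two residue sums is immediate.
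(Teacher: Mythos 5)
Your proposal is correct and follows essentially the same route as the paper: both match the two sides through the contour integral with poles at $\pm(\lambda_i+N-i+\tfrac12)$, using the symmetry of the exponents of $\nu$ about $N-\tfrac12$ and the shift $z\mapsto z+N-\tfrac12$ to recognize the Schur integral representation \eqref{eq_Schur_integral}. The one step you flag as the ``main obstacle'' --- degenerating the Weyl formula \eqref{eq_character_B} at $(x,1^{N-1})$ into the residue sum $\sum_j\frac{x^{\mu_j}-x^{-\mu_j}}{2\mu_j\prod_{k\ne j}(\mu_j^2-\mu_k^2)}$ --- is exactly what the paper sidesteps by citing the integral formula \eqref{eq_x14} from \cite{GP}, \cite{HJ}, and it is carried out explicitly (and unproblematically) in the paper's own proof of the $SO(2N)$ case, so your plan goes through.
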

\begin{proof}
Following the method of \cite[Section 3]{GP} one proves the following integral formula for
characters of $SO(2N+1)$, which can be also found in \cite[Section 2.1]{HJ}.
\begin{equation}
\label{eq_x14} \frac{\chi^{SO(2N+1)}_{\lambda}(x,1^{N-1})}{\chi^{SO(2N+1)}_{\lambda}(1^{N})}=
\frac{(2N-1)!}{2\pi\ii ( (x^{1/2}-x^{-1/2}))^{2N-1}} \oint \frac{(x^{z}-x^{-z}) dz} {\prod_{i=1}^n
(z^2-(\lambda_i+N-i+1/2)^2)},
\end{equation}
where the integration goes around the poles at $\lambda_i+N-i+1/2$, $i=1,\dots,N$. We claim that
\eqref{eq_x14} is the same as
\begin{equation}
\label{eq_x15} \frac{(2N-1)!}{2\pi\ii ( (x^{1/2}-x^{-1/2}))^{2N-1}} \oint \frac{(x^{z}) dz}
{\prod_{i=1}^n (z-(\lambda_i+N-i+1/2))(z+(\lambda_i+N-i+1/2))},
\end{equation}
with integration going around \emph{all} poles of the integrand. Indeed, to prove this just expand
both \eqref{eq_x14} and \eqref{eq_x15} as sums of residues. Further, shifting $z$ by $N-1/2$, we
arrive at
$$
\frac{(2N-1)!}{2\pi\ii ( (x-1))^{2N-1}} \oint \frac{(x^{z}) dz} {\prod\limits_{i=1}^N
\bigl(z-(\lambda_i+2N-i)\bigr) \prod\limits_{i=N+1}^{2N}\bigl(z-(-\lambda_{2N+1-i}+(2N-i))\bigr)},
$$
which is the integral formula \eqref{eq_Schur_integral} for
$\frac{s_\nu(x,1^{2N-1})}{s_\nu(1^{2N})}$.
\end{proof}

\begin{proposition}
\label{Proposition_Schur_Ortho_2} Take $N>1$ and a signature $\lambda\in\widehat{SO}(2N)$. If
$\lambda_N=0$, then we have
\begin{equation}
\label{eq_x36} \frac{\chi^{SO(2N)}_{\lambda}(x,1^{N-1})}{\chi^{SO(2N)}_{\lambda}(1^{N})} =
\frac{s_\nu(x,1^{2N-2})}{s_\nu(1^{2N-1})} ,
\end{equation}
where $\nu\in\widehat U (2N-1)$ is
$(\lambda_1,\dots,\lambda_{N-1},0,-\lambda_{N-1},\dots,-\lambda_1)$. If $\lambda_N\ne 0$,
\begin{equation}
\label{eq_x37} \frac{\chi^{SO(2N)}_{\lambda}(x,1^{N-1})}{\chi^{SO(2N)}_{\lambda}(1^{N})} =
 \left(1+(1-x^{-1}) \frac{x\frac{\partial}{\partial x}-N}{2N-1}\right)
 \frac{s_\nu(x,1^{2N-1})}{s_\nu(1^{2N})} ,
\end{equation}
where $\nu\in\widehat U (2N)$ is
$(\lambda_1,\dots,\lambda_{N-1},|\lambda_N|,1-|\lambda_N|,1-\lambda_{N-1},\dots,1-\lambda_1)$.
\end{proposition}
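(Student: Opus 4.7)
The plan is to mirror the proofs of Propositions \ref{Proposition_Schur_Symplectic_1} and \ref{Proposition_Schur_Ortho_1}: I will first derive a contour-integral representation for the normalized $SO(2N)$ character $\chi^{SO(2N)}_\lambda(x,1^{N-1})/\chi^{SO(2N)}_\lambda(1^N)$, and then recognize this integral as, up to an explicit first-order differential operator, the Schur integral \eqref{eq_Schur_integral} for the signature $\nu$ given in the statement. The integral representation itself is obtained by applying the method of \cite{GP} to the Weyl formula \eqref{eq_character_D}: substitute $u_1=x$ and $u_j=1$ for $j\ge 2$, resolve the resulting indeterminate form via iterated L'Hopital, and expand both determinants $\det[u_i^{\mu_j}\pm u_i^{-\mu_j}]$ by partial fractions with poles at $z=\pm\mu_j$. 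The output is a contour integral of the schematic form
$$\frac{c_N}{(x^{1/2}-x^{-1/2})^{2N-2}}\oint \frac{(x^z+x^{-z})\,A(z)+(x^z-x^{-z})\,B(z)}{\prod_{j=1}^N(z^2-\mu_j^2)}\,dz,$$
where $A,B$ are explicit polynomials of low degree and the contour encloses all $\pm\mu_j$.

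For the case $\lambda_N=0$ we have $\mu_N=0$, which forces the last column of $\det[u_i^{\mu_j}-u_i^{-\mu_j}]$ to vanish identically and kills the $(x^z-x^{-z})$ piece. After folding the remaining integral via $z\mapsto -z$ and shifting $z\mapsto z-(N-1)$, the poles land at exactly $\nu_i+2N-1-i$ for $\nu=(\lambda_1,\dots,\lambda_{N-1},0,-\lambda_{N-1},\dots,-\lambda_1)\in\widehat U(2N-1)$, and a direct comparison of the combinatorial prefactors (rewriting $(x^{1/2}-x^{-1/2})^{2N-2}$ as $x^{-(N-1)}(x-1)^{2N-2}$) yields \eqref{eq_x36} upon matching with \eqref{eq_Schur_integral}.

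For the case $\lambda_N\ne 0$ both pieces of the integral survive. One first checks that $\nu=(\lambda_1,\dots,\lambda_{N-1},|\lambda_N|,1-|\lambda_N|,1-\lambda_{N-1},\dots,1-\lambda_1)$ is indeed an element of $\widehat U(2N)$ (the only nontrivial inequality $|\lambda_N|\ge 1-|\lambda_N|$ is forced by $|\lambda_N|\ge 1$, which holds because $\lambda_N$ is a nonzero integer). After the same symmetrization $z\mapsto -z$ and shift, the $x^z+x^{-z}$ piece matches the Schur integral \eqref{eq_Schur_integral} for this $\nu$. The remaining $x^z-x^{-z}$ contribution is what produces the correction operator: differentiating $x^z$ in $x$ brings down a factor of $z$ inside the contour, and after the shift the symmetric set of poles is centered at $N$, so the combination $x\partial_x-N$ realizes multiplication by $z$ relative to the shifted center; the prefactor $(1-x^{-1})/(2N-1)$ arises from the discrepancy between the Schur normalization $(x-1)^{2N-1}$ and the orthogonal normalization $(x^{1/2}-x^{-1/2})^{2N-2}$, together with the combinatorial factor $(2N-1)!/(2N-2)!$. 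The main obstacle of the proof is the bookkeeping in this last step: verifying that every numerical constant (and no extra term of lower order in $z$) combines exactly into the operator displayed in \eqref{eq_x37}, which essentially boils down to a careful L'Hopital calculation and a residue comparison against the $2N$-variable Schur integral.
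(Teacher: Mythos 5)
Your overall strategy (a contour--integral representation for the normalized character, a shift of the integration variable, and comparison with the Schur integral \eqref{eq_Schur_integral}) is the same as the paper's, but the mechanism you propose for the correction operator in the case $\lambda_N\ne 0$ is wrong, and this is exactly where the proof breaks. The antisymmetric determinant $\det\left[u_i^{\mu_j}-u_i^{-(\mu_j)}\right]$ in \eqref{eq_character_D} does not contribute at all once any variable is set to $1$: every entry of a row with $u_i=1$ vanishes, and more precisely the ratio of this determinant to $\prod_{i<j}(u_i+u_i^{-1}-u_j-u_j^{-1})$ carries an overall factor $\prod_i(u_i-u_i^{-1})$ by antisymmetry under $u_i\mapsto u_i^{-1}$. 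Since $N>1$ and $u_2=\dots=u_N=1$, your function $B(z)$ is identically zero in \emph{both} cases, not only when $\lambda_N=0$. Consequently the $(x^z-x^{-z})$ piece cannot be ``what produces the correction operator,'' and the argument you describe would yield no correction term at all in \eqref{eq_x37}.

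The actual source of the operator $1+(1-x^{-1})\frac{x\frac{\partial}{\partial x}-N}{2N-1}$ lies elsewhere. Expanding the surviving (symmetric) determinant gives the finite sum \eqref{eq_x35}, $\sum_j \bigl(x^{\mu_j}+x^{-\mu_j}\bigr)/\prod_{i\ne j}(\mu_j^2-\mu_i^2)$, whose denominators are quadratic in $\mu_j$. To realize this sum as a single contour integral over the full symmetric pole set $\{\pm\mu_j\}$ --- which is what must match the $2N$ poles of the Schur integral for $\nu\in\widehat U(2N)$ --- one is forced to put $z\,x^z$, not $x^z$, in the numerator. After shifting $z$ by $N$ this extra factor becomes $z-N$, realized by $x\frac{\partial}{\partial x}-N$ acting on $x^z$; the prefactor $(1-x^{-1})/(2N-1)$ and the additive $1$ then come from rewriting $(x+x^{-1}-2)^{N-1}$ as $x^{1-N}(x-1)^{2N-2}$ and from commuting $x\frac{\partial}{\partial x}$ past $(x-1)^{2N-1}$ in the normalization of \eqref{eq_Schur_integral}. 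This also explains why the two cases split as they do: when $\lambda_N=0$ the point $\mu_N=0$ is a single self-symmetric pole, the full pole set has only $2N-1$ elements, no extra factor of $z$ is needed, and one lands directly on the $(2N-1)$-variable Schur integral of \eqref{eq_x36}. Your verification that $\nu$ is a valid signature and your identification of the shifts are fine, but as written the proposal cannot produce \eqref{eq_x37}.
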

\begin{proof}
First, note that in the Weyl formula \eqref{eq_character_D} for the character of $SO(2N)$, the
second term vanishes when at least one of $u_i$ is $1$, which is our case. The first term does not
change under the transformation $\lambda_N\to-\lambda_N$ and, thus, we assume $\lambda_N\ge 0$.
Following the approach of \cite[Section 3]{GP} we write
\begin{multline}
\label{eq_x34}
 \frac{\det\left[u_i^{\lambda_j+N-j}+u_i^{-(\lambda_j+N-j)}\right]_{i,j=1}^N}{\prod\limits_{i<j}(u_i+u_i^{-1}-(u_j+u_j^{-1}))}
\\=\sum_{j=1}^N (-1)^{j-1}\frac{
u_1^{\lambda_j+N-j}+u_1^{-(\lambda_j+N-j)}}{\prod_{i=2}^N(u_1+u_1^{-1}-(u_j+u_j^{-1}))}\cdot
\frac{\det\left[M^{(i)}\right]}{\prod\limits_{1<\alpha<\beta\le
N}(u_\alpha+u_\alpha^{-1}-(u_\beta+u_\beta^{-1}))},
\end{multline}
where $M^{(i)}$ is the submatrix of $\left[u_i^{\lambda_j+N-j}+u_i^{-(\lambda_j+N-j)}\right]$
obtained by crossing out the $i$th row and column. Now we substitute $u_1=x$, $u_2=u_3=\dots=u_N=1$
in the right side \eqref{eq_x34} and divide by the result of the substitution
$u_1=u_2=\dots=u_N=1$. We get (essentially we are using Weyl's dimension formula for the dimension
of representation of $SO(2N)$):
\begin{equation}
\label{eq_x35}
 \frac{\chi^{SO(2N)}_{\lambda}(x,1^{N-1})}{\chi^{SO(2N)}_{\lambda}(1^{N})}=
 \frac{\prod_{i=0}^{N-2}((N-1)^2-i^2)}{\prod_{i=2}^N(x+x^{-1}-2)} \sum_{j=1}^N
 \dfrac{x^{\lambda_j+N-j}+x^{-(\lambda_j+N-j)}}{\prod_{i\ne j}((\lambda_j+N-j)^2-(\lambda_i+N-i)^2)}
\end{equation}
 When $\lambda_N\ne 0$, we transform the last sum into a contour integral
$$
\frac{\prod_{i=0}^{N-2}((N-1)^2-i^2)}{2\pi\ii (x+x^{-1}-2)^{N-1}} \oint \frac{2z(x^z+x^{-z})}{
\prod_{i=1}^N(z^2-(\lambda_i+N-i)^2)}dz,
$$
with the integration contour enclosing the singularities at $\lambda_i+N-i$, $i=1,\dots,N$.
Equivalently,
$$
\frac{(2N-2)!}{2\pi\ii(x+x^{-1}-2)^{N-1}} \oint \frac{z x^z}{
\prod_{i=1}^N(z^2-(\lambda_i+N-i)^2)}dz,
$$
with the integration contour enclosing \emph{all} singularities. Shifting $z$ by $N$ we arrive at
$$
\frac{(2N-1)! (1-x^{-1}) }{2\pi\ii(2N-1)(x-1)^{2N-1}} \oint \frac{(z-N) x^{z} dz}{
\prod_{i=1}^N(z-(\lambda_i+2N-i))\prod_{i={N+1}}^{2N}(z-(-\lambda_{2N+1-i}+1+2N-i))},
$$
Using \eqref{eq_Schur_integral} the last expression is readily identified with
$$
 \frac{(1-x^{-1})}{2N-1} \left( (x-1)^{1-2N}\circ (x \frac{\partial}{\partial x})\circ(x-1)^{2N-1}-N\right)
 \left(\frac{s_\nu(x,1^{2N-1})}{s_\nu(1^{2N})}\right),
$$
 which is \eqref{eq_x37}.
On the other hand, if $\lambda_N=0$, then \eqref{eq_x35} is
$$
\frac{(2N-2)!}{2\pi\ii(x+x^{-1}-2)^{N-1}} \oint \frac{x^z}{z
\prod_{i=1}^{N-1}(z^2-(\lambda_i+N-i)^2)}dz,
$$
with the integration contour enclosing all singularities. Shifting $z$ by $N-1$ we arrive at
$$
\frac{(2N-2)!}{(x-1)^{2N-2}} \oint
\frac{x^zdz}{\left(\prod_{i=1}^{N-1}(z-(\lambda_i+2N-i))\right)(z-N)
\left(\prod_{i=N+1}^{2N-1}(z-(-\lambda_{2N-i}+2N-i))\right)},
$$
which is the integral representation \eqref{eq_Schur_integral} for$
 \frac{s_\nu(x,1^{2N-2})}{s_\nu(1^{2N-1})}.
$
\end{proof}

The above propositions imply the following.
\begin{corollary}
\label{corollary_character_asymptotics_k1}
 Suppose that $\lambda(N)\in \hG (N)$ is a regular sequence of signatures, such that
 $$
  \lim_{N\to\infty} m^G[\lambda(N)]=\mes.
 $$
 Then we have
 \begin{equation}
 \label{eq_limit_of_logarithm_k1_BCD}
  \lim_{N\to\infty} \frac{1}{\hat N}
  \ln\left(\frac{\chi^{G(N)}_{\lambda(N)}(x,1^{N-1})}{\chi^{G(N)}_{\lambda(N)}(1^N)}\right)=
  \H_\mes(x),
 \end{equation}
 where  the convergence is uniform over an open complex neighborhood of $1$. $\hat N=N$ for
 unitary group $U(N)$ and $\hat N=2N$ for orthogonal and symplectic groups.
\end{corollary}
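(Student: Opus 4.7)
\medskip

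\noindent\textbf{Proof plan.} The strategy is to reduce each of the orthogonal/symplectic cases to the already-established unitary case (Proposition \ref{Proposition_character_asymptotics_k1}) via the reduction identities of Propositions \ref{Proposition_Schur_Symplectic_1}--\ref{Proposition_Schur_Ortho_2}. In each case, the ratio $\chi^{G(N)}_{\lambda(N)}(x,1^{N-1})/\chi^{G(N)}_{\lambda(N)}(1^N)$ is expressed in terms of a normalized Schur ratio $s_{\nu(N)}(x,1^{M-1})/s_{\nu(N)}(1^M)$, where $\nu(N)\in\widehat U(M)$ is a suitable ``doubled'' or ``folded'' signature with $M=2N$, $2N-1$, or $2N$, and possibly a bounded multiplicative prefactor or a mild differential operator. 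The plan is (i) to verify that $\nu(N)$ is a regular sequence in the sense of Definition \ref{definition_regularity} with limiting counting measure $m^A[\nu(N)]\to\mes$, (ii) to apply Proposition \ref{Proposition_character_asymptotics_k1} to obtain the $\H_\mes$-asymptotics of the Schur ratio, and (iii) to check that the auxiliary prefactors and differential operators contribute negligibly after dividing by $\hat N$.

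For step (i), an elementary calculation from Definition \ref{Definition_PP} shows that the $M$ coordinates $\{\nu_i(N)+M-i\}_{i=1}^{M}$ are, up to $O(1)$ shifts, precisely the union $\{\lambda_j+2N-j\}_{j=1}^N \cup \{j-\lambda_j\}_{j=1}^N$ (with minor adjustments in the $B$, $D$ cases, where one extra coordinate at the middle appears). In other words, the design of the non-standard counting measures $m^B, m^C, m^D$ in Section \ref{Section_Setup} is precisely tuned so that $m^A[\nu(N)] = m^G[\lambda(N)] + o(1)$ weakly, and the regularity of $\lambda(N)$ together with the uniform bound \eqref{eq_reg2} immediately transfers to $\nu(N)$ via this identification. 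Hence Proposition \ref{Proposition_character_asymptotics_k1} (applied with $N$ replaced by $M\sim\hat N$) gives
\[
\frac{1}{\hat N}\ln\!\left(\frac{s_{\nu(N)}(x,1^{M-1})}{s_{\nu(N)}(1^M)}\right)\longrightarrow \H_\mes(x),
\]
uniformly in a complex neighborhood of $1$.

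For step (iii) in the cases $G=Sp(2N)$, $SO(2N+1)$, and $SO(2N)$ with $\lambda_N=0$, the reduction formulas only introduce a bounded prefactor (either $\frac{2}{x+1}$, $1$, or nothing); after taking $\frac{1}{\hat N}\ln(\cdot)$, such a factor tends to $0$ uniformly on compacta avoiding the singularity $x=-1$, so the claimed limit follows directly. The only non-trivial case is $G=SO(2N)$ with $\lambda_N\neq 0$, where a first-order differential operator $\mathcal{L}_N = 1 + (1-x^{-1})\frac{x\partial_x - N}{2N-1}$ is applied to the Schur ratio $f_N(x):=s_{\nu(N)}(x,1^{2N-1})/s_{\nu(N)}(1^{2N})$. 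Writing $f_N(x) = \exp(2N\,\H_\mes(x)+r_N(x))$ with $r_N/N\to 0$ uniformly, and noting that by Cauchy's estimates the uniform convergence of $r_N/N$ transfers to $r_N'/N$, we obtain
\[
\mathcal{L}_N f_N(x) \;=\; \Bigl(1 + (1-x^{-1})\bigl(x\,\H_\mes'(x) - \tfrac12\bigr) + o(1)\Bigr)\,f_N(x),
\]
so that the prefactor converges to a holomorphic function which equals $1$ at $x=1$ and is therefore bounded and bounded away from $0$ in a complex neighborhood of $1$. Taking $\frac{1}{\hat N}\ln$ kills the prefactor in the limit and yields the desired $\H_\mes(x)$.

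The main obstacle is precisely step (iii) for the $SO(2N)$ case: one must ensure that the differential operator $\mathcal{L}_N$ does not destroy the log-asymptotics, which requires upgrading the convergence in Proposition \ref{Proposition_character_asymptotics_k1} from uniform convergence of $\frac{1}{\hat N}\ln f_N$ to uniform convergence of its derivative; this is automatic by Cauchy's integral formula for holomorphic functions, but needs to be stated carefully. A minor additional bookkeeping point is that the prefactor, being holomorphic and equal to $1$ at $x=1$, must be shown to stay away from zero on a (possibly smaller) neighborhood of $1$, so that its logarithm is unambiguously defined; this follows from continuity and the normalization $\chi^{G(N)}_{\lambda(N)}(1^N)/\chi^{G(N)}_{\lambda(N)}(1^N)=1$ at $x=1$.
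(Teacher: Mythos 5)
Your proof follows essentially the same route as the paper: reduce each root system to the unitary case via Propositions \ref{Proposition_Schur_Symplectic_1}--\ref{Proposition_Schur_Ortho_2}, note that the bounded multiplicative prefactors are killed by $\frac{1}{\hat N}\ln(\cdot)$, and handle the differential operator in the $SO(2N)$, $\lambda_N\neq 0$ case by writing the Schur ratio as $e^{2N\H_\mes(x)}T_N(x)$ and differentiating the uniformly convergent remainder. Your explicit check that the folded signature $\nu(N)$ is regular with $m^A[\nu(N)]\to\mes$ (which is exactly why the counting measures $m^{B},m^{C},m^{D}$ are defined the way they are) is a detail the paper leaves implicit, but the argument is the same.
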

\begin{proof}
 For $G(N)=U(N)$ this is Proposition \ref{Proposition_character_asymptotics_k1}. For $G(N)=Sp(2N)$
 we use Proposition \ref{Proposition_Schur_Symplectic_1}. Since the
 $\frac{1}{N}\ln(\frac{2}{x+1})$ vanishes as $N\to\infty$, the result again follows from
 Proposition \ref{Proposition_character_asymptotics_k1}. Similarly for odd orthogonal group
 $SO(2N+1)$ and $\lambda_N=0$ case for
 even orthogonal group $SO(2N)$ we use Propositions \ref{Proposition_Schur_Ortho_1}, \ref{Proposition_Schur_Ortho_2},
 \ref{Proposition_character_asymptotics_k1}. Finally, for even orthogonal group $SO(2N)$ and
 $\lambda_N\ne 0$, using Proposition \ref{Proposition_Schur_Ortho_2} and
$$
 \frac{s_{\nu(N)}(x,1^{2N-1})}{s_{\nu(N)}(1^{2N})}= e^{2N H_{\mes}(x)} T_N(x),
$$
with
\begin{equation}
\label{eq_x38} \lim_{N\to\infty} \frac{1}{N}\ln(T_N(x))=0,
\end{equation}
 we get
 \begin{multline}
 \label{eq_x49}
\frac{\chi^{G(N)}_{\lambda(N)}(x,1^{N-1})}{\chi^{G(N)}_{\lambda(N)}(1^N)}= \left(1+(1-x^{-1})
\frac{x\frac{\partial}{\partial x}-N}{2N-1}\right)\Bigl(e^{2N H_{\mes}(x)}
T_N(x)\Bigr)\\
=\left(1+(1-x^{-1}) \frac{2Nx H'_{\mes}(x)+ x T'_N(x)/T_N(x)-N}{2N-1}\right)\Bigl(e^{2N
H_{\mes}(x)} T_N(x)\Bigr)
\end{multline}
Note that since \eqref{eq_x38} involves uniformly converging analytic functions, we can
differentiate it, which yields that $\lim_{N\to\infty} \frac{1}{N} {T'_N(x)}{T_N(x)}=0$ and, thus,
the term involving $T'_N(x)$ is negligible in \eqref{eq_x49}. Now taking logarithm of
\eqref{eq_x49} and dividing by $2N$, we get $H_{\mes}(x)$, as desired.
\end{proof}

The reduction of general $k$ to $k=1$ is given in the following statement.

\begin{proposition}
 \label{Proposition_character_asymptotics_k}
 Suppose that $\lambda(N)\in \hG (N)$ is a sequence of signatures such that
 \begin{equation}
  \lim_{N\to\infty} \frac{1}{N}
  \ln\left(\frac{\chi^{G(N)}_{\lambda(N)}(x,1^{N-1})}{\chi^{G(N)}_{\lambda(N)}(1^N)}\right)=
  Q(x),
 \end{equation}
 where  the convergence is uniform over a compact set $M\subset \mathbb C$, then for any $k\ge 1$
 \begin{equation}
  \lim_{N\to\infty} \frac{1}{N}
  \ln\left(\frac{\chi^{G(N)}_{\lambda(N)}(u_1,\dots,u_k,1^{N-k})}{\chi^{G(N)}_{\lambda(N)}(1^N)}\right)=
  Q(u_1)+\dots+Q(u_k),
 \end{equation}
 where  the convergence is uniform over $M^k\subset \mathbb C$.
\end{proposition}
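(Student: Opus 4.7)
The plan is to reduce the $k$-variable statement to the $k=1$ case via a finite-$N$ ``multiplicativity'' identity for normalized characters, which expresses $\frac{\chi^{G(N)}_\lambda(u_1,\ldots,u_k,1^{N-k})}{\chi^{G(N)}_\lambda(1^N)}$ as a $k\times k$ determinant whose entries depend only on one of the $u_i$ at a time and are built from the single-variable normalized character (and its derivatives). For $G(N)=U(N)$ such an identity follows from Weyl's formula \eqref{eq_character_A} by specializing $u_{k+1}=\dots=u_N=1$ with the standard l'Hopital procedure, and it is equivalent to the $k$-fold contour integral generalization of \eqref{eq_Schur_integral} used in \cite{GP}; schematically it takes the form
$$
\frac{s_{\lambda}(u_1,\ldots,u_k,1^{N-k})}{s_{\lambda}(1^N)}=\frac{C_{N,k}}{\prod_{i<j}(u_i-u_j)}\,\det_{1\le i,j\le k}\bigl[F_j^{(N,\lambda)}(u_i)\bigr],
$$
where $C_{N,k}$ is an explicit constant of subexponential growth in $N$ and each $F_j^{(N,\lambda)}$ is obtained by acting with a fixed, $j$-dependent differential operator of finite order on the single-variable function $u\mapsto \frac{s_\lambda(u,1^{N-1})}{s_\lambda(1^N)}$. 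For $G(N)\ne U(N)$ I would first apply Propositions \ref{Proposition_Schur_Symplectic_1}--\ref{Proposition_Schur_Ortho_2} to rewrite the normalized $G(N)$-character as a normalized Schur polynomial of rank $2N$ (or $2N\pm 1$) with minor correction factors, and then apply the unitary-case identity.

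Next, I would upgrade the hypothesis from pointwise convergence of $\psi_N(u):=\frac{1}{\hat N}\ln\frac{\chi^{G(N)}_{\lambda(N)}(u,1^{N-1})}{\chi^{G(N)}_{\lambda(N)}(1^N)}$ to convergence of all derivatives. Since $\psi_N$ is holomorphic in a complex neighborhood of $M$ and $\psi_N\to Q$ uniformly there, Cauchy's integral formula gives $\psi_N^{(m)}\to Q^{(m)}$ uniformly on compact subsets of the interior for every $m\ge 0$. Consequently
$$
\frac{\chi^{G(N)}_{\lambda(N)}(u,1^{N-1})}{\chi^{G(N)}_{\lambda(N)}(1^N)}=e^{\hat N Q(u)+o(\hat N)},
$$
and its $m$-th derivative in $u$ equals $\bigl(\hat N Q'(u)\bigr)^m e^{\hat N Q(u)}(1+o(1))$ to leading order in $\hat N$, with the $o(1)$ uniform on compact subsets of the interior of $M$.

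Plugging these asymptotics into the determinantal identity, each entry satisfies $F_j^{(N,\lambda)}(u_i)=\bigl(\hat N Q'(u_i)\bigr)^{j-1}\,e^{\hat N Q(u_i)}(1+o(1))$. Factoring $e^{\hat N Q(u_i)}$ out of the $i$-th row turns the matrix into a Vandermonde in the variables $\hat N Q'(u_1),\ldots,\hat N Q'(u_k)$ (times $1+o(1)$), whose determinant equals $\hat N^{\binom{k}{2}}\prod_{i<j}(Q'(u_j)-Q'(u_i))(1+o(1))$. Taking logarithm, dividing by $\hat N$, and observing that the Vandermonde prefactor $\prod_{i<j}(u_i-u_j)^{-1}$ and the constant $C_{N,k}$ contribute $O(\log\hat N)=o(\hat N)$, we obtain $Q(u_1)+\dots+Q(u_k)$ in the limit, with uniform convergence on compact subsets of $M^k$ following directly from the uniformity of the derivative convergence.

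The main obstacle I anticipate is controlling the subdominant contributions uniformly \emph{near the diagonals} $u_i=u_j$, where the Vandermonde denominator vanishes and the factored-out Vandermonde numerator also vanishes at the same rate. The ratio on the left-hand side is globally holomorphic in $(u_1,\ldots,u_k)$ (the character is a symmetric Laurent polynomial), so the asymptotic expansion obtained on the open dense subset where the $u_i$ are distinct extends to the whole polydisc by Hurwitz/Montel; nevertheless one must check that the implicit error terms in the row-factorization step do not blow up at the diagonals. I would handle this by keeping track of the exact polynomial form of the entries $F_j^{(N,\lambda)}$ and carrying out the row reduction in the determinant \emph{before} passing to the limit, so that the error analysis is performed on a manifestly analytic expression. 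The secondary nuisance is the auxiliary correction factors from Propositions \ref{Proposition_Schur_Symplectic_1}--\ref{Proposition_Schur_Ortho_2} (notably the differential operator $1+(1-x^{-1})\tfrac{x\partial_x-N}{2N-1}$ in \eqref{eq_x37}); as in the proof of Corollary \ref{corollary_character_asymptotics_k1}, these contribute at most $O(\log N)$ to the logarithm and hence vanish after dividing by $\hat N$.
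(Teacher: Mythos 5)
Your overall strategy is the one the paper intends: the paper's proof of this proposition is essentially a citation of the ``approximate multiplicativity'' of normalized characters from \cite{GP} (Corollary 3.11 there for $U(N)$, and its analogues for the other series), and what you write out for the unitary case is precisely the content of that citation --- a finite-$N$ determinantal identity expressing the $k$-variable normalized character through single-variable ones, followed by the observation that the determinant and the Vandermonde prefactors contribute only subexponentially to the logarithm. Modulo the detail that the differential operators in \cite{GP} act on $\frac{s_\lambda(u,1^{N-1})}{s_\lambda(1^N)}(u-1)^{N-1}$ rather than on the normalized character alone, your type-$A$ sketch is the right argument.

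There are, however, two genuine gaps. First, for $G(N)\ne U(N)$ your plan to apply Propositions \ref{Proposition_Schur_Symplectic_1}--\ref{Proposition_Schur_Ortho_2} and then the unitary multivariate identity does not work: those propositions are strictly one-variable statements, valid only when all but one argument is specialized to $1$, and their naive multivariate extensions are false. For instance, for the trivial representation of $Sp(4)$ and $k=2$ one computes $\frac{2}{u_1+1}\cdot\frac{2}{u_2+1}\cdot\frac{s_{(1,1,0,0)}(u_1,u_2,1,1)}{s_{(1,1,0,0)}(1^4)}=1+\frac{(u_1-1)(u_2-1)}{3(u_1+1)(u_2+1)}\ne 1$, so the product of single-variable correction factors does not reduce the two-variable symplectic character to a Schur polynomial. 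This is exactly why the paper invokes the intrinsically multivariate determinantal formulas of \cite{GP} (Theorems 3.17 and 3.21 there), which are built from single-variable normalized characters of the \emph{same} group $G(N)$; with those as input your asymptotic analysis goes through unchanged. Second, the step ``each entry is $(1+o(1))$ times a Vandermonde entry, hence the determinant is $(1+o(1))$ times the Vandermonde determinant'' is not valid wherever the limiting Vandermonde $\prod_{i<j}\bigl(Q'(u_j)-Q'(u_i)\bigr)$ degenerates, and this happens not only on the diagonals $u_i=u_j$ (which you flag and propose to handle) but also at distinct points with $Q'(u_i)=Q'(u_j)$: there an entrywise $(1+o(1))$ gives no lower bound on the determinant, and a lower bound is exactly what the asymptotics of $\frac{1}{\hat N}\ln(\cdot)$ requires. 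One must either carry out the row/column reduction at finite $N$ on the exact analytic expression (as you suggest for the diagonal, but applied to this degeneracy as well) or recover the degenerate points from the non-degenerate locus by a normal-families argument using an a priori subexponential upper bound.
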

\begin{proof}
 For unitary groups this is \cite[Corollary 3.11]{GP} (see also \cite[Theorem 1.7]{GM} and \cite[Theorem 2]{Colins-Sniady}
 for related
 statements in the case of real $u_i$.)
 For symplectic and orthogonal groups (and even for more general multivariate Jacobi polynomials) this is
 proved in the same way as \cite[Corollary 3.11]{GP} using the formulas of \cite[Theorem 3.17]{GP} and
 \cite[Theorem 3.21]{GP}.
\end{proof}

\end{document}